\numberwithin{equation}{section}
\theoremstyle{plain}
\newtheorem{theorem}{Theorem}[section]
\newtheorem{lem}[theorem]{Lemma}
\newtheorem{prop}[theorem]{Proposition}
\newtheorem{cor}[theorem]{Corollary}
\theoremstyle{definition}
\newcommand{\inn}[1]{ \langle #1 \rangle }
\newcommand{\R}{\mathbb{R}}
\newcommand{\N}{\mathbb{N}}
\newcommand{\C}{\mathbb{C}}
\newcommand{\op}{_{L^2 \to L^2}}
\newcommand{\opn}[1]{_{L^2(\R^{{#1}}) \to L^2(\R^{{#1}})}}
\newcommand{\wh}{\widehat}
\newcommand{\om}{{\Omega}}
\newcommand{\cO}{{\mathcal{O}}}
\newcommand{\les}{\lesssim}
\newcommand{\norm}[1]{ || #1 ||}
\newcommand{\ind}{\mathbf{1}}
\newcommand{\sgn}{\operatorname{sgn}}
   \def\MR#1{}
\begin{document}
\title{$L^2$ Bounds for a maximal directional Hilbert transform}

\author{Jongchon Kim}
\address{Department of Mathematics, University of British Columbia, 1984 Mathematics Road, Vancouver, BC, Canada V6T 1Z2}
\email{jongchon.kim.work@gmail.com}

\author{Malabika Pramanik}
\address{Department of Mathematics, University of British Columbia, 1984 Mathematics Road, Vancouver, BC, Canada V6T 1Z2}
\email{malabika@math.ubc.ca}

\begin{abstract}
Given any finite direction set $\Omega$ of cardinality $N$ in Euclidean space, we consider the maximal directional Hilbert transform $H_{\Omega}$ associated to this direction set. Our main result provides an essentially sharp  uniform bound, depending only on $N$, for the $L^2$ operator norm of $H_{\Omega}$ in dimensions 3 and higher. The main ingredients of the proof consist of polynomial partitioning tools from incidence geometry and an almost-orthogonality principle for $H_{\Omega}$. The latter principle can also be used to analyze special direction sets $\Omega$, and derive sharp $L^2$ estimates for the corresponding operator $H_{\Omega}$ that are typically stronger than the uniform $L^2$ bound mentioned above. A number of such examples are discussed. 
\end{abstract}
\date{\today}
\maketitle
{\allowdisplaybreaks 
\section{Introduction}
\noindent Operators associated with sets of directions form a central theme in harmonic analysis. They arise, for instance, in the study of differentiation of integrals, in multiplier problems for the polygon and in Bochner-Riesz means \cite{CorK, CorP, CF, NSW, CorB}. The present article is concerned with a specific directional operator in this class, namely the maximal directional Hilbert transform. 
\vskip0.1in
\noindent Given a nonzero vector $\vec{\omega} \in \mathbb R^{n+1}$, the {\em{directional Hilbert transform}} on $\mathbb R^{n+1}$ {\em{in the direction of $\vec{\omega}$}}  is the operator that maps 
\begin{equation} \label{DHT-single} 
f \longmapsto \text{p.v.} \frac{1}{\pi} \int_{\mathbb R} f(x- t \vec{\omega}) \frac{dt}{t}, \qquad x \in \mathbb R^{n+1}, 
\end{equation} 
where the integral is interpreted in the principal value sense. The transform remains invariant if $\vec{\omega}$ is replaced by any nonzero scalar multiple of it. Without loss of generality and after a permutation of coordinates if necessary, we will think of $\vec{\omega}$ as a vector of the form $\vec{\omega} = \langle v, 1 \rangle$, with $v \in \mathbb R^{n}$. The corresponding operator \eqref{DHT-single} will be denoted by $H_{v}$. 
\vskip0.1in
\noindent Let $\om$ be a set of points in $\R^n$. 
The {\em{maximal directional Hilbert transform}} $H_\om$ associated with the set of directions $\{\langle v,1 \rangle : v \in \Omega  \} \subseteq \mathbb R^{n+1}$ is defined as follows:
\begin{equation} \label{def-MDHT} H_\om f(x) := \sup_{v\in \om} |H_v f(x)|, \qquad x \in \mathbb R^{n+1}.\end{equation}  
By a slight abuse of nomenclature, we will refer to $\Omega$ as the {\em{direction set}} underlying the maximal operator $H_{\Omega}$. For $1 < p < \infty$, it follows from well-known properties of the classical univariate Hilbert transform that for any single vector $v \in \mathbb R^n$, the operator $H_v$ is bounded on $L^p(\R^{n+1})$, with the operator norm uniform in $v$. From this, one concludes easily that $H_\om$ is bounded on $L^p(\R^{n+1})$ if $\om$ is finite. Remarkably, the converse is also true. A collective body of work, pioneered by Karagulyan \cite{Kar} and extended by \L aba, Marinelli and the second author \cite{LMP}, shows that for every $n\geq 1$ and every $1 < p < \infty$, there exists an absolute constant $c = c(p, n) > 0$ such that the operator bound
\begin{equation} \label{maxDHT-lowerbound}  ||H_{\Omega}||_{p \rightarrow p} \geq c \sqrt{\log N} \end{equation}  
holds for {\em{every}} finite direction set $\Omega$ of cardinality $N$. Here $|| H_{\Omega} ||_{p \rightarrow p}$ represents the operator norm of $H_{\Omega}$ from $L^p(\mathbb R^{n+1})$ to itself. The lower bound in \eqref{maxDHT-lowerbound} goes to infinity as $N \rightarrow \infty$, regardless of the structure of $\Omega$. 
\vskip0.1in
\noindent On the other hand, the behaviour of the same operator $H_{\Omega}$ is very different when applied to functions that are localized to a single frequency scale. Lacey and Li \cite{{Lacey-Li-1},{Lacey-Li-2}} have shown that the operator $f \longmapsto H_{\mathbb S^1} (\zeta \ast f)$ maps $L^2(\mathbb R^2)$ to weak $L^2(\mathbb R^2)$, and $L^p(\mathbb R^2)$ to itself for $p > 2$. Here $\zeta$ is a Schwartz function in $\mathbb R^2$ with frequency support in the annulus $\{1 \leq |\xi| \leq 2\}$. The unboundedness phenomenon displayed by $H_{\Omega}$ for infinite $\Omega$ is also in sharp contrast with the behaviour of another closely related operator, the directional maximal function $M_{\om}$, which is known to be $L^p$-bounded for certain infinite direction sets \cite{{Alf}, {Carbery88}, {NSW}, {PR}, {Sjogren-Sjolin}}. Let us recall that for any set $\Omega \subseteq \mathbb R^n$ that could be finite or infinite, 
\begin{align}  M_\om f (x) &:= \sup_{v\in \om} M_v f(x),   \label{max-dir} \text{ where } \\ M_v f(x) &:= \sup_{h>0}\frac{1}{2h}\int_{-h}^h |f(x-\vec{\omega}t)| dt \quad \text{ for }   \vec{\omega} = \langle v,1 \rangle. \nonumber \end{align} 
\vskip0.1in
\noindent The distinctive features of $H_{\Omega}$ have led to several questions of interest. For example, 
\begin{itemize} 
\item {\em{Question 1: }} What is a uniform, and in general sharp, upper bound on the $L^p(\mathbb R^{n+1})$ operator norm of $H_{\Omega}$ that depends only on $\#(\Omega) = N$? 
\vskip0.1in 
\item {\em{Question 2: }} Given a fixed cardinality $N$, under what additional geometric assumptions on $\Omega$ can the above uniform bound be improved? 
\end{itemize} 
These two questions are the primary focus of this article. 
\subsection{Main results} 
\subsubsection{General direction sets in $\mathbb R^n$, $n \geq 2$}
As we will see in section \ref{Literature-review-section} below, question 1 is relatively well-studied in $\mathbb R^2$, i.e., when $n=1$, but is less understood in higher dimensions. 
Our first main result addresses question 1 for $n\geq 2$ and $p=2$. Throughout the paper, we allow all implicit constants to depend on $n$.
\begin{theorem} \label{thm:highd} Let $n\geq 2$. Then for any $\epsilon > 0$, there exists a constant $C_{\epsilon}>0$ depending only on $n$ and $\epsilon$ such that for any finite direction set $\om \subset \R^n$ of cardinality $N$, the following estimate holds: 
\begin{equation}  \norm{ H_\om f}_{L^2(\R^{n+1})} \leq C_\epsilon N^{\frac{n-1}{2n} +\epsilon} \norm{f}_{L^2(\R^{n+1})}. \label{L2-main-bound} \end{equation} 
\end{theorem}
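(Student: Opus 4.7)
The strategy is to combine a polynomial-partitioning decomposition of $\om \subset \R^n$ with an almost-orthogonality principle for $\{H_v : v \in \om\}$, closing the argument by induction on $N$. As a guiding heuristic, the multiplier of $H_v$ equals $-i\,\sgn(\inn{(v,1),\xi})$ and is singular precisely on the hyperplane $(v,1)^\perp \subset \R^{n+1}$. Moving $v$ inside a small region $U \subset \R^n$ alters $H_v f$ substantially only through frequencies of $f$ lying near the corresponding sheaf of hyperplanes, which suggests a joint decomposition of $H_\om f$ in the direction variable $v$ and the Fourier variable $\xi$.

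The first main step is polynomial partitioning of $\om$: for a parameter $D$ to be optimized, select a polynomial $P$ on $\R^n$ of degree at most $D$ so that $\R^n \setminus Z(P)$ is a union of $\les D^n$ open cells $\{C_i\}$ with $\#(\om \cap C_i) \les N/D^n$ for each $i$. This splits $\om = \om_{\text{cell}} \sqcup \om_{\text{var}}$ according to whether a direction lies in $\bigsqcup_i C_i$ or on the hypersurface $Z(P)$.

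Writing $\om_i := \om \cap C_i$, the almost-orthogonality principle should supply an estimate of the form
\[ \norm{H_{\om_{\text{cell}}} f}_{L^2}^2 \les (\log N)^{O(1)} \sum_i \norm{H_{\om_i} f_i}_{L^2}^2, \]
for a suitably chosen, essentially orthogonal family $\{f_i\}$ of frequency localizations of $f$ adapted to the partition. Feeding the inductive hypothesis $\norm{H_{\om_i}}\op \les (N/D^n)^{(n-1)/(2n)+\epsilon'}$ into each cell, summing, and using orthogonality of the $f_i$ yields the desired control of the cellular part. The variety piece $\om_{\text{var}}$ lies on an algebraic hypersurface of dimension $n-1$ and degree $D$; stratifying $Z(P)$ into finitely many smooth pieces of controlled complexity and invoking a lower-dimensional instance of the theorem handles this piece, provided $D$ is chosen to be a small power of $N$.

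The principal technical obstacle is establishing an almost-orthogonality principle that is robust enough to interact with polynomial partitioning: the cells have irregular shape, controlled only by the degree $D$, so the Fourier decomposition of $f$ must be insensitive to this irregularity and yet able to detect when two directions are angularly separated enough that $H_{v_1} f$ and $H_{v_2} f$ become nearly orthogonal in $L^2$. Secondary difficulties include calibrating $D = D(N)$ so that the cellular and variety contributions balance at the exponent $(n-1)/(2n)$, and tracking the factors of $N^\epsilon$ accrued from iterating the argument across many frequency scales and across the induction on $n$.
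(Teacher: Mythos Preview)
Your outline has the right architecture---polynomial partitioning plus almost-orthogonality, with induction on $N$ for the cells and a dimension drop for the variety part---but the form of almost-orthogonality you posit is too strong to be true, and this is not a minor calibration issue: it is where the exponent $(n-1)/(2n)$ actually comes from.

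You propose
\[
\norm{H_{\om_{\text{cell}}} f}_{L^2}^2 \les (\log N)^{O(1)} \sum_i \norm{H_{\om_i} f_i}_{L^2}^2
\]
with an ``essentially orthogonal'' family $\{f_i\}$, meaning $\sum_i \|f_i\|_2^2 \les \|f\|_2^2$ up to logs. If that held, then with $\#\om_i \les N/D^n$ and the induction hypothesis you would get $\|H_{\om_{\text{cell}}}\|\op \les (\log N)^{O(1)} (N/D^n)^{(n-1)/(2n)+\epsilon}$; iterating would collapse the bound to polylogarithmic in $N$, contradicting the known lower bound $\|H_{U_M^n}\|\op \ges N^{(n-1)/(2n)}$. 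So the frequency pieces associated to distinct cells \emph{cannot} be essentially orthogonal.

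What the paper actually proves (Theorem~\ref{thm:ortho}) is
\[
\|H_\om\|\op \leq \|H_{\cO}\|\op + \|E\|_{L^\infty(\mathbb S^n)}^{1/2}\Bigl(\max_j \|H_{\om_j}\|\op + 1\Bigr),
\]
where $\cO$ is a set of one representative per cell and $E(u)$ counts how many cells meet the hyperplane $Z(P_u)$. The frequency pieces $R_{W_j}f$ obey only $\sum_j \|R_{W_j}f\|_2^2 \leq \|E\|_\infty \|f\|_2^2$, and the incidence-geometric bound $\|E\|_\infty \les D^{n-1}$ (a B\'ezout/Milnor--Thom count) is precisely what produces the exponent: one obtains $D^{(n-1)/2}\cdot (N/D^n)^{(n-1)/(2n)+\epsilon} = N^{(n-1)/(2n)+\epsilon} D^{-n\epsilon}$, which closes the induction once $D$ is a large \emph{constant} depending on $\epsilon$. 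Your scheme has no analogue of $E$, hence no mechanism that singles out $(n-1)/(2n)$.

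Two further points. First, the variety piece $\om_{\text{var}} \subset Z(P)$ is not handled by ``a lower-dimensional instance of the theorem'' as stated: Theorem~\ref{thm:highd} concerns arbitrary $\om\subset\R^n$, not $\om$ on a hypersurface. One must formulate and prove the stronger Theorem~\ref{thm:higherdim} for $\om$ in an $m$-dimensional variety of degree $d$, use polynomial partitioning \emph{on varieties} (Theorem~\ref{thm:polyvar}), and induct on $(m,N)$; Theorem~\ref{thm:highd} is then the case $m=n$, $d=1$. Second, because the degree of $V\cap Z_{\C}(P)$ is bounded by $dD$ via B\'ezout, choosing $D$ to be a power of $N$ (as you suggest) would make that degree grow with $N$ and the constants in the $(m-1)$-dimensional step would blow up; the paper instead takes $D=D(\epsilon,m,n,d)$ fixed, so all degrees stay bounded along the dimension induction.
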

\noindent {\em{Remarks: }} 
\begin{enumerate}[1.] 
\item The bound \eqref{L2-main-bound} is sharp, except possibly the factor of $N^\epsilon$. This follows from the work of Joonil Kim \cite{Kim}, who proves the following lower bound when $\Omega$ is the $n$-fold Cartesian product of a uniform direction set: there exists a constant $c > 0$ such that
\begin{equation} \label{def-uniform}
\begin{split}
||H_{\Omega}||_{2 \rightarrow 2} &\geq c N^{\frac{n-1}{2n}}  \quad \text{ for } \Omega = U_{M}^n, \; N = M^n, \text{ where }  \\ 
U_M &= \bigl\{j/M : 1 \leq j \leq M \bigr\}.  
\end{split}
\end{equation}   
\vskip0.1in
\item \label{improvement-3} For $n=2$, i.e., in dimension 3, we are able to improve upon the estimate \eqref{L2-main-bound} by replacing $N^{\epsilon}$ with a slowly increasing function $h(N)$ that goes to infinity as $N \rightarrow \infty$. The implicit constant $C_{\epsilon}$ in \eqref{L2-main-bound} is then replaced by a constant that depends on $h$. The precise statement of this may be found in Theorem \ref{thm:3dgen} in section \ref{Example-section}. In particular, $h$ can be chosen to be the $k$-fold logarithm function for any $k \geq 1$, i.e., $h_k(N) = \log_k N = \log (1 + h_{k-1}(N))$, with $h_0(N) = N$. \label{remark2-thm:highd} 
\end{enumerate} 
\vskip0.1in
\subsubsection{Direction sets in algebraic varieties} We obtain Theorem \ref{thm:highd} as the consequence of a more general result that involves direction sets contained in algebraic varieties. An (affine) \emph{algebraic variety} in $\mathbb C^n$ is the common zero set of finitely many polynomials in $n$ complex variables. In section \ref{sec:poly}, we provide definitions of the dimension and degree of a variety, as well as the relevant facts needed for this article. 
Our main result, Theorem \ref{thm:higherdim} below, provides a uniform bound on the $L^2(\R^{n+1})$-operator norm of $H_\om$ when $\Omega$ is any finite subset of an algebraic variety of prescribed dimension and degree. This partially addresses question 2. 
\vskip0.1in
\noindent Let $\mathcal V(m, n, d)$ denote the collection of all algebraic varieties $V$ in $\mathbb C^n$ of dimension at most $m$ and degree at most $d$. Set 
\[ V(\mathbb R) := V \cap \bigl[ \mathbb R^n + i \{\vec{0}\} \bigr] = \{ x \in \mathbb R^n: x \in V\}. \]  In other words, $V(\mathbb R)$ is the purely real subset of $V$. 
\vskip0.1in 
\begin{theorem}\label{thm:higherdim}
Let $d\in \N$ and $m,n$ be integers such that $n\geq 2$ and $0\leq m \leq n$. For every $\epsilon >0$, there are constants $A_{\epsilon}(m, d) > 0$ such that for any $V \in \mathcal V(m, n, d)$ and any finite direction set $\om  \subset V(\R)$ of cardinality $N$, the following estimates hold:
\begin{equation*}
 ||H_{\Omega}||_{2 \rightarrow 2} \leq 
\begin{cases}
  d &\text{ when } m=0, \\ 
 A_{\epsilon}(m, d) N^{\frac{m-1}{2m} +\epsilon} &\text{ when } 1\leq m \leq n.
\end{cases}
\end{equation*}
\end{theorem}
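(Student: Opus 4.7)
The plan is to argue by induction on the dimension $m$ of the ambient variety, with an inner induction on $N$ at each step. The base case $m=0$ is immediate: a zero-dimensional variety $V\in\mathcal V(0,n,d)$ has at most $d$ points, so $|\Omega|\le d$; the elementary inequality $\sup_{v\in \Omega}|H_v f|^2 \le \sum_{v\in\Omega}|H_v f|^2$ combined with the fact that each $H_v$ is an $L^2$-isometry yields $\|H_\Omega\|_{2\to 2}\le \sqrt d\le d$.

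For the inductive step $m\ge 1$, I would invoke Guth's polynomial partitioning theorem for an algebraic variety: for any integer parameter $D\ge 1$ to be chosen in terms of $m$ and $\epsilon$, there exists a polynomial $P$ of degree at most $D$ that does not vanish identically on $V$ such that
\[
V(\R)\setminus Z(P)=\bigsqcup_{j=1}^{K}\mathcal{C}_j,\qquad K=O_m(D^m),
\]
with every cell $\mathcal{C}_j$ containing at most $O_m(N/D^m)$ points of $\Omega$. Split $\Omega=\Omega_{\text{bdry}}\sqcup\Omega_1\sqcup\cdots\sqcup\Omega_K$, where $\Omega_{\text{bdry}}:=\Omega\cap Z(P)$ lies on $V\cap Z(P)$, an algebraic variety of dimension $\le m-1$ and degree $\le dD$ by Bezout. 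I would then apply the almost-orthogonality principle announced in the abstract (possibly after regrouping the cells into angular classes to meet its hypotheses) to obtain an estimate of the form
\[
\|H_\Omega\|_{2\to 2}\;\lesssim_m\;\|H_{\Omega_{\text{bdry}}}\|_{2\to 2}\;+\;C_m\max_{1\le j\le K}\|H_{\Omega_j}\|_{2\to 2}.
\]
The outer inductive hypothesis controls the boundary term by $A_\epsilon(m-1,dD)\,N^{(m-2)/(2(m-1))+\epsilon}$ (replaced by $\sqrt{dD}$ when $m=1$), which is strictly of lower order in $N$ than the target since $\tfrac{m-2}{2(m-1)}<\tfrac{m-1}{2m}$ for $m\ge 2$. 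The inner inductive hypothesis on $N$ controls each cell term by $A_\epsilon(m,d)(C_mN/D^m)^{(m-1)/(2m)+\epsilon}$, so the aggregate cell contribution is $\lesssim_m A_\epsilon(m,d)\,D^{-(m-1)/2-m\epsilon}\,N^{(m-1)/(2m)+\epsilon}$. Choosing $D=D(m,\epsilon)$ large enough that the $D$-factor is $\le 1/2$ allows this to be absorbed, and taking $A_\epsilon(m,d)$ sufficiently large (using the trivial bound $\|H_\Omega\|_{2\to 2}\le\sqrt N$ to seed the small-$N$ cases) closes the induction.

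The hard part, and where I expect the paper's real innovation to lie, is verifying the almost-orthogonality principle in a form compatible with the polynomial partition. Guth's cells live in direction space $\R^n$ and are not angular sectors, yet an almost-orthogonality statement for $H_\Omega$ should be built on some form of angular or frequency separation in $\R^{n+1}$. Bridging the two -- presumably via an auxiliary frequency partition of unity subordinate to cones dual to the cells, together with estimates controlling the contribution of each $H_v$ to frequency regions not aligned with $v$'s cell -- is the technical crux. A secondary concern is bookkeeping: the degree inflates as $d\mapsto dD$ at each descent in $m$, so the recursion defining $A_\epsilon(m,d)$ must be set up carefully to prevent the constants from blowing up after $m$ steps.
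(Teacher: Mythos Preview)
Your overall architecture is right: induction on $m$ with an inner induction on $N$, polynomial partitioning on the variety, the boundary piece $\Omega\cap Z(P)$ descending to a variety of dimension $\le m-1$ and degree $\le dD$ via Bezout, and the cell pieces handled by the inductive hypothesis on $N$. The paper does exactly this, using the Matou\v{s}ek--Pat\'{a}kov\'{a} version of polynomial partitioning on varieties (not Guth's, which is the $\R^n$ case).

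The genuine gap is the shape of the almost-orthogonality principle you postulate. The inequality
\[
\|H_\Omega\|_{2\to 2}\;\lesssim_m\;\|H_{\Omega_{\text{bdry}}}\|_{2\to 2}\;+\;C_m\max_{j}\|H_{\Omega_j}\|_{2\to 2}
\]
is too strong: with a constant $C_m$ independent of $D$, your cell arithmetic produces a gain of $D^{-(m-1)/2-m\epsilon}$, which would let you take $\epsilon=0$ and remove the $N^\epsilon$ loss altogether. The paper's actual principle (its Theorem~1.4) applies to \emph{any} covering by connected sets and reads
\[
\|H_\Omega\|_{2\to 2}\;\le\;\|H_{\mathcal O}\|_{2\to 2}\;+\;\|E\|_{L^\infty(\mathbb S^n)}^{1/2}\Bigl(\max_j\|H_{\Omega_j}\|_{2\to 2}+1\Bigr),
\]
where $\mathcal O$ is a set of one representative per cell (not the boundary set), and $E(u)$ counts how many cells meet the hyperplane $\{y:\langle y,1\rangle\cdot u=0\}$. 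The boundary contribution enters separately, via the crude subadditivity $\|H_\Omega\|\le\|H_{\Omega\setminus Z(P)}\|+\|H_{\Omega\cap Z(P)}\|$. For the cells of $V(\R)\setminus Z(P)$ one has $\|E\|_\infty\lesssim_{n,d} D^{m-1}$ (this is a Milnor--Thom/Barone--Basu type bound applied to $V\cap Z(P_u)$), so the factor in front of $\max_j\|H_{\Omega_j}\|$ is of order $D^{(m-1)/2}$, not $O_m(1)$. This exactly cancels the $D^{-(m-1)/2}$ you found and leaves only $D^{-m\epsilon}$ as the gain---which is why the $N^\epsilon$ is present in the theorem. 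The term $\|H_{\mathcal O}\|$ is handled by the trivial bound $\#\mathcal O\lesssim_{n,d}D^m$ and absorbed into $A_\epsilon(m,d)$.

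Your speculation about the mechanism---angular sectors, dual cones, auxiliary frequency partitions---is also off the mark. The principle is much simpler and does not need the cells to be angular: it rests only on the Fourier support of $H_v f-H_{v'}f$ lying in the union of hyperplanes $\langle w,1\rangle^\perp$ for $w$ on the segment $[v,v']$, together with an $\ell^2$ bound on $\sum_j\mathbf 1_{W_j}$ where $W_j$ is the corresponding frequency region attached to cell $j$. The connectedness of the cells is all that is used, and the overlap function $\sum_j\mathbf 1_{W_j}$ is controlled pointwise by $E(u)$.
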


\noindent {\em{Remarks: }} 
\begin{enumerate}[1.]
\item \label{higherdim-implies-highd} The $n$-dimensional complex Euclidean space $\mathbb C^n$ is itself a variety of dimension $n$ and degree 1. Thus Theorem \ref{thm:highd} is a special case of Theorem \ref{thm:higherdim} with $m=n$ and $d=1$ for $V = \mathbb C^n$. 
\vskip0.1in
\item The given bound is trivial for $m=0$; it is a consequence of the fact that the degree of a zero dimensional variety $V$ coincides with its cardinality. 
\vskip0.1in
\item In contrast with the definition in section \ref{sec:poly}, certain texts (see for example \cite{Sheffer}) define the degree of an algebraic variety $V$ as the smallest integer $D$ such that $V$ can be represented as the common zero set of finitely many complex polynomials of degree at most $D$. While these two notions are not identical, each controls the other, as shown in Lemmas 4.2 and 4.3 of \cite{ST}. Let us define $\mathcal V'(m, n, D)$  as the class of all $m$-dimensional varieties in $\C^n$ that can be wriitten as the common zero set of finitely many polynomials of degree at most $D$. Then a statement similar to Theorem \ref{thm:higherdim} remains valid with $\mathcal V(m,n,d)$ replaced by $\mathcal V'(m,n,D)$. A small modification is necessary for $m=0$, where $d$ is replaced by $D^n$.

\vskip0.1in
\item The estimate in Theorem \ref{thm:higherdim} does not quantify the dependence on $m$ and $d$, but is sharp in $N$ for every $1 \leq m \leq n$, except possibly the factor of $N^\epsilon$. This follows by choosing $V = \mathbb C^m \times \{0\}$, for which $V(\mathbb R) = \mathbb R^m \times \{0\} \subseteq \mathbb R^n$, and setting $\Omega = U_M^m \times \{0\}$, with $N = M^m$ and $U_M$ as in \eqref{def-uniform}. It then follows from a standard slicing argument (see Lemma \ref{lem:slice} in the appendix) that $||H_{\Omega}||_{2 \rightarrow 2} \geq c N^{(m-1)/(2m)}$. It would be of interest to eliminate the factor $N^{\epsilon}$ and to quantify the dependence of the implicit constant on the degree $d$. We make partial progress on this issue for $n=2$, as shown in Theorem \ref{thm:3d}. This leads to the improvement of Theorem \ref{thm:highd} in dimension 3 discussed earlier (in remark \ref{improvement-3} following Theorem \ref{thm:highd}).
\end{enumerate}

\subsubsection{An almost orthogonality principle} A crucial ingredient of Theorem \ref{thm:higherdim} is an almost-orthogonality principle for $H_\om$, which may be of independent interest. Indeed all the new results in this paper (including those in sections \ref{Example-section} and \ref{Example-section-2}) depend on it. We state the result below after setting up the relevant notation. 
\vskip0.1in
\noindent Let $\mathbb{O} =\{ O_j \}$ be a finite collection of non-empty sets in $\mathbb R^n$, often called ``cells". For each unit vector $u \in \mathbb S^{n} \subseteq \mathbb R^{n+1}$, we define $E_{\mathbb{O}}(u)$ to be the number of cells $O_j \in \mathbb{O}$ that intersect the hyperplane $Z(P_u)=\{y\in \R^{n} : P_u(y)=0\}$, where 
\begin{equation} \label{def-Pu}
P_u(y) := u \cdot \inn{y,1}.
\end{equation} 

\begin{theorem} \label{thm:ortho}
Let $\om$ be any finite set in $\R^n$, $n \geq 1$. Given a finite collection of non-empty connected sets
$\mathbb{O}=\{ O_j \}$ in $\mathbb R^n$ covering $\om$, we set \[\om_j := \om \cap O_j, \quad \text{ so that }  \quad \om = \bigcup_j \om_j .\] For each $j$, we fix an element $v_j\in O_j$ and denote by $\cO$ the collection of chosen points $v_j$. Then with $E(u)=E_{\mathbb{O}}(u)$, the following estimate holds: 
\begin{equation} \label{almost-ortho} 
\norm{H_\om}_{2 \rightarrow 2} \leq \norm{H_\cO}_{2 \rightarrow 2} + \norm{E}_{L^\infty(\mathbb S^{n})}^{1/2}  \left(\max_j \norm{H_{\om_j}}_{2 \rightarrow 2}+1 \right). \end{equation} 
\end{theorem}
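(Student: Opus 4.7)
My plan is a Littlewood--Paley style argument in which the frequency decomposition is dictated by the cell structure $\mathbb{O}$. First I would decompose via the pointwise triangle inequality applied cell-by-cell,
\[
H_\om f(x) \le H_\cO f(x) + Gf(x), \qquad Gf(x) := \sup_j \sup_{v \in \om_j} |(H_v - H_{v_j})f(x)|,
\]
reducing matters to showing $\|Gf\|_2 \le \|E\|_{L^\infty(\mathbb S^n)}^{1/2}(M+1)\|f\|_2$, with $M := \max_j \|H_{\om_j}\|\op$.

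The central observation will exploit the Fourier multiplier $-i\,\sgn(\inn{v,1}\cdot\xi)$ of $H_v$. If $v, v_j$ both lie in the connected set $O_j$ and $O_j$ does not meet the hyperplane $Z(P_u)$ for $u=\xi/|\xi|$, then the continuous function $y \mapsto u\cdot\inn{y,1}$ is nowhere zero on the connected set $O_j$, hence of constant sign, so the multiplier of $H_v - H_{v_j}$ vanishes at $\xi$. Consequently, if I define $T_j$ to be the $L^2$-contractive Fourier projection with symbol $\chi_{W_j}$, where
\[
W_j := \{\xi \in \R^{n+1}\setminus\{0\} : O_j \cap Z(P_{\xi/|\xi|}) \neq \emptyset\},
\]
then $(H_v - H_{v_j})f = (H_v - H_{v_j})T_j f$ for every $v \in \om_j$. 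A further triangle inequality will then yield the pointwise bound
\[
\sup_{v \in \om_j}|(H_v - H_{v_j})f(x)| \le H_{\om_j}(T_j f)(x) + |H_{v_j}(T_j f)(x)|,
\]
whence $\bigl\|\sup_{v \in \om_j}|(H_v - H_{v_j})f|\bigr\|_2 \le (M+1)\|T_j f\|_2$, using $\|H_{v_j}\|\op = 1$.

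To combine the cells, I will use the crude inequality $\sup_j a_j \le \bigl(\sum_j a_j^2\bigr)^{1/2}$, which together with Plancherel and the identity $\sum_j \chi_{W_j}(\xi) = E(\xi/|\xi|)$ will give
\[
\|Gf\|_2^2 \le \sum_j \Big\|\sup_{v \in \om_j}|(H_v - H_{v_j})f|\Big\|_2^2 \le (M+1)^2 \int |\wh f(\xi)|^2 E(\xi/|\xi|)\,d\xi \le (M+1)^2 \|E\|_{L^\infty(\mathbb S^n)}\|f\|_2^2.
\]
Substituting into the initial decomposition would then produce \eqref{almost-ortho}.

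The main obstacle I anticipate is the frequency-localization step of the second paragraph: the connectedness hypothesis on the $O_j$ is used there in an essential way, and establishing that the multiplier of $H_v - H_{v_j}$ is supported in $W_j$ (with $W_j$ measurable, which is automatic for the semi-algebraic cells arising in applications) is the one non-mechanical ingredient. This is precisely what permits the cheap $\ell^2$-over-$j$ sum to succeed without losing a factor of the total cell count. The remaining steps are routine Plancherel bookkeeping; no smoothness of the multipliers $\chi_{W_j}$ beyond $L^\infty$ control is required, since the argument never leaves $L^2$.
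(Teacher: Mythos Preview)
Your proposal is correct and follows essentially the same approach as the paper: the pointwise decomposition $H_\om f \le H_\cO f + Gf$, the Fourier localization of $(H_v - H_{v_j})f$ via connectedness of $O_j$, and the $\ell^2$-over-cells Plancherel argument are exactly the ingredients in the paper's Section~\ref{sec:ortho}. The only cosmetic difference is that the paper first localizes the multiplier of $H_v - H_{v_j}$ to the smaller set $\bigcup_{w \in \mathfrak L(v,v_j)} \vec{w}^\perp$ (Lemma~\ref{lem:supp}) and then shows via connectedness (Lemma~\ref{lem:inters}) that this set is contained in your $W_j$, yielding $\sum_j \mathbf 1_{W_j} \le E$ rather than your identity $\sum_j \chi_{W_j} = E$; your route merges these two steps and is marginally more direct.
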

\vskip0.1in
\noindent {\em{Remarks: }} 
\begin{enumerate}[1.]
\item The assumption that the set $O_j$ is connected is used in the proof only in the following way; for every $u\in \mathbb S^{n}$, if $P_u(x) \neq 0$ for every $x\in O_j$, then either $P_u(x) >0 $ for every $x\in O_j$ or $P_u(x) < 0 $ for every $x\in O_j$. 

\vskip0.1in
\item Various versions of almost orthogonality have been used to study $H_\om$, although not in the generality of Theorem \ref{thm:ortho}. In particular, Theorem \ref{thm:ortho} is inspired by the work of Joonil Kim \cite{Kim}, where he uses an inductive argument based on the Fourier localization of the difference $H_v f - H_{v'}f$ to obtain sharp bounds on $\norm{H_\om}_{2 \rightarrow 2}$ for direction sets $\om$ given by Cartesian products; see \eqref{eqn:kimprod} below. See also \cite[Theorem 5.1]{DP2} for a version of the almost orthogonality principle with a fixed choice of the cells $\{ O_j\}$ in $\R^3$.

\vskip0.1in
\item \label{remark-n1} The statement of Theorem \ref{thm:ortho} is particularly simple when $n=1$. In this case, the cells $\{O_j\}$ can be chosen as disjoint intervals covering $\om\subset \R$, and $Z(P_u)$ contains at most a single point, so that $\norm{E}_{L^\infty(\mathbb S^{1})} \leq 1$ trivially. Thus for $n=1$, we have 
\begin{equation}  \norm{H_\om}\op \leq \norm{H_\cO}\op + \max_j \norm{H_{\om_j}}\op+1. \label{almost-ortho-n1} \end{equation}  
\vskip0.1in
\item Almost orthogonality estimates similar to \eqref{almost-ortho-n1} have historically played an important role in obtaining bounds for other directional maximal operators, such as $M_{\Omega}$ defined in \eqref{max-dir}; see \cite{{Alf},{ASV2},{ASV}, {PR}}. For example, in \cite{{ASV2}, {ASV}}, the authors derive an almost orthogonality principle for $M_{\Omega}$ in $L^2$, and use it to give a simple proof of the estimate 
\[ \norm{M_\om}_{L^2(\R^2) \to L^2(\R^2)} \leq C \log N \quad \text{ for any $\Omega \subset \mathbb R$ with $\#(\Omega) = N$,} \]
originally due to Katz \cite{Katz}. In \cite{Alf}, Alfonseca proves yet another orthogonality principle for $M_{\Omega}$ in $L^p$ that can be applied in a variety of contexts. In particular, it is used to reprove $L^p(\mathbb R^2)$ bounds, originally shown by Sj\"ogren and Sj\"olin \cite{Sjogren-Sjolin}, for $M_{\Omega}$ where $\Omega$ is a (possibly infinite) lacunary set of finite order. A similar $L^p(\mathbb R^n)$ orthogonality estimate for $n \geq 2$ appears in \cite[Theorem A]{PR}.   

\vskip0.1in
\item Theorem \ref{thm:ortho} permits a range of applications. In addition to proving Theorem \ref{thm:higherdim}, it provides simpler proofs for certain known bounds on $H_{\Omega}$, in some cases with small improvements. A few such applications have been discussed in Section \ref{Example-section}. More interestingly, Theorem \ref{thm:ortho} can be used to obtain new and sharp bounds on $H_{\Omega}$ that are stronger than the general bound \eqref{L2-main-bound}, for direction sets $\Omega$ with special algebraic or geometric properties. This turns out to be the case, for example, when $\Omega$ is given by points on an algebraic variety as in Theorem \ref{thm:higherdim}, or if $\Omega$ is of product type; see Theorem \ref{thm:prod}. A number of such applications have been discussed in section \ref{Example-section-2}. 
\end{enumerate} 


\subsection{Literature review} \label{Literature-review-section} We give a brief survey of some earlier results to place ours in context. In $\mathbb R^2$, i.e., for the case $n=1$, it is known that there exists an absolute constant $C > 0$ such that 
\begin{equation}\label{eqn:2d}
\norm{H_\om}_{L^2(\R^2) \to L^2(\R^2)} \leq C \log N
\end{equation}
for any direction set $\om$ of cardinality $N$. This estimate can be traced back to the work of Christ, Duoandikoetxea and Rubio de Francia; it follows, for example, from their paper \cite[Theorem 2]{CDR}, by setting $n = 2$ and $\Gamma = \mathbb S^1$. Alternative proofs may be found in \cite{Kar, Kim}. The bound in \eqref{eqn:2d} is optimal and is attained for the uniform direction set $U_N$ given by \eqref{def-uniform}, see \cite{Kim}.  
The estimate \eqref{eqn:2d} was extended to maximal directional singular integrals in \cite{Dem0} and to $L^p$ estimates for $p>2$ in \cite{DD}. 
\vskip0.1in
\noindent We turn now to special direction sets $\Omega$. For lacunary direction sets such as $ \om = \{ 2^{-k} : 1\leq k\leq N \} \subseteq \mathbb R$, it is known that 
\begin{equation}\label{eqn:lac}
c \sqrt{\log N} \leq  \norm{H_\om}_{L^p(\R^2) \to L^p(\R^2)} \leq C \sqrt{\log N}
\end{equation}
for all $1<p<\infty$. The upper bound in \eqref{eqn:lac} is due to Demeter and Di Plinio \cite{DD}. See also \cite{Dem, DD, DP} for generalizations of these results to directional singular integral operators and to finite order lacunary directions, respectively. As mentioned earlier in \eqref{maxDHT-lowerbound}, the lower bound in \eqref{eqn:lac} has been shown to hold for any direction set $\Omega$ in $\mathbb R^n$ with $N$ elements \cite{{Kar}, {LMP}}. 
\vskip0.1in
\noindent In dimensions $n\geq 2$, the bound
\[ \norm{H_\om}_{L^2(\R^{n+1}) \to L^2(\R^{n+1})} \leq C D\log N \]
was obtained in \cite[Theorem 2]{CDR} with an absolute positive constant $C$ for direction sets $\om$ contained in a curve in $\R^n$ which crosses every hyperplane at most $D$ times. A set $\Omega$ of this form is a subset of one-parameter family of directions, with the single parameter ranging over the curve. In contrast, Joonil Kim \cite{Kim} considers direction sets that may be viewed as genuinely ``$n$-dimensional''. For direction sets given by Cartesian products $\om=\om_1 \times \cdots \times \om_n$, with $\om_j \subset \R$ and $\#(\Omega_j) = N_1$ for all $j$,  \cite{Kim} establishes the following estimate:  
\begin{equation}\label{eqn:kimprod}
\norm{H_\om}\opn{n+1} \leq C {N_1}^{\frac{n-1}{2}} = C N^{\frac{n-1}{2n}}, 
\end{equation}
where $\#(\Omega) = N = N_1^n$. The article \cite{Kim} also shows that the bound \eqref{eqn:kimprod} is sharp for a specific member of this class, namely $\Omega = U_{N_1}^n$. Here $U_{N_1}$ refers to the uniform direction set defined in \eqref{def-uniform}. Incidentally, these direction sets of product type offer examples in support of the sharpness of \eqref{L2-main-bound}, as alluded to after the statement of Theorem \ref{thm:highd}. See also \cite{DP2, ADP} for sharp estimates of $H_{\Omega}$ in $\mathbb R^n$ for direction sets $\Omega$ that are ``finite order lacunary''.
\vskip0.1in
\noindent Recently, other geometric variants of the maximal functions $M_\om$ and $H_\om$ have been considered. For example, the articles \cite{GRSY1, GRSY2} provide $L^p$ estimates for maximal functions associated with families of homogeneous curves in $\R^2$.

\subsection{Overview of the proof} There are two main ingredients in the proof of Theorem \ref{thm:higherdim}. The first is the almost-orthogonality principle for $H_\om$, namely Theorem \ref{thm:ortho} mentioned previously, which we obtain using the square function argument from \cite{Kim}. 
The second main ingredient is polynomial partitioning, introduced by Guth and Katz \cite{GK}; see Theorem \ref{thm:GK}. 
We refer the interested reader to \cite{Guth} for a treatise on the subject, and also to the seminal papers \cite{Guth1, Guth2} for applications of polynomial partitioning to the Fourier restriction problem. 
\vskip0.1in
\noindent We briefly sketch the proof of Theorem \ref{thm:highd}, which is 
Theorem \ref{thm:higherdim} for $V = \C^n$. In this setting, the direction set $\Omega\subset \R^n$ is finite, but otherwise entirely arbitrary. 
In the absence of any structural assumptions on $\Omega$ and with the goal of applying Theorem \ref{thm:ortho}, we  choose the sets $O_j$ as the connected components of $\mathbb R^n \setminus Z(P)$, where $P$ is a partitioning polynomial. This splits the argument into two parts. The contribution from $\Omega \setminus Z(P) = \cup_j (\Omega \cap O_j)$
 admits an inductive treatment based on cardinality, since each set $\Omega \cap O_j$ contains fewer elements of $\Omega$. The contribution from $\Omega \cap Z(P)$ is treated differently. This is a subset of the zero set of the partitioning polynomial, and hence has additional structural properties; for instance, as an algebraic variety, $Z(P)$ is of dimension strictly lower than the ambient dimension $n$. To study $\Omega \cap Z(P)$, we appeal to more sophisticated polynomial partitioning for finite subsets of algebraic varieties, in particular, Theorem \ref{thm:polyvar} due to Matou\v{s}ek and Pat\'{a}kov\'{a} \cite{MP}. This opens up an inductive strategy for handling $\Omega \cap Z(P)$, based on the dimension of the ambient algebraic variety (in this case $Z(P)$). 
This approach leads naturally to the consideration of direction sets contained in algebraic varieties of a given dimension, and explains the need for Theorem \ref{thm:higherdim}. 



\vskip0.1in 
\noindent Besides the papers \cite{ASV2,Kim} discussed earlier, our work was inspired by the recent results of Di Plinio and Parissis \cite{DPavg}, where sharp $L^2$-estimates were obtained for a maximal directional averaging operator 
using polynomial methods. Interestingly, in \cite{DPavg} the authors develop and use their own variant of polynomial partitioning adapted to the problem. It turns out that, for the study of $H_\om$, it is sufficient to use polynomial partitioning tools available in the literature, specifically in \cite{GK, MP, BB, ST}. Some additional technical difficulties which exist in \cite{DPavg} have been avoided in this paper due to the availability of Theorem \ref{thm:ortho}. This theorem is based on the strong Fourier localization of the difference $H_v f - H_{v'}f$ (see Lemma \ref{lem:supp}).  We are not aware of an analogous result that exists in general dimensions for directional maximal functions. 
\subsection{Layout of the paper} In addition to Theorems \ref{thm:highd} and \ref{thm:ortho} stated in this introduction, this paper contains a number of new results pertaining to special direction sets $\Omega$. Most of them have been relegated to sections \ref{Example-section} and \ref{Example-section-2}. We take this opportunity to highlight their content and location, and describe the general organization of this paper. 
\vskip0.1in
\noindent  In section \ref{Example-section}, and as a warm-up for the main theorems, we discuss a number of applications of Theorem \ref{thm:ortho} that lead to new proofs of existing results. Section \ref{Example-section-2} is devoted to more nontrivial applications, where we obtain sharp estimates on $||H_{\Omega}||_{2 \rightarrow 2}$ for certain direction sets $\Omega$. In particular, we consider general product sets (Theorem \ref{thm:prod}) which lead to an extension of \eqref{eqn:kimprod}, and direction sets in $\mathbb R^2$ contained in the zero set of a bivariate polynomial (Theorem \ref{thm:3d}). As an application of the former and given any prescribed growth rate, we construct direction sets $\Omega$ for which $||H_{\Omega}||_{2 \rightarrow 2}$ obeys that growth rate; see Theorems \ref{thm:growth-alpha} and Corollary \ref{prescribed-growth-cor}. 
This section also contains Theorem \ref{thm:3dgen}, a refined version of \eqref{L2-main-bound} in $\mathbb R^3$ that was mentioned in remark \ref{remark2-thm:highd} following Theorem \ref{thm:highd}. 
\vskip0.1in
\noindent The remainder of the paper is devoted to proofs. In section \ref{sec:ortho}, we prove the almost orthogonality principle Theorem \ref{thm:ortho}, which is a key ingredient in all the other proofs in this paper. The subsequent sections are given over to proving the applications stated in section \ref{Example-section-2}. For instance, in sections \ref{sec:prod} and  \ref{proof:thm3d:section}, we prove Theorem \ref{thm:prod} and Theorem \ref{thm:3d}, respectively. This in turn leads to the proof of Theorem \ref{thm:3dgen}, which appears in section \ref{sec:3d}.  Polynomial partitioning tools needed for the proof of our main result, Theorem \ref{thm:higherdim}, are gathered in section \ref{sec:poly}. 
The proof of the theorem itself has been executed in Section
 \ref{section:proof:higherdim}. Appendix \ref{sec:appendix} contains a few auxiliary lemmas needed in various sections.

\subsection{Acknowledgements} This work was completed while the first author was a joint postdoctoral fellow at the Pacific Institute of Mathematical Sciences and the department of mathematics at the University of British Columbia. He would like to thank Joshua Zahl for pointing out the references \cite{MP, Mil, Za}. The second author thanks the Peter Wall Institute of Advanced Studies for its support in the form of a 2018-2019 Wall Scholarship that facilitated the project. Both authors were partially supported by a Discovery grant from the Natural Sciences and Engineering Research Council of Canada.    

\section{Examples and applications: Part 1} \label{Example-section} 
\noindent As mentioned in the introduction, Theorem \ref{thm:ortho} can be applied directly to certain direction sets $\Omega$ that have been studied in the literature, to yield new proofs of existing results concerning $H_{\Omega}$, in some cases with optimal bounds. This section is given over to a discussion of such applications, as preparation for the core ideas that appear in more refined form in the proofs of our main results.  
\subsection{Direction sets given by points on a curve} \label{points-on-curve-section} Given $n \geq 2$ and a fixed integer $D$, let $\mathcal G_D$ denote the class of continuous curves $\Gamma: I \rightarrow \R^n$ for an interval $I\subset \R$ such that 
\begin{itemize} 
\item $\Gamma$ has no self-intersections, i.e., $\Gamma(s) \ne \Gamma(t)$ for $s \ne t$, and 
\item $\Gamma$ has no more than $D$ intersections with most hyperplanes. More precisely, for Lebesgue almost every $u \in \mathbb S^n$, the hyperplane $Z(P_u) = \{y \in \mathbb R^n : u \cdot \langle y, 1 \rangle = 0 \}$ intersects $\Gamma$ at most $D$ times. 
\end{itemize} 
Let us define 
\begin{equation}  \mathfrak C(N, D; n) := \sup \left\{ \norm{H_\om}_{2 \rightarrow 2} \Bigl| \begin{aligned}  &\exists \Gamma \in \mathcal G_D \text{ such that } \Omega \subseteq \Gamma(\mathbb R) \\ &\text{ and }  \#(\Omega) \leq N \end{aligned}  \right\}. \label{CND} \end{equation} 
This type of ``one-dimensional" direction set appears in \cite{CDR}, where the authors prove a bound of the form $\mathfrak C(N, D;n) \les D \log N$.  We give a different proof of this result with a small improvement, which incidentally is also optimal. 
\begin{theorem} \label{CDR-reproof}  Let $\mathfrak C(N, D;n)$ be as in \eqref{CND}. Then there exists an absolute positive constant $C > 0$ such that for all $n, D \geq 1$, 
\begin{equation}\label{eqn:curve}
\mathfrak C(N, D;n) \leq C \sqrt{D} \log N.
\end{equation}  
\end{theorem}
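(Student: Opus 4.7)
The strategy is to apply the almost-orthogonality principle (Theorem \ref{thm:ortho}) with cells equal to short sub-arcs of $\Gamma$, and then to iterate the resulting recursion on the cardinality $N$ of $\Omega$. The curve hypothesis on $\Gamma$ enters only through the bound on the crossing function $E(u)$, while the small individual cells cost essentially nothing.

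First, fix $\Gamma \in \mathcal{G}_D$ and order $\Omega = \{\Gamma(s_1), \ldots, \Gamma(s_N)\}$ with $s_1 < \cdots < s_N$; such an ordering is available because $\Gamma$ has no self-intersections. For $1 \leq j \leq \lfloor N/2 \rfloor$ set $O_j := \Gamma([s_{2j-1}, s_{2j}])$, and if $N$ is odd let $O_{\lceil N/2 \rceil} := \{\Gamma(s_N)\}$. Each $O_j$ is connected as the continuous image of an interval, and $\Omega_j := \Omega \cap O_j$ has at most two elements. Pick a representative $v_j \in \Omega_j$ from each cell; the collection $\mathcal{O}$ of these $\lceil N/2 \rceil$ points again lies on $\Gamma(I)$, and hence is itself an admissible direction set for $\mathfrak{C}(\lceil N/2 \rceil, D; n)$.

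Next, control the crossing function. If $Z(P_u)$ meets $\Gamma$ in at most $D$ points, then at most $D$ of the cells $O_j$ can contain a point of $Z(P_u)$, so $E(u) \leq D$. This holds for a.e. $u \in \mathbb{S}^n$ by the defining property of $\mathcal{G}_D$, and hence $\|E\|_{L^\infty(\mathbb{S}^n)} \leq D$. Since $|\Omega_j| \leq 2$, the trivial bound $\|H_{\Omega_j}\|_{2\to 2} \leq 2$ follows from sub-additivity and the uniform $L^2$-boundedness of a single directional Hilbert transform. Inserting these estimates into \eqref{almost-ortho} yields the recursion
\[
 \mathfrak{C}(N, D; n) \;\leq\; \mathfrak{C}(\lceil N/2 \rceil, D; n) \,+\, 3\sqrt{D}.
\]
Iterating this $O(\log N)$ times with base case $\mathfrak{C}(1, D; n) \leq 1$ produces $\mathfrak{C}(N, D; n) \leq C\sqrt{D}\log N$, which is \eqref{eqn:curve}.

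The main conceptual hurdle is picking the cells so that the recursion has coefficient $1$ in front of the recursive term. If instead one bisects $\Omega$ into two large arcs, the $\sqrt{E}$ factor multiplies the recursive quantity, and the cascade yields only a polynomial-in-$N$ bound. The correct move is to use roughly $N/2$ tiny cells, placing the recursion on $\mathcal{O}$ and absorbing $\sqrt{D}$ into a bounded additive tail; this echoes the philosophy of Alfonseca-type arguments for directional maximal functions. Apart from this design choice, the remaining ingredients (connectivity of arcs, the incidence count, and the trivial small-cardinality estimate for $H_{\Omega_j}$) are essentially immediate from the hypotheses.
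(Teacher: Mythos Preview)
Your proof is correct and follows essentially the same route as the paper's: both pair off consecutive points of $\Omega$ along $\Gamma$ to form $\lceil N/2\rceil$ two-point arcs, bound $\|E\|_{L^\infty}\leq D$ via the hyperplane--curve intersection hypothesis, invoke Theorem~\ref{thm:ortho} with $\|H_{\Omega_j}\|_{2\to 2}\leq 2$, and iterate the recursion $\mathfrak C(N,D;n)\leq \mathfrak C(\lceil N/2\rceil,D;n)+3\sqrt{D}$. The only cosmetic difference is that the paper assumes $N$ is a power of $2$ at the outset rather than tracking $\lceil N/2\rceil$.
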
 
\begin{proof} 
Without loss of generality, we may assume that $N$ is a power of 2. Let us fix a curve $\Gamma \in \mathcal G_D$, $\Gamma: I \rightarrow \mathbb R^n$ and a direction set $\Omega$ of cardinality $N$, which we may write \[ \Omega = \{ \Gamma(t_k) :  1 \leq k \leq N \} \subset \Gamma, \quad \text{ with }  \quad t_1 < t_2 < \cdots < t_N. \] 
\noindent For an application of Theorem \ref{thm:ortho}, we cover $\om$ by $N/2$ disjoint connected sets $O_j = \Gamma([t_{2j-1},t_{2j}])$, 
$1 \leq j \leq N/2$. In the notation of Theorem \ref{thm:ortho}, the set $\Omega_j = O_j \cap \Omega$ consists of two elements of $\om$. Therefore the $L^2$-operator norm of $H_{\om_j}$ is at most 2. Selecting a point from each $O_j$ leads us to a set $\cO$ consisting of $N/2$ points on $\Gamma$. By the assumption $\Gamma \in \mathcal G_D$, we also have that 
\begin{align*}  E(u) &:= \# \bigl\{j : O_j \cap Z(P_u) \ne \emptyset \bigr\} \\ &\leq \#\;\; \text{intersections between $\Gamma$ and $Z(P_u)$, which is} \\ &\leq D, \quad   \text{ for almost every } u \in \mathbb S^n, \end{align*}  
which yields $\norm{E}_{L^\infty(\mathbb S^{n})} \leq D$. Substituting this into \eqref{almost-ortho} and invoking the definition \eqref{CND} of $\mathfrak C(N, D; n)$, we obtain 
\begin{equation}\label{eqn:indcurve}
\mathfrak C(N, D; n) \leq \mathfrak C(N/2, D;n) + 3\sqrt{D}.
\end{equation}
The claim \eqref{eqn:curve} now follows from \eqref{eqn:indcurve}, either by iteration or an induction on $N$. 
\end{proof} 
\noindent {\em{Remarks: }} 
\begin{enumerate}[1.]
\item The estimate \eqref{eqn:curve} is optimal, both in the exponent of $D$ and of $\log N$. We expand on this below. 
\vskip0.1in 
\item If we choose $n=1$ and $\Gamma: \mathbb R \rightarrow \mathbb R$ as the identity map, then $D = 1$. In this case, Theorem \ref{CDR-reproof} yields the well-known estimate \eqref{eqn:2d} for the maximal directional Hilbert transform in $\mathbb R^2$ associated with a general direction set $\Omega \subseteq \mathbb R$ of cardinality $N$.  The bound \eqref{eqn:2d} is sharp  \cite[Theorem 1]{Kim}, as can be seen for the uniform direction set $\Omega = U_N$ defined in \eqref{def-uniform}. This shows that the exponent of $\log N$ cannot be replaced by anything smaller than 1.  
\vskip0.1in
\item On the other hand, the power of $D$ is optimal as well. Let us choose $n= 2$, $\Omega_N = U_M^2$ with $M^2 = N$. From \cite[Theorem 2]{Kim}, we know that there exists a constant $c > 0$ such that 
\begin{equation} \label{uniform-2}
||H_{\Omega_N}||_{2 \rightarrow 2} \geq c N^{1/4}. 
\end{equation} 
Let us now define a curve $\Gamma$ that traces the points of $\Omega$ in horizontal rows, as follows, 
\begin{align*}  \Gamma &= \bigcup_{j = 1}^M \Gamma_{j} \cup \Gamma'_{j}, \text{ where } \Gamma_j =  [0,1] \times \{ {j}/{M}\},  \\  
\Gamma_j' &= \{ \epsilon_j \} \times [j/M, (j+1)/M], \quad \text{ and } \quad \epsilon_j = \begin{cases} 1 &\text{ if } j \text{ is odd } \\ 0 &\text{ if } j \text{ is even }. \end{cases}   
\end{align*} 
It is easy to see that $\Omega_N \subseteq \Gamma$. Further, any line that is not horizontal or vertical intersects $\Gamma$ in at most $M$ points, hence $\Gamma \in \mathcal G_{D}$ for $D = M = N^{1/2}$, Substituting this into \eqref{eqn:curve} yields the bound of \[ ||H_{\Omega_N}||_{2 \rightarrow 2} \leq C N^{1/4} \log N.\]  In view of \eqref{uniform-2}, this upper bound is sharp except possibly the factor of $\log N$. Hence the power of $D$ in \eqref{eqn:curve} cannot be further reduced, since any such reduction would violate \eqref{uniform-2} for this example.   
\end{enumerate} 
\vskip0.1in
\subsection{Direction sets given by special products} 
We now turn to direction sets with a larger number of independent parameters. Given $n \geq 1$, let us fix integers $N_1 \geq N_2 \geq \cdots \geq N_n \geq 1$. 
As in \eqref{CND}, we define 
\begin{equation} \label{def-Cprod} 
\begin{split}
&\mathfrak C_{\text{prod}}(N_1, \cdots, N_n; n) :=  \\ 
& \sup \left\{ ||H_{\Omega}||_{2 \rightarrow 2} \; \Bigl| \; \begin{aligned} &\Omega = \Omega_1 \times \cdots \Omega_n, \text{ where }  \Omega_j \subset \mathbb R \text{ and}  \\ & \#(\Omega_j) \leq N_j \text{ for all } 1 \leq j \leq n \end{aligned}  \right\}.  
\end{split}
\end{equation}  
The article \cite{Kim} provides sharp bounds for $\mathfrak C_{\text{prod}}(N_1, \cdots, N_1; n)$, i.e., for direction sets given by Cartesian products of sets of equal cardinalities; specifically, it is shown that for some constant $C = C_n > 0$, 
\begin{equation} \label{Kim-Cprod}
\mathfrak C_{\text{prod}}(N_1, \cdots, N_1; n) \leq \begin{cases} C\log (N_1 + 1) &\text{ if } n = 1, \\ C N_1^{\frac{n-1}{2}} &\text{ if } n \geq 2. \end{cases} 
\end{equation} 
 We will generalize this result shortly in the next section, in Theorem \ref{thm:prod}.  
As preparation for this, and as a simple illustration of the main ideas, we use Theorem \ref{thm:ortho} to reprove a result of \cite{Kim} in a special case. 
\begin{theorem} \cite[Theorem 2]{Kim} \label{Kim-theorem} 
Let $n=2$. Then there exists an absolute constant $C > 0$ such that the quantity $\mathfrak C_{\text{prod}}$ defined in \eqref{def-Cprod} obeys the estimate:
\begin{equation} \mathfrak C_{\text{prod}}(N_1, N_1; 2) \leq C \sqrt{N_1}. \label{Kim-dim2} \end{equation}   
\end{theorem}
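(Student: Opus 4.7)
The plan is to apply Theorem \ref{thm:ortho} with cells given by bounding rectangles of pairs of consecutive points in each coordinate, obtaining a clean recursion in $N_1$ whose solution is $\sqrt{N_1}$.

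Concretely, write $\Omega_1 = \{x_1 < x_2 < \cdots < x_{N_1}\}$ and $\Omega_2 = \{y_1 < y_2 < \cdots < y_{N_1}\}$, assume without loss of generality that $N_1$ is a power of $2$, and introduce the $(N_1/2)^2$ closed rectangular cells
\[
O_{ij} := [x_{2i-1}, x_{2i}] \times [y_{2j-1}, y_{2j}], \qquad 1 \leq i,j \leq N_1/2.
\]
Each $O_{ij}$ is connected and the collection $\mathbb{O} = \{O_{ij}\}$ covers $\Omega$. For each $O_{ij}$, choose the representative $v_{ij} := (x_{2i}, y_{2j}) \in O_{ij}$. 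The key observation is that with these choices, the auxiliary set $\mathcal{O} = \{x_2, x_4, \ldots, x_{N_1}\} \times \{y_2, y_4, \ldots, y_{N_1}\}$ is again a product of two sets each of cardinality $N_1/2$. Hence $\|H_\mathcal{O}\|_{2\to 2} \leq \mathfrak{C}_{\text{prod}}(N_1/2, N_1/2; 2)$, which is precisely the inductive quantity we are trying to control.

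The remaining two ingredients in \eqref{almost-ortho} are straightforward: each $\Omega_{ij} = \Omega \cap O_{ij}$ consists of exactly $4$ points, so the trivial estimate $\|H_{\Omega_{ij}}\|_{2\to 2} \leq 4\sup_v \|H_v\|_{2\to 2} = O(1)$ suffices. For the function $E$, note that $Z(P_u)$ is an affine line in $\mathbb R^2$; the collection $\{O_{ij}\}$ is arranged in a super-grid of size $(N_1/2) \times (N_1/2)$, and the standard combinatorial fact that a line meets at most $m+n-1$ cells in an $m \times n$ rectangular grid gives $\|E\|_{L^\infty(\mathbb S^2)} \leq N_1 - 1$. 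Plugging these into Theorem \ref{thm:ortho} yields the recursion
\[
T(N_1) \;\leq\; T(N_1/2) + C\sqrt{N_1},\qquad\text{where}\qquad T(N) := \mathfrak{C}_{\text{prod}}(N,N;2).
\]
Iterating this recursion $\log_2 N_1$ times and summing the geometric series $\sqrt{N_1} \sum_{k\geq 0} 2^{-k/2}$, whose ratio $1/\sqrt{2}<1$ ensures convergence, gives $T(N_1) \leq T(1) + C'\sqrt{N_1} = O(\sqrt{N_1})$ as claimed.

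The key design principle, and the place where something could easily go wrong, is the balancing of the three quantities produced by Theorem \ref{thm:ortho}. More naive coverings (singletons, whole rows, whole columns, or even a $k\times k$ grid with $k$ fixed) yield recursions of the form $T(N) \leq \alpha \, T(N/c) + \beta$ whose solution is $N^{\log_c \alpha}$ or contains an extra $\log N$ factor; the $2\times 2$ grouping is the unique choice that makes $\sqrt{\|E\|_\infty} \cdot \max_j \|H_{\Omega_j}\|_{2\to 2}$ equal to $O(\sqrt{N_1})$ while simultaneously producing a $\mathcal{O}$ which is itself a product of half-size, so that the recursion closes and the logarithmic factor is avoided.
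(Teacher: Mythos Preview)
Your proof is correct and follows essentially the same approach as the paper: both partition $\Omega$ into $(N_1/2)^2$ rectangular cells each containing a $2\times 2$ block of points, apply Theorem \ref{thm:ortho} with $\mathcal O$ a product of half-size sets, bound $\|E\|_\infty \leq N_1-1$, and solve the resulting recursion $T(N_1)\leq T(N_1/2)+C\sqrt{N_1}$. The only cosmetic differences are that the paper uses open intervals partitioning $\mathbb R$ rather than closed bounding boxes, and justifies the line-versus-grid bound via a short polynomial argument (Lemma \ref{elementary-geometric-lemma}) rather than the combinatorial crossing count you invoke.
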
 
\begin{proof} 
Without loss of generality, we may assume $N_1$ is a power of 2, i.e., of the form $N_1 = 2^r$, $r \geq 0$. We will prove \eqref{Kim-dim2} by induction on $r$ 
 with $C= 5/(1-2^{-1/2})$. For the base case $r = 0$ or $N_1= 1$, the statement is valid since $\mathfrak C_{\text{prod}}(1, 1; 2) = 1$ and $C \geq 1$. For the inductive step, we assume that \eqref{Kim-dim2} holds for all integers $N_1 = 2^r$ with $r < R$. We aim to prove \eqref{Kim-dim2} for $N_1 = 2^{R}$. Accordingly, let us fix a direction set $\om = \Omega_1 \times  \Omega_2$ of cardinality $N= 2^{2R}$, where both $\Omega_1, \Omega_2 \subset \R$ have cardinality $2^{R}$. 
 \vskip0.1in
\noindent Let $\{I_k = (\alpha_{k-1}, \alpha_{k}) : 1 \leq k \leq N_1/2= 2^{R-1} \}$ be a finite cover of $\Omega_1$ consisting of a collection of disjoint intervals in $\mathbb R$, with $\alpha_0 = -\infty$, $\alpha_{N_1/2} = \infty$, and each $I_k$ containing exactly two consecutive elements of $\Omega_1$.  Let $\{ J_{\ell} = (\beta_{\ell-1}, \beta_{\ell}): 1 \leq \ell \leq 2^{R-1} \}$ be a similar cover for $\Omega_2$. Based on these, we choose the axes-parallel rectangles $\{ O_{k \ell} = I_k \times J_{\ell} : 1 \leq k, \ell \leq 2^{R-1} \}$, which will serve as the connected sets $O_j$ required by Theorem \ref{thm:ortho}. Clearly, the sets $\{O_{k \ell} \}$ form a finite cover of $\Omega$, and each set $\Omega_{k \ell} = \Omega \cap O_{k \ell}$ contains exactly 4 points, so that 
\begin{equation} \label{4-element-omkl} 
||H_{\Omega_{k \ell}} ||_{2 \rightarrow 2} \leq 4.
\end{equation} 
Let us also record here that 
\begin{equation} 
||E||_{L^{\infty}(\mathbb S^2)} := \sup_{u \in \mathbb S^2} \#\bigl\{(k, \ell) : Z(P_u) \cap O_{k \ell} \ne \emptyset \bigr\} \leq N_1 = 2^R, \label{E-bound}
 \end{equation} 
 in the notation of Theorem \ref{thm:ortho}. In fact, $Z(P_u)$ is an affine line, and any line $L$ in $\mathbb R^2$ intersects at most $N_1-1$ of the sets $O_{k \ell}$, i.e.
\begin{equation} 
\# \bigl\{(k, \ell) : L \cap O_{k \ell} \ne \emptyset \bigr\} \leq N_1-1,  \label{Example-E} 
\end{equation} 
which implies \eqref{E-bound}. An elementary proof of the geometric statement \eqref{Example-E} can be found in the Lemma \ref{elementary-geometric-lemma} below.  
\vskip0.1in
\noindent Assuming \eqref{Example-E} for now, let us choose points $a_k \in I_k$ and $b_{\ell} \in J_{\ell}$, and set \[ \Omega_1' = \{ a_k : 1 \leq k \leq 2^{R-1}\}, \quad \Omega_2' = \{ b_{\ell} : 1 \leq \ell \leq 2^{R-1} \}, \quad \mathcal O = \Omega_1' \times \Omega_2'. \]
We observe that $\mathcal O$ is a product of two sets, each of size $N_1/2 = 2^{R-1}$; hence invoking Theorem \ref{thm:ortho} with \eqref{4-element-omkl} and \eqref{E-bound} yields 
\[ ||H_{\Omega}||_{2 \rightarrow 2} \leq ||H_{\mathcal O}||_{2 \rightarrow 2} + 2^{\frac{R}{2}} (4+1) \leq \mathfrak C_{\text{prod}}(2^{R-1}, 2^{R-1}; 2) + 5 2^{R/2}. \] 
Taking supremum of the left hand side above over all product sets $\Omega = \Omega_1 \times \Omega_2$ with $\#(\Omega_1) = \#(\Omega_2) = N_1$ and applying the induction hypothesis, we obtain   
\[ \mathfrak C_{\text{prod}}(2^R, 2^R; 2) \leq \mathfrak C_{\text{prod}}(2^{R-1}, 2^{R-1}; 2) + 5 2^{R/2} \leq (C 2^{-1/2}+5)\sqrt{2^R} = C 2^{R/2}, \]
by our choice of $C$. This closes the induction and completes the proof. 
\end{proof}

\begin{lem} \label{elementary-geometric-lemma} 
In the setup described in the proof of Theorem \ref{Kim-theorem}, the estimate \eqref{Example-E} holds for every line $L$ in $\mathbb R^2$.  
\end{lem}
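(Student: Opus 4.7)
The plan is to bound the number of cells visited by $L$ in terms of the number of grid-line crossings along $L$. Setting $M = N_1/2$, the rectangles $\{O_{k\ell}\}$ are arranged in a grid whose interior boundaries are the $M-1$ vertical lines $\{x = \alpha_k : 1 \leq k \leq M-1\}$ together with the $M-1$ horizontal lines $\{y = \beta_\ell : 1 \leq \ell \leq M-1\}$. I would first treat the generic case where $L$ is neither horizontal nor vertical. Such a line meets each of the above vertical and horizontal grid lines in exactly one point, so the set of grid-line crossings on $L$ has at most $2(M-1) = N_1 - 2$ points.

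Next I would parametrize $L$ and consider the piecewise constant function $t \mapsto (k(t), \ell(t))$ recording the indices of the rectangle $O_{k(t),\ell(t)}$ containing $L(t)$, defined on the parameter set where $L(t)$ avoids every grid line. This function is constant between consecutive grid-line crossings, so its image --- the collection of cells intersected by $L$ --- has cardinality at most (number of distinct crossings) $+\,1 \leq N_1 - 1$. Coincidences at grid corners only reduce the number of distinct discontinuities, so the bound is preserved; this corner-bookkeeping is the one mildly delicate point in an otherwise routine counting exercise.

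Finally I would dispatch the axis-aligned cases. If $L = \{y = c\}$ is horizontal, then either $c = \beta_\ell$ for some $\ell$ and $L$ misses every open rectangle, or $c$ lies in the interior of a unique $J_\ell$ and $L$ meets at most $M$ rectangles (those in row $\ell$); since the lemma is invoked inside the inductive step where $N_1 \geq 2$, this is at most $N_1 - 1$. The vertical case is identical by symmetry.
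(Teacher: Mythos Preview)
Your proof is correct and follows essentially the same approach as the paper: count the number of grid-line crossings along $L$ (at most $N_1-2$ for a non-axis-aligned line) and conclude that the number of cells visited is at most one more. The only cosmetic difference is that the paper packages the grid as the zero set of a polynomial $P$ of degree $N_1-2$ and phrases the crossing count as a root count of $P$ restricted to $L$; this foreshadows the polynomial partitioning used later in the paper, but the underlying argument is the same as yours.
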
 
\begin{proof} 
Let us consider the polynomial 
\begin{equation}  P(x_1, x_2) = \prod_{k=1}^{N_1/2-1} (x_1 - \alpha_k) \prod_{\ell=1}^{N_1/2-1} (x_2 - \beta_{\ell}). \label{What-is-P} \end{equation}  
It is clear that the sets $O_{k \ell}$ defined in the proof of Theorem \ref{Kim-theorem} are the ``cells", or connected components, of $\mathbb R^2 \setminus Z(P)$. The notation $Z(P)$ represents the zero set of $P$, which in this case consists of $N_1/2-1$ vertical lines $\{ \alpha_k \} \times \mathbb R$ and $N_1/2-1$ horizontal lines $\mathbb R \times \{\beta_{\ell} \}$, with $1 \leq k, \ell \leq N_1/2-1$. If $L$ is a horizontal or a vertical line, it is clear that it intersects at most $N_1/2$ of the cells $O_{k \ell}$. Since $N_1/2 \leq N_1-1$, the inequality \eqref{Example-E} follows immediately in this case. If $L$ is not such a line, then $L$ is given by an equation of the form $x_2 = a x_1 + b$ for some nonzero, finite slope $a$. Substituting this into the expression \eqref{What-is-P} for $P$ leads to a univariate polynomial in $x_1$ of degree $2(N_1/2-1) = N_1-2$. This means that $L$ intersects at most $N_1-2$ points in $Z(P)$; in other words, $L$ can intersect at most $N_1-1$ cells $O_{k \ell}$. This provides the required estimate \eqref{Example-E}, completing the proof.     
\end{proof} 
\section{Applications and new results: Part 2} \label{Example-section-2} 
\noindent The results here lie in four largely unrelated directions, except for the common theme that Theorem \ref{thm:ortho} appears in all their proofs. We present them in separate subsections. Proofs are often relegated to later sections. 
\subsection{Maximal directional Hilbert transforms in $\mathbb R^2$ for direction sets of mixed type}  The notion of finite order lacunarity in $\mathbb R$, in connection with directional operators, first appears in the work of Sj\"ogren and Sj\"olin \cite{Sjogren-Sjolin}. We refer the reader to this article for the relevant definitions. Direction sets $\Theta \subseteq \mathbb R$ that are lacunary of finite order  play a key role in the study of the directional maximal average $M_\Theta$ defined in \eqref{max-dir}. For example, if $\Theta \subseteq \mathbb R$ is an infinite direction set, the following dichotomy is known \cite{{Sjogren-Sjolin}, {Alf}, {Bateman}} for $M_{\Theta}$ in $\mathbb R^2$: 
\vskip0.1in
\begin{enumerate}[1.] 
\item Suppose there exists $0 < \lambda < 1$ and $R \geq 1$ such that $\Theta$ can be covered by a finite union of sets, each of which is lacunary of order at most $R$ with lacunarity constant at most $\lambda$. Then $M_{\Theta}$ is bounded on $L^p(\mathbb R^2)$ for all $1 < p \leq \infty$.  
\vskip0.1in 
\item Suppose that $\Theta$ does not admit a finite cover of the type mentioned above. Then $M_{\Theta}$ is unbounded on $L^p(\mathbb R^2)$ for all $1 \leq p < \infty$. 
\end{enumerate} 
\vskip0.1in
The situation for the maximal directional average in $\mathbb R^2$ leads one to consider the possibility of a similar dichotomy for the maximal directional Hilbert transform, suitably interpreted. Of course $H_{\Theta}$ is unbounded on $L^p$ for all $p \in (1, \infty)$ since $\Theta$ is infinite, but it is of interest to quantify the growth rates of the operator norms associated with various finitary exhaustions of $\Theta$. In particular, the uniform lower bound \eqref{maxDHT-lowerbound} and the uniform upper bound \eqref{eqn:2d} prompt the following natural questions:
\vskip0.1in
\noindent {\em{Question 1:}} Is there a classification of the blow-up rates for the maximal directional Hilbert transform, depending on the intrinsic geometric structure of the direction set? More precisely, suppose that $S \subseteq [1/2, 1]$ denotes the set of ``possible blow-up exponents'' in $\mathbb R^2$; explicitly stated, $S$ consists of all exponents $\alpha$ such that there exists an infinite direction set $\Theta$, and a choice of a sequence \label{Questions}
\begin{equation} \label{Theta-sequence} 
\Theta_1 \subsetneq \Theta_2 \subsetneq \cdots \Theta_N \subsetneq \cdots \Theta \subseteq \mathbb R
\end{equation} 
of increasing finite subsets of $\Theta$, such that 
\[ 0 <  \liminf_{N \rightarrow \infty}\frac{||H_{\Theta_N}||_{2 \rightarrow 2}}{(\log \#(\Theta_N))^{\alpha}} \leq \limsup_{N \rightarrow \infty}\frac{||H_{\Theta_N}||_{2 \rightarrow 2}}{(\log \#(\Theta_N))^{\alpha}} < \infty. \] 
Can one give a complete description of $S$? 
\vskip0.1in
\noindent In Theorem \ref{thm:growth-alpha} below, we show that $S = [1/2,1]$, i.e., every number in $[1/2, 1]$ is realizable as a blow-up exponent of $||H_{\Theta_N}||_{2 \rightarrow 2}$ for an appropriate choice of $\Theta \subseteq \mathbb R$.  
\vskip0.1in 
\noindent {\em{Question 2:}} Does finite order lacunarity play a distinguished role for the maximal directional Hilbert transform as well? For instance, in the notation of \eqref{Theta-sequence}, does the blow-up rate \begin{equation} ||H_{\Theta_N}||_{2 \rightarrow 2} \sim \sqrt{\log \#(\Theta_N)} \label{blowup-half} \end{equation}  imply that $\Theta$ has to be lacunary of finite order?     
\vskip0.1in 
\noindent In Corollary \ref{cor-dim2}, we answer this question in the negative, by constructing an infinite set $\Theta$ that is not lacunary of any finite order, which permits an increasing sequence of finite subsets $\Theta_N$ obeying \eqref{blowup-half}. 
\vskip0.1in 
\begin{theorem} \label{thm:growth-alpha} 
For any exponent $\alpha \in [\frac12, 1]$, there exists an infinite direction set $\Theta = \Theta(\alpha)$ and subsets $\Theta_N = \Theta_N(\alpha)$, with 
\begin{equation}  \Theta_1 \subsetneq \Theta_2  \subsetneq \cdots \subsetneq \Theta_N \subsetneq \cdots \subsetneq \Theta, \quad \#(\Theta_N) \nearrow \infty, \label{conditions-ThetaN} \end{equation} 
such that 
\begin{equation} \label{ThetaN-op-norm}   C^{-1} (\log \#(\Theta_N))^{\alpha} \leq ||H_{\Theta_N}||_{L^2(\mathbb R^2) \rightarrow L^2(\mathbb R^2)} \leq C (\log \#(\Theta_N))^{\alpha}. \end{equation}   
Here $C > 0$ is an absolute constant, independent of $\alpha$. 
\end{theorem}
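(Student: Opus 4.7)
My plan is to build $\Theta(\alpha)$ as an infinite union of widely separated clusters, each an affine copy of a uniform direction set $U_M$, with the number of clusters $J$ and the size $M$ chosen so that $\|H_{\Theta_N}\|_{2\to 2}$ is driven to $\asymp (\log N)^{\alpha}$ --- interpolating between the lacunary endpoint ($\alpha=\tfrac12$, bound $\sqrt{\log N}$) and the uniform endpoint ($\alpha=1$, bound $\log N$).

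Fix lacunary centers $c_j := 3^{-j}$ and scales $d_j := 3^{-3j}$ so that the intervals $I_j := [c_j-d_j,c_j+d_j]$ are pairwise disjoint and lacunary. Construct a nested enumeration $(s_i)_{i\geq 1}\subset (0,1]$ with $\{s_1,\ldots,s_{2^k}\}=U_{2^k}$ for every $k\geq 0$, by inserting midpoints at each doubling. Setting $M_n:=2^{\lceil n^{\alpha}\rceil}$, $J_n:=2^{n-\lceil n^{\alpha}\rceil}$, and $N_n:=M_n J_n=2^n$, define
\[
\Theta_{N_n} := \bigcup_{j=1}^{J_n}\bigl\{c_j+d_j s_i : 1\leq i\leq M_n\bigr\}, \qquad \Theta(\alpha):=\bigcup_{n\geq n_0}\Theta_{N_n},
\]
where $n_0$ is large enough that $\lceil n^{\alpha}\rceil$ increases by at most one per step (so each passage $n\to n+1$ either doubles $M_n$ or doubles $J_n$). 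Then $\Theta_{N_n}\subsetneq \Theta_{N_{n+1}}$, and intermediate cardinalities can be filled in by adjoining one point at a time to produce the full chain required by \eqref{conditions-ThetaN}.

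For the upper bound I would apply Theorem \ref{thm:ortho} in its $n=1$ form \eqref{almost-ortho-n1} with the disjoint connected cover $\{I_j:1\leq j\leq J_n\}$ of $\Theta_{N_n}$ and representatives $\cO:=\{c_j\}_{j=1}^{J_n}$. Since $\cO$ is lacunary of cardinality $J_n$, the Demeter--Di Plinio estimate \eqref{eqn:lac} gives $\|H_\cO\|_{2\to 2}\leq C\sqrt{\log J_n}$. Each piece $\Theta_{N_n}\cap I_j=c_j+d_j U_{M_n}$ is an affine image of $U_{M_n}$, and since $\|H_\Omega\|_{2\to 2}$ is invariant under rescaling and translation of $\Omega\subset\R$ (conjugate by an upper-triangular element of $\mathrm{GL}_2(\R)$ on $\R^2$), the bound \eqref{eqn:2d} applied to $U_{M_n}$ yields $\|H_{\Theta_{N_n}\cap I_j}\|_{2\to 2}\leq C\log M_n$. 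Combining these in \eqref{almost-ortho-n1},
\[
\|H_{\Theta_{N_n}}\|_{2\to 2} \leq C\bigl(\sqrt{\log J_n}+\log M_n+1\bigr) \leq C'(\log N_n)^{\alpha},
\]
where the last step uses $\log J_n\leq \log N_n$ together with $\alpha\geq\tfrac12$ to absorb the lacunary term, and $\log M_n\asymp (\log N_n)^{\alpha}$ by construction. The matching lower bound is immediate: $\Theta_{N_n}$ contains the first cluster, so pointwise domination of the supremum gives $\|H_{\Theta_{N_n}}\|_{2\to 2}\geq \|H_{U_{M_n}}\|_{2\to 2}\geq c\log M_n\geq c'(\log N_n)^{\alpha}$, invoking the sharp lower bound on $\|H_{U_M}\|_{2\to 2}$ noted in the remarks after Theorem \ref{CDR-reproof}.

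The main obstacle is essentially bookkeeping --- verifying that a single nested family simultaneously realizes the prescribed exponent $\alpha$ and the strict containment demanded by \eqref{conditions-ThetaN}. Once Theorem \ref{thm:ortho} splits $H_{\Theta_{N_n}}$ cleanly into a lacunary component (cost $\sqrt{\log J_n}$) and a single-cluster uniform component (cost $\log M_n$), the balancing $\log M_n\asymp n^{\alpha}$ against $\log J_n\leq n\log 2$ forces the answer to be $\asymp(\log N_n)^{\alpha}$ precisely when $\alpha\in[\tfrac12,1]$: the lower endpoint is saturated by the $\sqrt{\log N}$ cost of the lacunary skeleton, and the upper endpoint by the $\log N$ cost of a single uniform cluster.
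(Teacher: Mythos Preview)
Your proof is correct and follows essentially the same strategy as the paper's: both constructions place affine copies of uniform sets $U_M$ at lacunary centers, then apply Theorem~\ref{thm:ortho} in its one-dimensional form \eqref{almost-ortho-n1} to split $\|H_{\Theta_N}\|_{2\to2}$ into a lacunary skeleton term (bounded via \eqref{eqn:lac}) plus a single-cluster term (bounded via \eqref{eqn:2d}), with the lower bound coming from Kim's sharp estimate for $H_{U_M}$ together with affine invariance (Lemma~\ref{lem:modify}). The only differences are cosmetic---your explicit dyadic parameterization $M_n=2^{\lceil n^\alpha\rceil}$, $J_n=2^{n-\lceil n^\alpha\rceil}$ and midpoint-insertion scheme in place of the paper's divisibility condition $M_N\mid M_{N+1}$---and your verification that $\lceil n^\alpha\rceil$ increases by at most one per step makes the nesting argument slightly cleaner.
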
 
\noindent {\em{Remark: }} The above result is planar. In Corollary \ref{prescribed-growth-cor} of the next subsection, we prove an analogous result in dimensions three and higher.  For every $n \geq 2$ and any choice of $\alpha \in (0, \frac{n-1}{2n})$ and $\beta \geq 0$, we find an increasing sequence of sets $\Theta_N \subsetneq \Theta \subseteq \mathbb R^n$, such that $||H_{\Theta_N}||_{2 \rightarrow 2}$ goes to infinity at the rate of $ (\# \Theta_N)^{\alpha} (\log \# \Theta_N)^{\beta}$.
\vskip0.1in 
\begin{proof} 
For a given exponent $\alpha \in [\frac12, 1]$, we choose an increasing sequence of positive integers \[ M_1 \ll R_1 \ll M_2 \ll R_2 \ll \cdots M_N \ll R_N \ll \cdots \]  such that $M_N$ divides $M_{N+1}$ for every $N$, and 
\begin{equation} \label{logMN-logN}  \frac{1}{2} (\log R_N)^{\alpha} \leq \log M_N \leq (\log R_N)^{\alpha} \quad \text{ for all } N \geq 1. \end{equation}  
Let us set 
\begin{equation} \label{def-Theta-ThetaN} \Theta_N := \bigcup_{j=1}^{R_N} 2^{-j} + 2^{-j} U_{M_N}, \quad \text{ and } \quad \Theta = \bigcup_{N=1}^{\infty} \Theta_N. \end{equation}  
where $U_M$ is the uniform direction set given by \eqref{def-uniform}. Thus each $\Theta_N$ is an $R_N$-fold union of affine copies of the uniform direction set $U_{M_N}$; each copy is arranged within the successive elements of a finite lacunary sequence $\{2^{-j}, 1 \leq j \leq R_N\}$.  The fact that $M_{N}$ is an integer multiple of $M_{N-1}$ ensures that $U_{M_{N-1}} \subsetneq U_{M_N}$. Hence the sets $\Theta_N$ obey the inclusion relation in \eqref{conditions-ThetaN}, with $\#(\Theta_N) = R_NM_N \nearrow \infty$. In view of \eqref{logMN-logN} and the restriction $\alpha \leq 1$, we observe that  
\begin{equation} \label{logOmega_N} 
\log R_N \leq \log \#(\Theta_N) = \log R_N + \log M_N \leq 2 \log R_N. 
\end{equation} 
\vskip0.1in
\noindent To estimate $||H_{\Theta_N}||_{2 \rightarrow 2}$, we apply Theorem \ref{thm:ortho} with $n=1$, $\Omega = \Theta_N$, $O_j = [2^{-j}, 2^{-j+1})$, $1 \leq j \leq R_N$, and $\mathcal O = \{2^{-j} : 1\leq j \leq R_N \}$.  As a result $\Omega_j = 2^{-j} + 2^{-j} U_{M_N}$, which is an affine copy of $U_{M_N}$. As discussed in item \ref{remark-n1} of the remarks following Theorem \ref{thm:ortho}, an application of \eqref{almost-ortho-n1} yields 
\begin{align*}
||H_{\Theta_N}||_{2 \rightarrow 2} &\leq ||H_{\mathcal O}||_{2 \rightarrow 2} + \max_{j} ||H_{\Omega_j}||_{2 \rightarrow 2} + 1\\
&\leq C (\sqrt{\log R_N} + \log M_N) + 1 \\ &\leq C \bigl[ (\log R_N)^{\frac{1}{2}}  + (\log R_N)^{\alpha} \bigr] \\& \leq C (\log R_N)^{\alpha} \leq C (\log \#(\Theta_N))^{\alpha}. 
\end{align*} 
In the second inequality, we have used two known results: 
\[ ||H_{\mathcal O}||_{2 \rightarrow 2} \leq C \sqrt{\log \#(\mathcal O)} \;\; \text{ and } \;\; ||H_{\Omega_j}||_{2 \rightarrow 2} \leq  C \log \#( \Omega_j) \text{ for } 1 \leq j \leq N.  \] The first estimate follows from the work of Demeter and Di Plinio \cite{DD}, and has been mentioned in \eqref{eqn:lac}. The second estimate is a consequence of the general estimate \eqref{eqn:2d}. Invoking \eqref{logMN-logN} and \eqref{logOmega_N} leads to the final expression.  This establishes the right hand inequality in \eqref{ThetaN-op-norm}. 
\vskip0.1in  
\noindent To establish the left hand inequality in \eqref{ThetaN-op-norm}, we observe that $\Theta_N \supseteq 1/2 + (1/2) U_{M_N}$. This leads to  
\[ ||H_{\Theta_N}||_{2 \rightarrow 2} \geq ||H_{2^{-1} + 2^{-1} U_{M_N}}||_{2 \rightarrow 2} =  ||H_{U_{M_N}}||_{2 \rightarrow 2} \]
by the invariance of the operator norm of maximal directional Hilbert transform under affine transformations of the direction set: Lemma \ref{lem:modify}. By \cite[Theorem 1]{Kim}, the last quantity is bounded below by a constant multiple of $\log M_N$. In view of \eqref{logMN-logN} and \eqref{logOmega_N}, we have that $\log M_N \geq (\log R_N)^{\alpha}/2 \geq (\log \#(\Theta_N))^{\alpha}/4$. This completes the proof of the theorem. 
\end{proof} 
\begin{cor} \label{cor-dim2} 
There exists an infinite set $\Theta \subseteq \mathbb R$ with the following properties:
\begin{enumerate}[(a)]
\item \label{alpha-half-parta} There does not exist any $\lambda < 1$ or $1 \leq R < \infty$ such that $\Theta$ can be covered by finitely many lacunary sets of order at most $R$ and lacunarity constant $\lambda <1$. 
\item \label{alpha-half-partb} There exists an exhaustion of $\Theta$ by an increasing sequence of finite sets $\Theta_N$ such that 
\[ C^{-1} \sqrt{\log \#(\Theta_N)} \leq ||H_{\Theta_N}||_{L^2(\mathbb R^2) \rightarrow L^2(\mathbb R^2)} \leq C \sqrt{\log \#(\Theta_N)}. \] 
\end{enumerate} 
\end{cor}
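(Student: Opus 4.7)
The plan is to apply Theorem \ref{thm:growth-alpha} with the specific exponent $\alpha = 1/2$, which produces an infinite set $\Theta := \Theta(1/2)$ and an exhausting chain $\Theta_N := \Theta_N(1/2)$. The two-sided operator norm bound \eqref{ThetaN-op-norm} with $\alpha = 1/2$ is precisely part (b) of the corollary, so the only remaining task is to verify part (a): that this specific $\Theta$ admits no finite cover by lacunary sets of any finite order with lacunarity constant strictly less than $1$.

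My plan for part (a) is to show that $\overline{\Theta}$ is uncountable, which will contradict the fact that any finite-order lacunary set has countable closure. Inspecting the construction \eqref{def-Theta-ThetaN}, I would first note that $\Theta \supseteq \tfrac{1}{2} + \tfrac{1}{2} U_{M_N}$ for every $N$; since $M_N \nearrow \infty$ (a consequence of \eqref{logMN-logN} with $\alpha = 1/2$ and $R_N \nearrow \infty$) and since $M_{N-1} \mid M_N$ forces the nested inclusion $U_{M_{N-1}} \subseteq U_{M_N}$, the increasing union $U^* := \bigcup_N U_{M_N}$ is a sequence of meshes of $(0,1]$ with spacing $1/M_N \to 0$, and is therefore dense in $[0,1]$. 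Consequently $\overline{\Theta} \supseteq [\tfrac{1}{2}, 1]$ is uncountable.

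Next, I would establish the ``countable closure'' lemma for finite-order lacunary sets by induction on the order $R$: the derived set of a set lacunary of order $R$ with constant $\lambda$ is itself lacunary of order $R-1$ with the same $\lambda$, and the base case is immediate since a lacunary sequence of order one has exactly one accumulation point. A finite union of countable closed sets is still countable, so any cover of $\Theta$ of the form postulated in part (a) would force $\overline{\Theta}$ to be countable, contradicting the previous step.

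The main potential obstacle is only bookkeeping around the precise definition of ``lacunary of order $R$ with lacunarity constant $\lambda$'' adopted in \cite{Sjogren-Sjolin, Alf, Bateman}; under any of the standard conventions, however, the inductive closure argument goes through verbatim. The conceptual content of the corollary---that a non-lacunary $\Theta$ can nonetheless exhibit the minimal blow-up rate $\sqrt{\log N}$---is captured by the fact that the explicit $\Theta$ from Theorem \ref{thm:growth-alpha} simultaneously hides arbitrarily fine arithmetic progressions inside every lacunary band and yet, thanks to the almost-orthogonality principle, has operator norm no larger than $\sqrt{\log \#(\Theta_N)}$ on each $\Theta_N$.
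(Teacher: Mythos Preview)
Your proposal is correct. Part (b) is handled identically to the paper: both you and the paper invoke Theorem \ref{thm:growth-alpha} with $\alpha = 1/2$ and read off \eqref{ThetaN-op-norm}.

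For part (a), however, your route differs from the paper's. The paper argues non-lacunarity by appealing to the \emph{splitting number} of the binary tree representing $\Theta$ (as in \cite{Bateman}), noting that if each $M_N$ is a power of $2$ then the splitting number of $\Theta_N$ is at least $\log_2 M_N$, hence unbounded. Your argument is purely topological: since $\tfrac12 + \tfrac12 U_{M_N} \subseteq \Theta$ for every $N$ and $M_N \to \infty$, the closure $\overline{\Theta}$ contains the interval $[\tfrac12,1]$, while an inductive check shows that any set lacunary of finite order has countable closure, and this property survives finite unions. The contradiction with $\overline{\Theta} \supseteq [\tfrac12,1]$ then rules out any cover of the form in part (a). Your approach is more elementary and self-contained (no tree combinatorics or reference to \cite{Bateman} is needed), whereas the paper's approach ties the example more directly to the known dichotomy for $M_\Theta$ and gives quantitative information about how badly lacunarity fails at each finite stage $\Theta_N$. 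Both are valid; the only caveat in yours is that the inductive lemma ``the derived set of a lacunary set of order $R$ is lacunary of order $R-1$'' should be phrased as ``of order at most $R-1$'' under some conventions, but this does not affect the countability conclusion.
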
 
\begin{proof}
For $\alpha = 1/2$, let us choose $\Theta$ and $\Theta_N$ as in \eqref{def-Theta-ThetaN}. The conclusion of part \eqref{alpha-half-partb} of the corollary then follows from  \eqref{ThetaN-op-norm} in Theorem \ref{thm:growth-alpha}. The lack of finite order lacunarity of $\Theta$ is well-known and can either be verified directly from the definition in \cite{Sjogren-Sjolin} or by computing the splitting number of the binary tree depicting $\Theta$, as in \cite{Bateman}, and verifying that this quantity is infinite. For example, if each $M_N$ is a power of 2, then the splitting number of $\Theta_N$, and hence $\Theta$, is at least $\log_2 M_N$. The proof of this latter fact, which may be found in \cite{Bateman}, involves ideas largely unrelated with the main theme of this paper, and we choose to omit it here.   
\end{proof} 
 \subsection{Direction sets of general product type} As a consequence of Theorem \ref{thm:ortho}, we are able to extend Theorem \ref{Kim-theorem} to include direction sets given by Cartesian products of finite sets, where the finite sets are allowed to have different cardinalities. We recall that $\mathfrak C_{\text{prod}}(N_1, \cdots, N_n; n)$ is defined in \eqref{def-Cprod}.  
\begin{theorem} \label{thm:prod}
For every $n\geq 2$, there exists an absolute constant $C = C_n > 0$ such that for any choice of integers $N_1 \geq N_2 \geq \cdots \geq N_n \geq 1$, the following estimate holds:
\begin{equation}   \mathfrak C_{\text{prod}}(N_1, \cdots,  N_n; n) \leq C \Bigl[\prod_{k=2}^{n} N_k \Bigr]^{1/2} \log\Bigl(\frac{N_1}{N_2}+1\Bigr). \label{eqn:prod:norm} \end{equation} 
The bound is sharp; the reverse inequality holds with a different implicit constant $C$ for all direction sets of the form $\om= \prod_{k=1}^n U_{N_k}$, where $U_M$ is as in \eqref{def-uniform}.
\end{theorem}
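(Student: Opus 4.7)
The plan is to establish the upper bound by induction on $n$, with Theorem \ref{thm:ortho} serving as the workhorse at every stage. At each step the cell structure is chosen to exploit the product form of $\Omega$, so that the almost-orthogonality bound \eqref{almost-ortho} produces the correct balance between the $\sqrt{N_2 \cdots N_n}$ (Kim-type) contribution and the $\log(N_1/N_2 + 1)$ (lacunary-type) contribution. The base is $n = 2$, handled with the aid of the balanced estimate in Theorem \ref{Kim-theorem}; the inductive step reduces the $n$-fold product to the $(n-1)$-fold product.

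For the base $n = 2$ with $N_1 \geq N_2 \geq 2$, I would partition $\Omega_1$ into $N_2$ consecutive groups of cardinality at most $\lceil N_1/N_2 \rceil$, let $I_k$ be the closed convex hull of the $k$-th group, and apply Theorem \ref{thm:ortho} with the segment cells $O_{k, \ell} = I_k \times \{y_\ell\}$ indexed by $k \in \{1, \dots, N_2\}$ and $y_\ell \in \Omega_2$. Each $\Omega_{k, \ell}$ is a one-dimensional direction set of at most $\lceil N_1/N_2 \rceil$ points, so $\|H_{\Omega_{k, \ell}}\|_{2 \to 2} \les \log(N_1/N_2 + 1)$ by \eqref{eqn:2d}. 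Choosing representatives $v_{k, \ell} = (a_k, y_\ell)$ with distinct $a_k \in I_k \cap \Omega_1$ yields $\mathcal O = \{a_k\} \times \Omega_2$, a balanced $N_2 \times N_2$ product to which Theorem \ref{Kim-theorem} applies, giving $\|H_{\mathcal O}\|_{2 \to 2} \les \sqrt{N_2}$. A case analysis on the slope of the line $Z(P_u)$ (generic, vertical, horizontal), in the spirit of Lemma \ref{elementary-geometric-lemma}, shows $\|E\|_{L^\infty(\mathbb S^{2})} \leq 2 N_2$. Substituting these bounds into \eqref{almost-ortho} produces the claimed inequality. The edge cases $N_2 = 1$ and $N_1 = N_2$ follow respectively from the 1-dimensional bound \eqref{eqn:2d} via slicing and from Theorem \ref{Kim-theorem} directly.

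For the inductive step ($n \geq 3$), I would apply Theorem \ref{thm:ortho} with the hyperplane cells $O_\ell = \mathbb R^{n-1} \times \{y_\ell^{(n)}\}$ indexed by the smallest factor $\Omega_n$, so that $\|E\|_{L^\infty(\mathbb S^{n})}$ is kept as small as possible. A direct calculation shows $\|E\|_\infty = N_n$: for any $u \in \mathbb S^{n}$ with $(u_1, \ldots, u_{n-1}) \neq 0$ the zero set $Z(P_u)$ meets every $O_\ell$ transversely, while the remaining $u$'s give at most one intersection. Each $\Omega_\ell = \Omega_1 \times \cdots \times \Omega_{n-1} \times \{y_\ell^{(n)}\}$ has, after an affine translation via Lemma \ref{lem:modify} and a Fubini-type slicing step via Lemma \ref{lem:slice}, the same $L^2$ operator norm as $H_{\Omega_1 \times \cdots \times \Omega_{n-1}}$ acting on $L^2(\mathbb R^n)$, which by the induction hypothesis is $\les \sqrt{N_2 \cdots N_{n-1}} \log(N_1/N_2 + 1)$. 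Choosing $v_\ell = (q, y_\ell^{(n)})$ for a common $q \in \mathbb R^{n-1}$ makes $\mathcal O = \{q\} \times \Omega_n$, and another slicing reduces $\|H_{\mathcal O}\|_{2 \to 2}$ to the 1-dimensional bound $\les \log N_n$. Plugging into \eqref{almost-ortho} and absorbing the error terms $\log N_n + \sqrt{N_n}$ into the main term $\sqrt{N_2 \cdots N_n} \log(N_1/N_2 + 1)$ closes the induction.

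For the sharpness claim, I would verify the matching lower bound on $\Omega = \prod_k U_{N_k}$ by combining Joonil Kim's lower bound $\|H_{U_M^n}\|_{2 \to 2} \ges M^{(n-1)/2}$ applied to a balanced $N_2 \times \cdots \times N_2$ sub-grid of $\Omega$ with the one-dimensional lower bound $\|H_{U_M}\|_{2 \to 2} \ges \log M$ applied along the $U_{N_1}$ factor, then transferring to the ambient dimension via the slicing lemma. The main technical obstacle I anticipate is the rigorous justification of the slicing identity between operator norms on $L^2(\mathbb R^{n+1})$ and $L^2(\mathbb R^n)$, which must be compatible with the affine invariance of Lemma \ref{lem:modify} and which underpins both the inductive step and the sharpness argument; verifying that the constants do not degrade with $n$ will require careful tracking.
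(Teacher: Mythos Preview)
Your upper-bound argument is correct (modulo a small omission) and takes a genuinely different route from the paper. The paper inducts on the effective dimension $m$ (the largest index with $N_m>1$), partitioning \emph{all} $m$ active coordinates at once into $N_m$ pieces each; the resulting cells $O_{\vec\ell}$ carry product sets of sizes $\sim N_1/N_m \times \cdots \times N_{m-1}/N_m \times 1$, the representative set $\mathcal O$ is a balanced $N_m^m$ grid controlled by \eqref{Kim-Cprod}, and the overlap $\|E\|_\infty$ is bounded via Proposition~\ref{lem:boundE} by $O((mN_m)^{m-1})$. You instead peel off one dimension at a time, doing the delicate balancing only in the base case $n=2$ and using a trivial hyperplane slicing for $n\geq 3$. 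Your inductive step is shorter, but you must separately dispose of the case $N_n=1$ (there is then a single cell and \eqref{almost-ortho} is vacuous; use Lemma~\ref{lem:slice} directly to drop to $n-1$).

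The sharpness argument, however, has a genuine gap. Passing to a subset of $\Omega$ and invoking Kim's lower bound, or passing to a one-dimensional slice and invoking the $\log$ lower bound, only yields $\max\bigl(N_2^{(n-1)/2},\log N_1\bigr)$; there is no mechanism for \emph{multiplying} two lower bounds obtained on different subsets of a direction set. Concretely, take $N_2=\cdots=N_n=M$ and $N_1=M^M$: the claimed lower bound is $\sim M^{(n+1)/2}\log M$, but your two subset bounds give only $\max(M^{(n-1)/2},M\log M)$. (Note also that an ``$N_2\times\cdots\times N_2$ sub-grid'' need not exist inside $\prod_k U_{N_k}$ unless $N_2=\cdots=N_n$.) The paper's proof of sharpness (Section~\ref{sharpness-prod-section}) is not a reduction to known lower bounds; it is an explicit test-function computation. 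One takes $f(y)=\mathbf 1_{\mathcal R}(y)/(N_2N_1^{-1}+y_1+y_{n+1})$ on a fixed box $\mathcal R$, constructs pairwise disjoint sets $S_v\subset\mathbb R^{n+1}$ indexed by $v\in\Omega$ on which $|H_vf(x)|\gtrsim |x_{n+1}|^{-1}\log(4N_1/|x_{n+1}|)$, and then sums $\|H_vf\cdot\mathbf 1_{S_v}\|_2^2$ over $v$. The disjointness of the $S_v$ is what converts the $\sum_v$ into the factor $N/N_1=N_2\cdots N_n$, and the explicit pointwise bound produces the $\bigl[\log(N_1/N_2+1)\bigr]^3$ needed after dividing by $\|f\|_2^2\sim\log(N_1/N_2+1)$. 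This is where the substance of the lower bound lies, and it cannot be replaced by the combination you propose.
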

\noindent {\em{Remarks: }} 
\begin{enumerate}[1.] 
\item We illustrate the estimate in the case $n=2$. When  $N_1=N$ and $N_2 = 1$, Theorem \ref{thm:prod} shows that $\mathfrak C_{\text{prod}}(N, 1; 2) \leq C \log N$, recovering the $\log N$ bound in \eqref{eqn:2d} for $n=1$, in view of Lemma \ref{lem:slice}. When $N_1 = N_2$, Theorem \ref{thm:prod} recovers the $\sqrt{N_1}$ bound from \eqref{Kim-dim2}. 
\vskip0.1in

\item Theorem \ref{thm:prod} has been proved in section \ref{sec:prod}. 

\end{enumerate} 
\subsection{Maximal directional Hilbert transforms of prescribed growth} 
Theorem \ref{thm:prod} provides examples of sets of the form $\Omega = U_{N_1} \times U_{N_2} \times \cdots \times U_{N_n}$ with $N_1 N_2 \cdots N_n = N$ such that the $L^2$-operator norm of $H_{\Omega}$ exhibits growth rates of order $N^{\alpha}$  for every $0 < \alpha < (n-1)/(2n)$. This generalizes Theorem \ref{thm:growth-alpha} to the setting where $n \geq 2$. We state this observation as a corollary.
\begin{cor} \label{prescribed-growth-cor}
For $n \geq 2$, let us fix parameters $\alpha, \beta$ with $\alpha \in (0, \frac{n-1}{2n})$ and $\beta \in [0, \infty)$. Then there exists a constant $C= C(\alpha) > 0$ such that for every sufficiently large integer $N\geq N_0(\alpha, \beta)$, there is a direction set $\om \subset \R^n$, given by an $n$-fold Cartesian product of uniform sets $U_M$ of the form \eqref{def-uniform} that obeys the following conclusions:
\begin{align} 
N/2^n &\leq \#(\Omega) \leq 2^nN, \text{ and } \label{size-Omega} \\  
C^{-1} N^\alpha (\log N)^\beta &\leq  \norm{H_\om}\opn{n+1} \leq C N^\alpha (\log N)^\beta. \label{norm-MDHT}  
\end{align}  
\end{cor}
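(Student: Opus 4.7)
Given $\alpha \in (0, (n-1)/(2n))$ and $\beta \geq 0$, I would realize the prescribed growth rate by taking $\Omega$ to be the product set $U_L \times U_M^{n-1}$ for suitably chosen integers $M = M(N)$ and $L = L(N)$, where $U_M$ is the uniform direction set from \eqref{def-uniform}. With $N_1 = L$ and $N_2 = \cdots = N_n = M$, Theorem \ref{thm:prod} supplies matching upper and lower estimates
\[ \norm{H_\Omega}\op \asymp M^{(n-1)/2}\,\log(L/M+1), \]
so the task reduces to a scaling problem: find $M, L$ with $LM^{n-1} \asymp N$ such that the right-hand side above is comparable to $N^\alpha (\log N)^\beta$.

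I would make the ansatz $M \asymp N^{a}(\log N)^{d}$ and $L \asymp N^{b}(\log N)^{c}$. The cardinality constraint $LM^{n-1}\asymp N$ forces $b + (n-1)a = 1$ and $c + (n-1)d = 0$. Provided $L/M \to \infty$, so that $\log(L/M+1) \asymp (b-a)\log N$, matching the operator-norm scaling yields $(n-1)a/2 = \alpha$ and $(n-1)d/2 + 1 = \beta$, whence
\[ a = \frac{2\alpha}{n-1},\quad b = 1 - 2\alpha,\quad d = \frac{2(\beta-1)}{n-1},\quad c = 2-2\beta. \]
The strict inequality $b > a$ reduces precisely to $\alpha < (n-1)/(2n)$, which is the hypothesis; $a > 0$ follows from $\alpha > 0$.

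To pass to integers, I would set $M := \lceil N^a (\log N)^d \rceil$ and $L := \lfloor N/M^{n-1}\rfloor$. For $N$ above some threshold $N_0(\alpha,\beta)$, these obey $L \geq M \geq 1$ (since $b>a>0$ forces $M\to\infty$ and $L/M\to\infty$), and the product $LM^{n-1}$ lies in $[N/2,N]$, verifying \eqref{size-Omega}. Applying both directions of Theorem \ref{thm:prod} to $\Omega := U_L \times U_M^{n-1}$ and substituting the asymptotic expressions for $M$ and $L$ then yields \eqref{norm-MDHT}.

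The construction presents no serious obstacle: the corollary is a direct numerical consequence of the sharp bounds in Theorem \ref{thm:prod}. The only mild bookkeeping lies in controlling the rounding error in the definitions of $M$ and $L$ and confirming $\log(L/M+1) \asymp \log N$ — the latter reducing to the strict inequality $b-a>0$, which is precisely where the upper endpoint $\alpha = (n-1)/(2n)$ would cause the construction to fail.
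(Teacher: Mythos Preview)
Your proposal is correct and follows essentially the same approach as the paper: the paper also takes $\Omega = U_{N_1}\times U_{N_2}^{n-1}$ with the identical exponents $N_1 \asymp N^{1-2\alpha}(\log N)^{2(1-\beta)}$ and $N_2 \asymp N^{2\alpha/(n-1)}(\log N)^{2(\beta-1)/(n-1)}$, then invokes both directions of Theorem \ref{thm:prod}. The only cosmetic differences are in how the integers are rounded and that you derive the exponents via an ansatz rather than stating them directly.
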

\begin{proof}
Since $0 < \alpha < (n-1)/(2n)$, we can choose $N$ sufficiently large depending on $\alpha$ and $\beta$ so that 
\begin{equation} 
N^{\frac{n-1}{2n} - \alpha} > 4^{\frac{n-1}{2n}} (\log N)^{(\beta -1)}. 
\label{N-large} 
\end{equation} 
 Let us choose $\om = U_{N_1}\times U_{N_2} \times \cdots U_{N_n}$, where  $N_2 = N_3 = \cdots = N_n$, and  
\begin{equation} \label{size-N1N2}
\begin{split}
  \frac{N_j}{2} \leq N^{\frac{2 \alpha}{n-1}} (\log N)^{\frac{2(\beta -1)}{n-1}} &\leq 2N_j \text{ for } j \geq 2 \; \text{ and } \\
   \frac{N_1}{2} \leq N^{1- 2\alpha} (\log N)^{2(1-\beta)} &\leq 2N_1.  
  \end{split}
\end{equation} 
The condition \eqref{N-large} implies that $N_1 > N_2 = N_3 =  \cdots = N_n \geq 1$, so that the requirements of Theorem \ref{thm:prod} are met. It also implies that $\#(\Omega) = N_1 N_2 \cdots N_n$ satisfies \eqref{size-Omega}.  In addition, the assumptions in \eqref{size-N1N2} ensure that  
\begin{equation}  \frac{N_1}{4N_2} \leq N^{1- \frac{2 \alpha n}{n-1}} (\log N)^{\frac{2n}{n-1} (1-\beta) } \leq \frac{4N_1}{N_2}. \label{log-size}  \end{equation}  
From the inequalities in \eqref{size-N1N2} and \eqref{log-size}, and invoking the size restrictions on $N$ provided by \eqref{N-large}, we can thus find a constant $C = C(\alpha, n) > 0$ such that 
\begin{equation}  C^{-1} N^{\alpha} (\log N)^{\beta} \leq \bigl[ \prod_{k=2}^{n}N_k \bigr]^{\frac{1}{2}} \log \Bigl( \frac{N_1}{N_2} + 1\Bigr) \leq C N^{\alpha} (\log N)^{\beta}  \label{prod-estimate}\end{equation} 
According to Theorem \ref{thm:prod}, $||H_{\Omega}||_{2 \rightarrow 2}$ is of size comparable to the middle term, and hence to all three terms in \eqref{prod-estimate}. This leads to the desired conclusion \eqref{norm-MDHT}.  
\end{proof}


\subsection{Improved estimates for the maximal directional Hilbert transform in $\mathbb R^3$} 
In the special case when $n=2$, i.e., in dimension 3, some of the results in this paper can be sharpened. The first such example is an improvement of Theorem \ref{thm:highd}. 
\begin{theorem} \label{thm:3dgen} Suppose that $h:[2,\infty) \to [2,\infty)$ is an increasing function, $h(N) \nearrow \infty$, such that 
\begin{equation} \label{conditions-on-h}  \lim_{N\to \infty} \frac{h(N)}{\log N} = 0, \;\; \text{ and } \;\; \lim_{N\to \infty} \frac{h \left((\log N)^4 \right) }{h(N)} = 0. \end{equation} 
Then there is a positive constant $C$ depending only on $h$ such that for every $N\geq 2$, and any direction set $\Omega \subseteq \mathbb R^2$ with $\#(\Omega) = N$, we have 
\begin{equation}  \norm{ H_\om}_ {L^2(\mathbb{R}^3) \to L^2(\mathbb{R}^3)} \leq C N^{1/4} h(N). \label{eqn:3dgen} \end{equation} 
\end{theorem}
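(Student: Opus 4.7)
The plan is to prove Theorem \ref{thm:3dgen} by induction on $N$, combining the Guth--Katz polynomial partitioning theorem with the almost-orthogonality principle (Theorem \ref{thm:ortho}) and the sharp on-variety bound (Theorem \ref{thm:3d}). The structure parallels the proof of Theorem \ref{thm:highd} in the case $n=2$, but the $N^\epsilon$ loss is traded for the slow factor $h(N)$ by exploiting the two hypotheses in \eqref{conditions-on-h}.

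First I would set up the induction. Let $F(N) := \sup\{\norm{H_\Omega}\op : \Omega \subset \R^2,\ \#(\Omega) \leq N\}$. The goal is to show $F(N) \leq C_h N^{1/4} h(N)$ for a constant $C_h$ depending only on $h$. Given a direction set $\Omega$ of cardinality $N$, invoke Theorem \ref{thm:GK} at degree $D := \lfloor (\log N)^2 \rfloor$, producing a real polynomial $P$ of degree $\leq D$ such that $\R^2 \setminus Z(P)$ decomposes into $\lesssim D^2$ open connected components $\{O_j\}$, each containing at most $CN/D^2$ points of $\Omega$.

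Next I would use the elementary triangle inequality
\begin{equation*}
\norm{H_\Omega}\op \leq \norm{H_{\Omega \cap Z(P)}}\op + \norm{H_{\Omega \setminus Z(P)}}\op.
\end{equation*}
For the on-variety piece, $\Omega \cap Z(P)$ lies in a real algebraic variety of degree $\leq D$ and dimension $\leq 1$ in $\R^2$, so Theorem \ref{thm:3d} bounds $\norm{H_{\Omega \cap Z(P)}}\op$ by a quantity polynomial in $D$ and at most polylogarithmic in $N$; with $D = (\log N)^2$ this contribution is $o(N^{1/4} h(N))$ by the first hypothesis $h(N)/\log N \to 0$. For the off-variety piece, I would apply Theorem \ref{thm:ortho} with cells $\{O_j\}$. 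Since a line in $\R^2$ meets at most $CD$ cells (a B\'ezout-style count, cf.\ Lemma \ref{elementary-geometric-lemma}), we have $\norm{E}_{L^\infty(\mathbb{S}^2)} \leq CD$, giving
\begin{equation*}
\norm{H_{\Omega \setminus Z(P)}}\op \leq F(CD^2) + C\sqrt{D}\bigl(F(CN/D^2) + 1\bigr).
\end{equation*}

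Combining and substituting $D = (\log N)^2$ yields the recursion
\begin{equation*}
F(N) \leq F(C(\log N)^4) + C(\log N)\, F(CN/(\log N)^4) + O(\operatorname{polylog} N).
\end{equation*}
I would then iterate the recursion, setting $N_0 := N$, $N_{j+1} := CN_j/(\log N_j)^4$, $D_j := (\log N_j)^2$, and stopping when $N_K$ drops below a fixed threshold. The telescoping identity $\prod_{j<K}\sqrt{D_j} = (N/N_K)^{1/4}$ is crucial: the product of off-diagonal factors from repeated applications of almost-orthogonality exactly reproduces the target $N^{1/4}$ main term. Each diagonal contribution $\bigl(\prod_{\ell<j}\sqrt{D_\ell}\bigr) F(D_j^2)$ is controlled by the inductive hypothesis, producing terms of size comparable to $N^{1/4} h((\log N_j)^4)$. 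Summing these, together with the accumulated on-variety bounds, and applying the second hypothesis $h((\log N)^4)/h(N) \to 0$, one shows the total error is bounded by a constant multiple of $N^{1/4} h(N)$.

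The main obstacle will be closing the recursion. A naive one-step induction fails, because for any slowly varying $h$ the ratio $h(N/(\log N)^4)/h(N)$ tends to $1$ rather than $0$, leaving no slack to absorb the multiplicative constants inherent in polynomial partitioning and the $\norm{E}^{1/2}$ factor. The resolution is to iterate the recursion all the way down, exploiting the product telescoping to recover $N^{1/4}$ multiplicatively, and to use the hypothesis on $h$ at the level of summing many small diagonal corrections rather than at each single step. Careful bookkeeping will be required to ensure the constant $C_h$ does not blow up with the number of iterations, which is controlled by the fact that the dominant contribution is collected from the first few iterations, with later contributions decaying rapidly in $N_j^{-1/4}$.
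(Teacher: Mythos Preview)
Your diagnosis of the obstacle is correct: with $D=(\log N)^2$ the one-step induction fails because $h(N/(\log N)^4)/h(N)\to 1$. But the proposed resolution by iteration has a genuine gap. The telescoping identity you claim, $\prod_{j<K}\sqrt{D_j}=(N/N_K)^{1/4}$, is off by the Guth--Katz constant. With $N_{j+1}=A_1 N_j/D_j^2$ one actually gets
\[
\prod_{j<K}\sqrt{D_j}=A_1^{K/4}\,(N/N_K)^{1/4},
\]
and the paper's Theorem~\ref{thm:GK} explicitly has $A_1\geq 2$. Since the recursion $\log N_{j+1}\approx\log N_j-4\log\log N_j$ takes $K\sim \log N/(4\log\log N)$ steps to terminate, the accumulated factor is $A_1^{K/4}\sim\exp\bigl(c\log N/\log\log N\bigr)$. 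This is not $O(h(N))$ for admissible $h$ such as $h(N)=\log\log N$, so the induction does not close. Your closing remark about ``later contributions decaying rapidly in $N_j^{-1/4}$'' addresses only the diagonal terms $F(CD_j^2)$, not this main product, which carries the full constant loss at every step.

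The paper's proof fixes this by choosing the partitioning degree to depend on $h$: set $\omega(N)=h(N)/\log N$ and take $d_0=\sqrt{N}\,\omega(N)^2$. Then each cell contains at most $A_1/\omega(N)^4<(\log N)^4$ points, so the second hypothesis in \eqref{conditions-on-h} applies directly to $\max_j\|H_{\Omega_j}\|$, while $\#\mathcal O\leq A_1\omega(N)^4 N$ is a small multiple of $N$ thanks to the first hypothesis. Both gains are enough to close a \emph{single-step} induction with no iteration, and the on-variety piece from Theorem~\ref{thm:3d} gives exactly $A\sqrt{d_0}\log N=AN^{1/4}h(N)$. The moral is that the slow factor $h$ must enter already in the choice of partitioning degree; any choice of $D$ independent of $h$ leads either to a failed one-step induction or to an iterated constant blow-up.
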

\noindent {\em{Remark: }} We present the proof of Theorem \ref{thm:3dgen} in section \ref{sec:3d}. 
\vskip0.1in  
\noindent The improvement obtained in Theorem \ref{thm:3dgen} relies, in turn, on an estimate for $H_{\Omega}$ where $\Omega$ is an algebraic set in $\mathbb R^2$.  More precisely, suppose that  $\mathcal P_d(2)$ denotes the collection of all real polynomials $P \in \mathbb R[x_1, x_2]$ of degree at most $d$ such that $P \not\equiv 0$, and let $Z_{\mathbb R}(P) := \{ x = (x_1, x_2) \in \R^2: P(x) = 0 \}$. As in \eqref{CND}, let us define 
\begin{equation} \label{C-star-N-d}
\mathfrak C_2^{\ast}(N; d) := \sup \left\{  \norm{H_\om}\opn{3} \Bigl| \; \begin{aligned}  &\exists P \in \mathcal P_d(2), \text{ such that } \\  &\om \subset Z_{\mathbb R}(P), \; \# \om \leq N  \end{aligned} \right\}.    
\end{equation} 
\begin{theorem} \label{thm:3d} There is a positive absolute constant $A$ such that for any $d\geq 1$ and $N\geq 3,$ 
\begin{equation} \label{Zp-3d} \mathfrak C_2^{\ast}(N; d)   \leq A \sqrt{d} \log N. \end{equation} 
\end{theorem}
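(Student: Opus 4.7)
I propose to prove Theorem~\ref{thm:3d} by induction on $N$, using the almost-orthogonality principle (Theorem~\ref{thm:ortho}) with cells chosen as sub-arcs of the real algebraic curve $V = Z_{\mathbb{R}}(P) \subset \mathbb{R}^2$. The key observation is that since each cell lies inside $V$, and $V$ has degree $d$, Bezout's theorem bounds the number of cells that any line can meet: for a.e.\ $u \in \mathbb{S}^2$, $|Z(P_u) \cap V| \leq d$, and each intersection lies in at most one cell, so $\norm{E}_{L^\infty(\mathbb{S}^2)} \leq d$.

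\textbf{Cell construction.} The cells $O_j$ should each contain at most two consecutive points of $\Omega$. Within each connected component of $V$, I would use a continuous parameterization (away from singularities) to order the points of $\Omega$ and pair them consecutively; the cell is then the sub-arc of $V$ joining each pair, a connected subset of $\mathbb{R}^2$. Alternatively, polynomial partitioning on the variety $V$ (Theorem~\ref{thm:polyvar}) produces a polynomial $Q$ whose zero set on $V$ consists of roughly $N/2$ separator points placed between consecutive pairs and avoiding $\Omega$ itself; the cells are then the connected components of $V \setminus Z(Q)$. Either way, $\max_j \norm{H_{\Omega_j}}_{2\to 2} \leq 2$, and the representatives $\mathcal{O} \subset V$ number at most $\lceil N/2 \rceil$.

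\textbf{Recursion.} With these cells, Theorem~\ref{thm:ortho} gives the estimate
\[ \mathfrak{C}_2^{\ast}(N; d) \leq \mathfrak{C}_2^{\ast}(\lceil N/2 \rceil; d) + 3 \sqrt{d}, \]
which iterates $O(\log N)$ times to yield $\mathfrak{C}_2^{\ast}(N; d) \leq A \sqrt{d} \log N$ for an absolute constant $A > 0$.

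\textbf{Main obstacle.} The trickiest step is the cell construction when $V$ has many connected components (up to $O(d^2)$ by Harnack--Thom--Milnor type bounds). Naively pairing within each component leaves singletons in components with odd $|\Omega \cap C|$, introducing an $O(d^2)$ correction that stalls the recursion once $N \lesssim d^2$. Overcoming this requires either a careful execution of the polynomial partitioning on $V$ (producing exactly 2-point cells and thereby bypassing the component structure), or an auxiliary induction on the degree via factorization $P = P_1 P_2$ combined with the square-function estimate $\norm{H_\Omega}^2 \leq \norm{H_{\Omega_1}}^2 + \norm{H_{\Omega_2}}^2$ for $\Omega_i \subset Z_{\mathbb{R}}(P_i)$, reducing matters to the irreducible case. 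The resulting small-$N$ base case may then be handled directly or via Theorem~\ref{thm:highd} with a suitably chosen $\epsilon = \epsilon(d)$.
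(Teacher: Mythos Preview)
Your plan for the regime $d^2 \lesssim N$ is essentially the paper's Proposition~\ref{lem:3d}: choose the cells $O_j$ as sub-arcs of $Z_{\mathbb R}(P)$, use B\'ezout to get $\|E\|_\infty \le d$, and obtain the recursion $\mathfrak C_2^\ast(N;d) \le \mathfrak C_2^\ast(N/2;d) + O(\sqrt d)$. You have also correctly located the obstruction: the $O(d^2)$ connected components of $Z_{\mathbb R}(P)$ force an additive $O(d^2)$ loss in the cardinality of $\mathcal O$, so the recursion stalls once $N \lesssim d^2$.

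However, none of your three proposed fixes for the large-$d$ regime closes the gap. Your square-function bound $\|H_\Omega\|^2 \le \|H_{\Omega_1}\|^2 + \|H_{\Omega_2}\|^2$ is indeed valid (it follows pointwise from $\max(a,b)^2 \le a^2+b^2$) and does reduce matters to irreducible $P$; but an irreducible real plane curve of degree $d$ can still have on the order of $d^2$ real connected components (Harnack's bound), so the obstruction survives intact in the irreducible case. Polynomial partitioning on $V$ via Theorem~\ref{thm:polyvar} produces a polynomial $Q$ such that each cell of $V(\mathbb R)\setminus Z(Q)$ contains few points of $\Omega$, but it gives no control on how many of those cells contain exactly one point, and singleton cells do not shrink $\#\mathcal O$ below $N$. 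Finally, invoking Theorem~\ref{thm:highd} with $\epsilon = \epsilon(d)$ for the base range $N \lesssim d^2$ forces the constant $C_\epsilon$, and hence $A$, to depend on $d$.

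The paper handles $d^2 > cN$ by a \emph{degree-reduction} step rather than by further analysis of $Z_{\mathbb R}(P)$. One applies Guth--Katz partitioning (Theorem~\ref{thm:GK}) in the ambient plane $\mathbb R^2$ with a polynomial $P_0$ of degree $D \sim c_0\sqrt N \ll d$. The piece $\Omega^{\ast\ast} = \Omega\cap Z(P_0)$ lies on a curve of degree $D<d$, so one inducts on the \emph{degree}. The piece $\Omega^\ast = \Omega\setminus Z(P_0)$ is treated by Theorem~\ref{thm:ortho} with cells the components of $\mathbb R^2\setminus Z(P_0)$; the representatives $\mathcal O$ satisfy $\#\mathcal O \le A_1 D^2 \ll N$, and since any $M_0$ points lie on some curve of degree $2\sqrt{M_0}$ one has $\mathfrak C_2^\ast(M_0;d) = \mathfrak C_2^\ast(M_0; 2\sqrt{M_0})$, again amenable to induction. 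The argument is therefore organized as a lexicographic induction on the pair $(N,d)$, not on $N$ alone; this double induction is the idea your proposal is missing.
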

\noindent {\em{Remark: }} 
\begin{enumerate}[1.]
\item Setting $n=2$ and $m=1$ in Theorem \ref{thm:higherdim}  gives that $\mathfrak C_2^{\ast}(N; d) \leq C(\epsilon, d) N^{\epsilon}$.  In this sense, Theorem \ref{thm:3d} may be viewed as an improvement of Theorem \ref{thm:higherdim} in the case $n=2$; it quantifies the dependence on $d$ and replaces $N^{\epsilon}$ by $\log N$.    

\vskip0.1in
\item It is well-known \cite[Lemma 2.4]{Guth}  that for any set of $N$ points in $\mathbb \R^2$, there exists a nontrivial polynomial $P$ of degree $\leq d$ that vanishes on this set, provided $N \leq \binom{d+1}{2}$. In particular, given any $\Omega \subseteq \mathbb R^2$ of cardinality $N$, we can always choose $P \in \mathcal P_d(2)$ with $d = 2 \sqrt{N}$ such that $\Omega \subseteq Z(P)$. Thus, while the estimate \eqref{Zp-3d} is ostensibly for all $N \geq 2$ and $d \geq 1$, in practice one has the relation 
\begin{equation} \label{C2-dlarge-N} 
\mathfrak C_{2}^{\ast}(N; d) = \mathfrak C_{2}^{\ast}(N; 2 \sqrt{N}), \quad \text{ for all }  d \geq 2\sqrt{N}, 
\end{equation} 
which offers a better bound; namely $\mathfrak C_2^{\ast}(N, d) \leq A \sqrt{2} N^{1/4} \log N$. One may therefore rephrase \eqref{Zp-3d} as follows: \[  \mathfrak C_2^{\ast}(N; d)   \leq A \bigl[\min(d, \sqrt{N}) \bigr]^{\frac{1}{2}} \log N. \]  
\vskip0.1in
\item In view of the previous remark, \eqref{Zp-3d} already leads to Theorem \ref{thm:highd} for $n=2$ with an improvement. Specifically, for any $\Omega \subseteq \mathbb R^2$ of cardinality $N$, Theorem \ref{thm:3d} gives that 
\begin{equation}\label{eqn:bound0}
||H_{\Omega}||_{2 \rightarrow 2} \leq C N^{1/4} \log N.
\end{equation}
While the conclusion of Theorem \ref{thm:3dgen} is stronger, we will see that it uses Theorem \ref{thm:3d} as a crucial ingredient.  
\vskip0.1in

\item Theorem \ref{thm:3d} is sharp in the sense that it fails to hold if either the exponent $1/2$ of $d$ or the exponent 1 of $\log N$ is replaced by a smaller quantity. The sharpness of the exponent $1/2$ of $d$ follows from the sharpness of the bound \eqref{eqn:bound0} (up to the factor of $\log N$) in view of \eqref{uniform-2}. Moreover, we observe that the direction set $\Omega = U_N \times \{0\}$ meets the requirement of Theorem \ref{CDR-reproof} with $n=2$, $P(x_1, x_2) = x_2$ and $d = 1$. Invoking Lemma \ref{lem:slice} and \cite[Theorem 1]{Kim}, we find that $||H_{\Omega}||_{2 \rightarrow 2} \geq c \log N$ for some constant $c > 0$. 

\vskip0.1in 
\item We ask the reader to compare the statements of Theorem \ref{thm:3d} and Theorem \ref{CDR-reproof} in dimension 3. They looks 
similar, but each encodes information not completely captured by the other. On one hand, any curve without self-intersections that is implicitly defined by a polynomial $P \in \mathcal P_d(2)$ can be intersected by a hyperplane in  at most $d$ points, and hence is in $\mathcal G_d$. However, a general curve in $\mathcal G_d$ need not be given by the zero set of a polynomial. On the other hand, the zero set of a polynomial $P \in \mathcal P_d(2)$ is in general a union of points and curves, and need not always obey the requirements of Theorem \ref{CDR-reproof}.   
\vskip0.1in 

\item The proof of Theorem \ref{thm:3d} appears in section \ref{proof:thm3d:section}. 
\end{enumerate}

\section{The almost-orthogonality principle: Proof of Theorem \ref{thm:ortho}} \label{sec:ortho}
\noindent Let us write $\vec{v} :=\inn{v,1}$ for $v\in \R^n$, and denote by $\vec{v}^\perp$ the hyperplane orthogonal to $\vec{v}$, i.e., $\vec{v}^{\perp} := \{ x \in \R^{n+1} : \vec{v} \cdot x = 0\}$ . Given $v_1,v_2\in \R^{n}$, let \[ \mathfrak L(v_1,v_2) :=\{ (1-t)v_1 + tv_2 \in \R^n : t\in [0,1] \} \]  be the finite line segment between $v_1$ and $v_2$. Throughout this section, we let $\Omega$, $O_j$, $\Omega_j$, $v_j$ and $\mathcal{O}$ be as in the statement of Theorem \ref{thm:ortho}. 
\subsection{Ingredients of the proof} 
\begin{lem} \label{lem:supp} The multiplier for the operator $(H_{v_1}  - H_{v_2})$ is supported in the set $\bigcup_{v\in \mathfrak L(v_1,v_2)} \vec{v}^\perp $.
\end{lem}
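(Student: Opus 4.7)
The plan is to compute the Fourier multiplier of $H_{v_1} - H_{v_2}$ explicitly and apply the intermediate value theorem.

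First, I would recall from the standard theory of the univariate Hilbert transform that $H_v$, as defined in \eqref{DHT-single}, is a Fourier multiplier operator on $\mathbb{R}^{n+1}$ with symbol
\[ m_v(\xi) = -i\, \sgn(\vec{v} \cdot \xi), \qquad \xi \in \mathbb{R}^{n+1}, \]
where $\vec{v} = \langle v, 1 \rangle$. This is a direct consequence of the identity $\widehat{\mathrm{p.v.}\tfrac{1}{\pi t}}(\tau) = -i\,\sgn(\tau)$ on $\mathbb{R}$ and the fact that $H_v$ acts as the univariate Hilbert transform along the one-dimensional subspace spanned by $\vec{v}$, with the remaining variables serving merely as parameters.

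Next, I would write the multiplier of the difference as
\[ m_{v_1}(\xi) - m_{v_2}(\xi) = i \bigl[ \sgn(\vec{v}_2 \cdot \xi) - \sgn(\vec{v}_1 \cdot \xi) \bigr], \]
and observe that this expression vanishes unless the two quantities $\vec{v}_1 \cdot \xi$ and $\vec{v}_2 \cdot \xi$ either have opposite strict signs or at least one of them is zero. Fix $\xi$ in the support of $m_{v_1} - m_{v_2}$, and consider the continuous function
\[ \varphi(t) := \vec{v}(t) \cdot \xi, \qquad \vec{v}(t) := \langle (1-t)v_1 + tv_2, 1 \rangle, \quad t \in [0,1]. \]
Since $\varphi$ is an affine (hence continuous) function of $t$ with $\varphi(0) = \vec{v}_1 \cdot \xi$ and $\varphi(1) = \vec{v}_2 \cdot \xi$ either of opposite signs or having a zero endpoint, the intermediate value theorem yields some $t^* \in [0,1]$ with $\varphi(t^*) = 0$. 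Setting $v^* := (1-t^*)v_1 + t^* v_2 \in \mathfrak{L}(v_1, v_2)$, this reads $\xi \in \vec{v}^{*\perp}$, proving that $\xi$ belongs to $\bigcup_{v \in \mathfrak{L}(v_1,v_2)} \vec{v}^\perp$.

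There is no serious obstacle here; the argument is a short direct computation followed by a one-line application of the intermediate value theorem. The only mild subtlety is handling the boundary case where one of $\vec{v}_1 \cdot \xi$ or $\vec{v}_2 \cdot \xi$ is zero (so that $\sgn$ of one side vanishes rather than flips), but this is absorbed cleanly by allowing $t^* \in \{0, 1\}$ in the intermediate value step, so that $v^* \in \{v_1, v_2\} \subseteq \mathfrak{L}(v_1, v_2)$.
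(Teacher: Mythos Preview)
Your proof is correct and follows essentially the same approach as the paper: compute the multiplier as a difference of signs, and use the intermediate value theorem on the affine function $t \mapsto \langle (1-t)v_1 + tv_2, 1\rangle \cdot \xi$ to locate $v^\ast \in \mathfrak L(v_1,v_2)$ with $\xi \in \vec{v}^{\ast\perp}$. The paper's version is slightly terser (it does not separately discuss the boundary case), but the arguments are otherwise identical.
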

\begin{proof}
The multiplier for the operator $(H_{v_1}  - H_{v_2})$ equals 
\[ -i \left(\sgn(\xi \cdot \vec{v}_1 ) - \sgn(\xi \cdot \vec{v}_2 )\right) .\] 
Suppose that $\sgn(\xi \cdot \vec{v}_1 ) - \sgn(\xi \cdot \vec{v}_2 ) \neq 0$. Without loss of generality, we may assume that $\xi \cdot \vec{v}_1  \geq 0$ and $\xi\cdot \vec{v}_2 <0$. Therefore there exists $t\in [0,1]$ such that 
\[ (1-t) \xi \cdot \vec{v}_1 + t \xi\cdot \vec{v}_2 = 0, \quad \text{ i.e., } \quad \xi \cdot \bigl[ (1-t) \vec{v}_1 + t \vec{v}_2 \bigr] = 0. \]
It follows that $\xi \in \vec{v}^\perp$, where $v = (1-t)v_1 + t v_2  \in \mathfrak L(v_1, v_2)$.
\end{proof}


\begin{lem} \label{cor:opnorm} Set 
\begin{equation} \label{def-Wj}
W_j := \bigcup_{v \in \mathfrak L_j } \vec{v}^\perp ,
\end{equation} 
where $\mathfrak L_j $ is the union of line segments
\begin{equation} \label{def-Lj} 
\mathfrak L_j := \bigcup_{v \in \Omega_j} \mathfrak L(v,v_j).
\end{equation} 
Let $R_{W_j} f$ be the Fourier restriction operator 
\begin{equation} \label{def-RWj}
\widehat{R_{W_j} f} := \ind_{W_j} \wh{f},
\end{equation} 
where $\mathbf 1_W$ stands for the indicator function of $W$.  Then the following pointwise bound holds: 
\begin{equation}  H_\om f(x)  \leq H_\cO f(x) + \max_j H_{\om_j} \circ R_{W_j}f (x) + \max_j |H_{v_j} \circ R_{W_j}f (x)|. \label{Hom-pointwise-bound} \end{equation} 
\end{lem}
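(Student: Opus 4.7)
The plan is to decompose each single-direction Hilbert transform $H_v$ for $v\in\Omega_j$ into two pieces: the difference $H_v - H_{v_j}$ (whose multiplier is Fourier-localized by Lemma \ref{lem:supp}) plus the ``centered'' operator $H_{v_j}$, which contributes to $H_{\cO}$. Passing to suprema over $v\in \Omega_j$ and then over $j$ will yield the desired pointwise inequality \eqref{Hom-pointwise-bound}.

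More precisely, I first fix $j$ and any $v\in \Omega_j$ and write
\[ H_v f(x) = (H_v - H_{v_j}) f(x) + H_{v_j} f(x). \]
By Lemma \ref{lem:supp}, the Fourier multiplier of $H_v - H_{v_j}$ is supported in the set $\bigcup_{w\in \mathfrak L(v,v_j)} \vec{w}^\perp$. Since $v\in\Omega_j$, we have $\mathfrak L(v,v_j) \subseteq \mathfrak L_j$, hence this multiplier support is contained in $W_j$. Consequently, $(H_v - H_{v_j}) f = (H_v - H_{v_j}) R_{W_j} f$, since inserting $R_{W_j}$ does not change a function whose Fourier transform is already supported in $W_j$ (equivalently, the composition of multipliers is unchanged). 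A further split by the triangle inequality then gives the pointwise estimate
\[ |H_v f(x)| \leq |H_v R_{W_j} f(x)| + |H_{v_j} R_{W_j} f(x)| + |H_{v_j} f(x)|. \]

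Taking the supremum over $v\in\Omega_j$ of the first term yields $H_{\Omega_j}(R_{W_j} f)(x)$ by definition \eqref{def-MDHT}; the second and third terms do not depend on $v$. Finally, taking the supremum over $j$ and noting that $|H_{v_j} f(x)| \leq H_{\cO} f(x)$ since $v_j\in\cO$, we obtain
\[ H_\Omega f(x) = \max_j \sup_{v\in\Omega_j} |H_v f(x)| \leq H_{\cO} f(x) + \max_j H_{\Omega_j}(R_{W_j} f)(x) + \max_j |H_{v_j} R_{W_j} f(x)|, \]
which is precisely \eqref{Hom-pointwise-bound}.

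The only subtlety in this plan is verifying that the Fourier support of $(H_v - H_{v_j})$ lies inside $W_j$ and that the resulting identity $(H_v-H_{v_j})f = (H_v-H_{v_j}) R_{W_j} f$ holds pointwise in $x$; both of these follow directly from Lemma \ref{lem:supp} together with the definitions \eqref{def-Wj}--\eqref{def-RWj}, so I do not expect any serious obstacle. Everything else is a straightforward application of the triangle inequality and sup-over-$v$, sup-over-$j$ manipulations.
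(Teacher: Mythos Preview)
Your proof is correct and follows essentially the same approach as the paper: decompose $H_v f$ via $H_{v_j}$, use Lemma~\ref{lem:supp} to insert the Fourier projection $R_{W_j}$ into the difference $(H_v - H_{v_j})f$, then apply the triangle inequality and pass to suprema over $v\in\Omega_j$ and over $j$. The paper's argument is identical in structure and detail.
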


\begin{proof}
Suppose that $v\in \om_j$. By Lemma \ref{lem:supp}, we observe that
\[ (H_v - H_{v_j}) f = (H_v - H_{v_j}) R_{W_j} f. \]
Therefore, for $v\in \om_j$, 
\begin{align*}
|H_v f(x)| &\leq |H_{v_j} f(x)|  + |(H_v - H_{v_j}) R_{W_j} f(x)| \\
 &\leq H_{\cO} f(x)  + |H_v R_{W_j} f(x)| + |H_{v_j} R_{W_j} f(x)|.
 \end{align*}
Fixing the index $j$ and taking the supremum of both sides of the inequality above over $v\in \om_j$, we obtain
\[ H_{\om_j} f(x) \leq H_\cO f(x) + H_{\om_j} R_{W_j} f(x) + |H_{v_j} R_{W_j} f(x)|.\]
Taking the maximum over $j$ then finishes the proof of the pointwise bound \eqref{Hom-pointwise-bound}. 
\end{proof} 

\begin{lem} \label{lem:inters}
Let $P_u$ be defined as in \eqref{def-Pu}, with $u \in \mathbb S^n$,  and let $j$ be an index. If $Z(P_u)$ intersects the union of line segments $\mathfrak L_j$ given by \eqref{def-Lj}, then $Z(P_u)$ intersects $O_j$.
\end{lem}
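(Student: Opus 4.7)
The argument is a straightforward contrapositive that hinges on two facts: (i) the function $P_u$ is \emph{affine} in $y$, so its restriction to any line segment is affine in the parameter; and (ii) $O_j$ is connected by hypothesis, so continuous nonvanishing functions have constant sign on $O_j$ (this is precisely the property of connectedness flagged in the first remark after Theorem \ref{thm:ortho}).

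The first step is to record the affineness computation: for any $v \in \Omega_j$ and any $t \in [0,1]$, one checks directly from the definition $P_u(y) = u \cdot \langle y,1\rangle$ that
\begin{equation*}
P_u\bigl((1-t)v + tv_j\bigr) = (1-t)P_u(v) + t P_u(v_j).
\end{equation*}
In other words, the value of $P_u$ along the segment $\mathfrak L(v,v_j)$ is a convex combination of the endpoint values.

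The second step is the contrapositive. Assume $Z(P_u) \cap O_j = \emptyset$. Since $P_u$ is a continuous function that vanishes nowhere on the connected set $O_j$, it must have constant sign on $O_j$; say $P_u > 0$ on $O_j$ (the other case is symmetric). Because $\Omega_j \subseteq O_j$ and $v_j \in O_j$, we have $P_u(v) > 0$ and $P_u(v_j) > 0$ for every $v \in \Omega_j$. Substituting into the affine identity from step one, the convex combination $(1-t)P_u(v) + tP_u(v_j)$ is strictly positive for every $t \in [0,1]$, so $Z(P_u)$ misses $\mathfrak L(v,v_j)$. Taking the union over $v \in \Omega_j$, we conclude $Z(P_u) \cap \mathfrak L_j = \emptyset$, which is the desired contrapositive.

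No serious obstacle is anticipated; the only subtle point is making sure the affineness of $P_u$ is invoked correctly, and that connectedness of $O_j$ is used exactly in the way the author flagged after Theorem \ref{thm:ortho}.
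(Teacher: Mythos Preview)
Your proposal is correct and follows essentially the same approach as the paper's proof: both argue the contrapositive, use connectedness of $O_j$ to force $P_u$ to have constant sign there, and then exploit the affineness of $P_u$ along each segment $\mathfrak L(v,v_j)$ to conclude that the sign persists on all of $\mathfrak L_j$. The only cosmetic difference is that you write the convex-combination identity $P_u((1-t)v + tv_j) = (1-t)P_u(v) + tP_u(v_j)$ explicitly, whereas the paper phrases it as ``$Q(s)$ is affine linear and positive at the endpoints.''
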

\begin{proof}
We prove the contrapositive. Suppose that $Z(P_u)$ does not intersect $O_j$. Since $O_j$ is connected, it must therefore lie inside exactly one of the half-spaces $Z_{\pm}(P_u)$, where  \begin{align*} Z_+(P_u) &:= \{ y\in \R^{n} : P_u(y) >0 \}, \\ Z_-(P_u) &:= \{ y\in \R^{n} : P_u(y) <0 \}. 
\end{align*} 
Without loss of generality, suppose that $O_j \subset Z_+(P_u)$. We will show in the paragraph below that $\mathfrak L_j \subset Z_+(P_u)$, i.e., $P_u(v)>0$ for every $v\in \mathfrak L_j$. This in turn will show that $Z(P_u)$ does not intersect $\mathfrak L_j$, establishing the desired conclusion. 
\vskip0.1in
\noindent To this end, choose any $v\in \mathfrak L_j$. Then there exists $v_j'\in \om_j$ such that $v\in \mathfrak{L}(v_j',v_j)$. Thus, there exists $t\in [0,1]$ such that $v=(1-t)v_j' + tv_j$. Since $v_j',v_j \in O_j \subset Z_+(P_u)$, 
it follows that $P_u(v_j')$ and $P_u(v_j)$ are positive. Equivalently stated, the function
\begin{equation*}
s \in \mathbb R \longmapsto Q(s) := P_u((1-s) v_j' + s v_j) = \langle (1-s) v_j' + sv_j, 1 \rangle \cdot u  
\end{equation*} 
takes positive values at $s = 0$ and $s=1$. Since $Q$ is an affine linear function of $s$, it follows that the positivity is preserved for all intermediate values of $s$, in particular for $s = t$. Thus $Q(t) = P_u(v)>0$, as desired. 
\end{proof}

\subsection{Completion of the proof of Theorem \ref{thm:ortho}} 
\begin{proof} 
Let $W_j$ and $R_{W_j}$ be as in \eqref{def-Wj} and \eqref{def-RWj} respectively, so that the conclusion of Lemma \ref{cor:opnorm} holds. Applying the triangle inequality for the $L^2(\mathbb R^{n+1})$ norm on the pointwise estimate \eqref{Hom-pointwise-bound}, we arrive at:
\begin{equation} \label{after-triangle}
||H_{\Omega}f||_{2} \leq ||H_\cO f ||_2 + ||\max_j H_{\om_j} \circ R_{W_j}f||_2 + ||\max_j |H_{v_j} \circ R_{W_j}f| ||_2.
\end{equation} 
The first summand on the right hand side above corresponds exactly with the same in \eqref{almost-ortho}, so we focus on estimating the second and third summands in \eqref{after-triangle}.  
\vskip0.1in
\noindent For both the second and the third term, we bound the maximum in $j$ by the $l^2$ sum;
\begin{align*} 
\norm{\max_j H_{\om_j} R_{W_j} f}_{2}^2 &\leq \sum_j \norm{H_{\om_j} R_{W_j} f}_{2}^2 \leq \max_j  ||H_{\om_j}||_{2 \rightarrow 2}^2  \sum_j \norm{R_{W_j} f}_{2}^2, \\
\norm{\max_j |H_{v_j} R_{W_j} f|}_{2}^2 &\leq \sum_j \norm{H_{v_j} R_{W_j} f}_{2}^2 \leq \max_j  ||H_{v_j}||_{2 \rightarrow 2}^2  \sum_j \norm{R_{W_j} f}_{2}^2.
\end{align*}
Since $\max_j ||H_{v_j}||_{2 \rightarrow 2} = 1$, the desired estimate \eqref{almost-ortho} will follow if we are able to show that 
\begin{equation} \label{RW-E}
\sum_j ||R_{W_j}f||_2^2 \leq ||E||_{L^\infty(\mathbb S^n)} ||f||_2^2, 
\end{equation} 
where $E$ is as in the statement of Theorem \ref{thm:ortho}.
\vskip0.1in 
\noindent We set about proving \eqref{RW-E}. By Plancherel's theorem, 
\begin{equation}  \sum_j \norm{R_{W_j} f}_{L^2}^2 = \sum_j \norm{\ind_{W_j} \widehat{f}}_{L^2}^2 \leq \norm{\sum_j \ind_{W_j} }_{L^\infty(\mathbb S^{n})} \norm{f}_{L^2}^2, \label{Plancherel} \end{equation} 
where we have used the fact that $\ind_{W_j}$ is homogeneous of degree 0 in the last inequality. Our claim is that for every $u \in \mathbb S^n$
\begin{align}\label{eqn:E}
\sum_j \mathbf 1_{W_j}(u) = \#\{ j : u \in W_j\} &\leq E(u), \text{ as a result of which }  \\
\bigl|\bigl| \sum_j \ind_{W_j} \bigr|\bigr|_{L^\infty(\mathbb S^{n})} &\leq \norm{E}_{L^\infty(\mathbb S^{n})}. \label{sup-norm-bounds}
\end{align}
Indeed, suppose that $u \in W_j\cap \mathbb S^n$ for some $j$. From the definition of $W_j$, it follows that $u \in \vec{v}^\perp$ for some $v\in \mathfrak L_j$, i.e., $ u\cdot \vec{v} = P_u(v) =0$. Thus $Z(P_u)$ intersects the line segment $\mathfrak L_j$ defined by \eqref{def-Lj}. By Lemma \ref{lem:inters}, we conclude that $Z(P_u)$ intersects $O_j$ as well. This implies that \[ \{j : u \in W_j \} \subseteq \{j : Z(P_u) \cap O_j \ne \emptyset \}, \] which leads to the estimate claimed in \eqref{eqn:E}. 
Substituting \eqref{sup-norm-bounds} into \eqref{Plancherel} yields \eqref{RW-E}, completing the proof. 
\end{proof} 

\section{Direction sets given by Cartesian products: Proof of Theorem \ref{thm:prod}}  \label{sec:prod}
\noindent Theorem \ref{thm:prod} consists of two statements. Section \ref{proof-prod-norm} below gives the proof of the upper bound \eqref{eqn:prod:norm}. Section \ref{sharpness-prod-section} establishes the sharpness of this bound.    
\subsection{Proof of \eqref{eqn:prod:norm}} \label{proof-prod-norm} 
We use a refinement of the idea that appeared in the new proof of Theorem \ref{Kim-theorem}. The proof is based on induction on $m$, where $1 \leq m \leq n$ is an index such that $N_k = 1$ for $k > m$. The case $m = n$ corresponds to the situation when $N_n > 1$.  
\vskip0.1in
\noindent Let us start with the base case $m=1$. This corresponds to the estimate 
\begin{equation}  \mathfrak C_{\text{prod}}(N_1, 1, \cdots, 1; n) \leq C \log(N_1+1). \label{base-case-m1}  \end{equation} 
For $n=1$, this is the well-known estimate \eqref{eqn:2d}. For $n > 1$, we invoke a standard slicing argument that has been proved in Lemma \ref{lem:slice}, with $n$ and $l$ in that lemma replaced by 1 and $(n-1)$ respectively. This shows that \[ \mathfrak C_{\text{prod}}(N_1, 1, \cdots, 1; n) = \mathfrak C_{\text{prod}}(N_1; 1), \] completing the verification of the base case.  
\vskip0.1in
\noindent We turn now to the induction step. This will be handled using Theorem \ref{thm:ortho}. 
Suppose that \eqref{eqn:prod:norm} has been proved (with $N_j$ replaced by $N_j'$) for every choice of integer vector $(N_1', \cdots, N_n')$ with $N_1' \geq N_2' \geq \cdots \geq N_n' \geq 1$, for which $N_k' = 1$ when $k > m-1$. The aim is to prove \eqref{eqn:prod:norm} for an integer sequence $N_1 \geq N_2 \geq \cdots N_n$ where $N_k = 1$ for $k > m$. 
\vskip0.1in
\noindent For each $1\leq k \leq m$, let $Q_{k,m}\geq 1$ be the integer part of the fraction $N_k/N_m$. Dictated by the definition \eqref{def-Cprod} of $\mathfrak C_{\text{prod}}$, we fix a direction set \[ \Omega = \prod_{k=1}^{n} \Omega_k, \quad \text{ with } \Omega_k \subseteq \mathbb R, \; \#(\Omega_k) = N_k  \text{ for $1 \leq k \leq n$}. \] Thus, for $k > m$, the set $\Omega_k$ is a singleton. For each index $k \leq m$, we order the elements of $\Omega_k \subseteq \mathbb R$ in increasing order, and pick $N_m$ disjoint open intervals $\{I(k, \ell_k): 1 \leq \ell_k \leq N_m\}$ in $\mathbb R$ of the form 
\begin{equation} 
I(k, \ell_k) = \bigl(\alpha_k(\ell_k -1), \alpha_k(\ell_k) \bigr), \quad \text{ with }  \alpha_{k}(0) = -\infty, \quad \alpha_{k}(N_m) = \infty, 
\end{equation} 
such that each $I(k, \ell_k)$ contains either $Q_{k,m}$ or $Q_{k,m}+1$ consecutive elements of $\Omega_k$. For $\vec{\ell} = (\ell_1, \cdots, \ell_m)$, we define an $m$-dimensional rectangular parallelepiped $O_{\vec{\ell}}$ as follows,
\[ O_{\vec{\ell}} := \prod_{k=1}^{m} I(k, \ell_k) \times \prod_{k = m+1}^{n} \Omega_k, \quad \text{ and set } \quad  \Omega_{\vec{\ell}} := \Omega \cap O_{\vec{\ell}}.  \]
The sets $O_{\vec{\ell}}$ are connected, and form a finite cover of $\Omega$; they will serve as the sets $O_j$ required in Theorem \ref{thm:ortho}. 
For this choice of sets, and in the notation of Theorem \ref{thm:ortho}, we claim the following: 
\begin{align}
&{\text{For every $u \in \mathbb S^n$}}, E(u) := \# \bigl\{\vec{\ell} : O_{\vec{\ell}} \cap Z(P_u) \ne \emptyset  \bigr\}  \leq C (mN_m)^{m-1}, \label{bound-on-E} \\
&\max_{\vec{\ell}} ||H_{\Omega(\vec{\ell})}||_{2 \rightarrow 2} \leq C_0 \Bigl[ \prod_{k=2}^{m-1} \frac{N_k}{N_m} \Bigr]^{\frac{1}{2}} \log \left( \frac{N_1}{N_2} + 1 \right),  \text{ and } \label{H-om-l}  \\
&\exists \text{ a set } \mathcal O \text{ with }  \#(\mathcal O \cap O_{\vec{\ell}}) = 1 \text{ for every } \vec{\ell}, \; ||H_{\mathcal O}||_{2 \rightarrow 2} \leq C N_m^{\frac{m-1}{2}}. \label{H-O} 
 \end{align} 
Assuming these estimates for now, we substitute them  into \eqref{almost-ortho} in Theorem \ref{thm:ortho} to obtain 
\begin{align*}
||H_{\Omega}||_{2 \rightarrow 2} &\leq ||H_{\mathcal O}||_{2 \rightarrow 2} + ||E||_{L^\infty(\mathbb S^n)}^{\frac12} \Bigl( \max_{\vec{\ell}} ||H_{\Omega_{\vec{\ell}}}||_{2 \rightarrow 2} + 1 \Bigr) \\ 
&\leq CN_m^{\frac{m-1}{2}} + C_0 \bigl[ C(mN_m)^{m-1} \bigr]^{\frac{1}{2}} \Bigl[ \prod_{k=2}^{m-1} \frac{N_k}{N_m} \Bigr]^{\frac{1}{2}} \log \left( \frac{N_1}{N_2} + 1 \right) \\
&\leq C_n N_m^{\frac{m-1}{2}} \Bigl[ \prod_{k=2}^{m-1} \frac{N_k}{N_m} \Bigr]^{\frac{1}{2}} \log \left( \frac{N_1}{N_2} + 1 \right) \\ 
&= C_n  \Bigl[ \prod_{k=2}^{m} N_k \Bigr]^{\frac{1}{2}} \log \left( \frac{N_1}{N_2} + 1 \right). 
\end{align*}
This completes the induction, and hence the proof of \eqref{eqn:prod:norm} up to the verification of the claims \eqref{bound-on-E}, \eqref{H-om-l} and \eqref{H-O}. We now turn to the proof of these claims.

\begin{lem} \label{bound-on-E-lemma}
In the setup described in section \ref{proof-prod-norm}, the estimate \eqref{bound-on-E} holds. 
\end{lem}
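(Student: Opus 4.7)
The plan is to reduce \eqref{bound-on-E} to the classical bound on the number of cells of an $m$-dimensional rectilinear grid that can be crossed by a hyperplane. Fix $u \in \mathbb S^n$, and let $\omega_k$ denote the unique element of $\om_k$ for $k > m$. Since each cell $O_{\vec{\ell}}$ is a singleton in its last $n-m$ coordinates, the condition $O_{\vec{\ell}} \cap Z(P_u) \ne \emptyset$ is equivalent to the $m$-dimensional box $\prod_{k=1}^m I(k,\ell_k)$ meeting the affine subspace
\[ \tilde H_u := \Bigl\{y \in \R^m : \sum_{k=1}^m u_k y_k + c_u = 0 \Bigr\}, \qquad c_u := \sum_{k=m+1}^n u_k \omega_k + u_{n+1}. \]
Thus $E(u)$ equals the number of cells of the $m$-dimensional axis-parallel grid $\{\prod_{k=1}^m I(k,\ell_k)\}$ that meet $\tilde H_u$.

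The second step is to discard a negligible set of degenerate directions. The locus $\{u \in \mathbb S^n : (u_1,\ldots,u_m) = 0\}$, together with the loci on which $\tilde H_u$ is parallel to, or contained in, one of the $m(N_m-1)$ coordinate hyperplanes $\{y_k = \alpha_k(\ell)\}$, is a finite union of proper sub-spheres of $\mathbb S^n$, hence has measure zero. Since $\|E\|_{L^\infty(\mathbb S^n)}$ is an essential supremum, it suffices to bound $E(u)$ off this exceptional set. For such generic $u$, $\tilde H_u$ is a genuine affine hyperplane of $\R^m$ meeting each coordinate hyperplane transversely.

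For such $u$, the plan is to use the polynomial-partition viewpoint already employed in Lemma \ref{elementary-geometric-lemma}. The cells of the grid are exactly the connected components of $\R^m \setminus Z(P_{\mathrm{grid}})$ for
\[ P_{\mathrm{grid}}(y) := \prod_{k=1}^m \prod_{\ell=1}^{N_m - 1}(y_k - \alpha_k(\ell)), \]
and a cell meets $\tilde H_u$ iff it contains a connected component of $\tilde H_u \setminus Z(P_{\mathrm{grid}})$. By the transversality above, $Z(P_{\mathrm{grid}}) \cap \tilde H_u$ is the union of the $m(N_m - 1)$ affine hyperplanes of $\tilde H_u \cong \R^{m-1}$ obtained as traces. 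The classical hyperplane-arrangement bound $\sum_{i=0}^{m-1}\binom{M}{i} \leq C_m M^{m-1}$ for $M$ hyperplanes in $\R^{m-1}$, applied with $M = m(N_m - 1)$, then gives $E(u) \leq C_m (mN_m)^{m-1}$ for a.e.\ $u \in \mathbb S^n$, and hence the required bound on $\|E\|_{L^\infty(\mathbb S^n)}$.

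The main point of care is the degenerate case $(u_1,\ldots,u_m) = 0$: if additionally $c_u = 0$, then $\tilde H_u$ coincides with all of $\R^m$, which inflates $E(u)$ to $N_m^m$ and violates \eqref{bound-on-E} \emph{pointwise}. It is therefore essential that only the $L^\infty$-almost-everywhere bound is required; the degenerate locus is a lower-dimensional sub-sphere of $\mathbb S^n$, so it is harmless for the essential supremum.
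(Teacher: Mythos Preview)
Your argument is correct, and it takes a genuinely different route from the paper's. The paper recasts the cells $O_{\vec{\ell}}$ as the connected components of $V(\R)\setminus Z(P)$ for the $m$-dimensional variety $V=\C^m\times\prod_{k>m}\Omega_k$ and the grid polynomial $P$, and then invokes the general Proposition~\ref{lem:boundE} (hence Theorem~\ref{thm:ST}, the Barone--Basu/Solymosi--Tao component bound) to conclude $E(u)\le R_{n,1}[\deg P]^{m-1}\le C(mN_m)^{m-1}$. You instead freeze the last $n-m$ coordinates, reduce to $\R^m$, and count regions of an explicit hyperplane arrangement in $\tilde H_u\cong\R^{m-1}$ via the classical bound $\sum_{i=0}^{m-1}\binom{M}{i}$ with $M=m(N_m-1)$. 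Your proof is thus the natural $m$-dimensional extension of the paper's own Lemma~\ref{elementary-geometric-lemma} (the case $m=2$), and is entirely self-contained, bypassing the real-algebraic-geometry machinery. The paper's route, on the other hand, shows how this special case slots into the general framework used later for arbitrary varieties in Theorem~\ref{thm:higherdim}.

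Your remark about the degenerate locus is also well taken: the paper's display~\eqref{bound-on-E} is phrased as a pointwise bound ``for every $u\in\mathbb S^n$'', but its own proof via Proposition~\ref{lem:boundE} (which relies on Lemma~\ref{lem:dim}) likewise delivers only an almost-everywhere statement, which is all that Theorem~\ref{thm:ortho} requires.
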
 
\begin{proof} 
As in the proof of Theorem \ref{Kim-theorem}, we reduce the estimation to a counting problem involving a polynomial zero set.  Let us first identify the sets $\mathcal O_{\vec{\ell}}$ as the connected components of $V(\mathbb R) \setminus Z(P)$, for an $m$-dimensional algebraic variety $V$ and an appropriately defined polynomial $P$. We choose 
\begin{align*}  &V = \mathbb C^m \times \prod_{k = m+1}^{n} \Omega_k, \quad \text{ so that } \quad V(\mathbb R) = \mathbb R^m \times \prod_{k = m+1}^{n} \Omega_k, \text{ and }  \\ 
&P(x_1, \cdots, x_n) = \prod_{k=1}^{m} \prod_{\ell_k = 1}^{N_m-1} \bigl(x_k - \alpha_k(\ell_k) \bigr), \; \text{ so that } \; \deg(P) = m(N_m-1).
\end{align*} 
Then $V$ is an algebraic variety in $\C^n$ of dimension $m$ and degree 1, given by the zero set of finitely many linear polynomials $\{P_k : m+1 \leq k \leq n\}$, where 
\[ P_{k}(x) = x_k - \omega_k \; \text{ with }  \Omega_k = \{\omega_k\}. \]  The set $Z(P)$ consists of a union of coordinate hyperplanes of the form $x_k = \alpha_k(\ell_k)$, with $1 \leq k \leq m$ and $1 \leq \ell_k < N_m$. These hyperplanes partition $V(\mathbb R) = \mathbb R^m$ into the cells $O_{\vec{\ell}}$. The quantity $E(u)$ is then the number of connected components of $V(\mathbb R) \setminus Z(P)$ that intersect $Z(P_u)$.  Proposition \ref{lem:boundE} offers a general bound for this quantity that shows that in this case $E(u)$ is bounded by a constant multiple of $ [\deg(P)]^{m-1} < (mN_m)^{m-1}$, as claimed. 
\end{proof}
\begin{lem} \label{H-om-l-lemma} 
In the setup described in section \ref{proof-prod-norm}, the estimate \eqref{H-om-l} holds. 
\end{lem}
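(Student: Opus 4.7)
Since each cell $O_{\vec{\ell}}$ is a Cartesian product of the intervals $I(k,\ell_k)$ for $1\leq k\leq m$ and of the singletons $\Omega_k$ for $k>m$, the section $\Omega_{\vec{\ell}} = \Omega \cap O_{\vec{\ell}}$ inherits the same product structure:
\[ \Omega_{\vec{\ell}} = \prod_{k=1}^{m} \Omega_k(\vec{\ell}) \times \prod_{k=m+1}^{n} \Omega_k, \qquad \Omega_k(\vec{\ell}) := \Omega_k \cap I(k,\ell_k). \]
The key observation is that $Q_{m,m} = \lfloor N_m/N_m \rfloor = 1$, so each of the $N_m$ intervals $\{I(m,\ell_m)\}$ contains $1$ or $2$ elements of $\Omega_m$, while their combined count is exactly $N_m$. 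This forces $|\Omega_m(\vec{\ell})| = 1$ for every $\vec{\ell}$, so $\Omega_{\vec{\ell}}$ is a Cartesian product whose last $n-m+1$ factors are singletons. The inductive hypothesis (covering vectors with at most $m-1$ nontrivial factors) therefore applies directly to $\Omega_{\vec{\ell}}$, with no further splitting needed.

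Setting $N_k' := |\Omega_k(\vec{\ell})|$ for $1\leq k\leq m-1$ and $N_k' := 1$ otherwise, the inductive hypothesis yields
\[ \|H_{\Omega_{\vec{\ell}}}\|_{2 \to 2} \leq C_0 \Bigl[ \prod_{k=2}^{m-1} N_k' \Bigr]^{1/2} \log\Bigl( \frac{N_1'}{N_2'} + 1 \Bigr). \]
The plan is to replace each $N_k'$ by a controlled multiple of $N_k/N_m$. Since $N_k' \leq Q_{k,m}+1 \leq N_k/N_m + 1 \leq 2 N_k/N_m$ for $1\leq k \leq m-1$ (using $N_k \geq N_m$), the product factor is dominated by $2^{m-2} \prod_{k=2}^{m-1} N_k/N_m$, and the multiplicative factor can be absorbed into the constant.

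The only nontrivial step is controlling the logarithmic factor, for which I would perform a short case analysis on the size of $N_2/N_m$. If $N_2 \geq 2 N_m$, then $N_2' \geq Q_{2,m} \geq N_2/(2N_m)$ while $N_1' \leq 2N_1/N_m$, giving $N_1'/N_2' \leq 4 N_1/N_2$. If instead $N_m \leq N_2 < 2 N_m$, then $Q_{2,m} = 1$ so $N_2' \geq 1$, and $N_m > N_2/2$ combined with $N_1 \geq N_2$ gives $N_1' \leq 2N_1/N_m + 1 \les N_1/N_2$, hence $N_1'/N_2' \leq N_1' \les N_1/N_2$. In either case $\log(N_1'/N_2' + 1) \les \log(N_1/N_2 + 1)$. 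Substituting this into the displayed inequality above and absorbing the numerical constants produces \eqref{H-om-l}. The main, though modest, obstacle is this logarithmic case split, which is needed precisely because the comparison ratio $N_k/N_m$ behaves differently in the regime where the denominator $N_2$ is close to $N_m$; the rest is routine bookkeeping with the inductive hypothesis.
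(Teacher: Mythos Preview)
Your approach is essentially the paper's: recognize that $\Omega_{\vec{\ell}}$ is a product set with singleton factors in positions $k\geq m$, apply the induction hypothesis, and then pass from the $Q_{k,m}$ to $N_k/N_m$. Two small points are worth noting.

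First, there is a minor slip where you invoke the induction hypothesis with the tuple $(N_1',\ldots,N_{m-1}',1,\ldots,1)$: the hypothesis is formulated only for \emph{sorted} tuples, and your $N_k'=|\Omega_k(\vec{\ell})|$ need not be sorted (e.g.\ $N_1'=Q_{1,m}$, $N_2'=Q_{2,m}+1$ with $Q_{1,m}=Q_{2,m}$). Since $N_k'\in\{Q_{k,m},Q_{k,m}+1\}$ and the $Q_{k,m}$ are monotone, the tuple is sorted up to $\pm 1$ and the issue is harmless, but it should be addressed. The paper avoids this by bounding $N_k'\leq Q_{k,m}+1$ and applying the hypothesis directly to the monotone tuple $(Q_{1,m}+1,\ldots,Q_{m-1,m}+1,1,\ldots,1)$.

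Second, this is also what makes your case split on $N_2/N_m$ unnecessary. Working with $Q_{k,m}+1$, the logarithm in question is $\log\bigl((Q_{1,m}+1)/(Q_{2,m}+1)+1\bigr)$, and the single observation $Q_{2,m}+1=\lfloor N_2/N_m\rfloor+1>N_2/N_m$ (together with $Q_{1,m}+1\leq 2N_1/N_m$) gives $(Q_{1,m}+1)/(Q_{2,m}+1)<2N_1/N_2$ in one line, with no cases. Your case analysis is correct, just more work than needed.
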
 
\begin{proof} 
For each multi-index $\vec{\ell} = (\ell_1, \cdots, \ell_m)$,  
\[ \Omega (\vec{\ell})  :=  \Omega \cap O_{\vec{\ell}} = \prod_{k=1}^n \Omega_k (\vec{\ell}),  \quad \text{ where } \quad \Omega_k(\vec{\ell}) :=  \begin{cases} \Omega_k \cap I(k, \ell_k) &\text{ if } k \leq m,  \\ \Omega_k &\text{ if } k > m. \end{cases} \]
In other words, each direction set $\Omega(\vec{\ell})$ is of product type, with $\#(\Omega_k(\vec{\ell})) \leq Q_{k,m}+1$ for $k < m$ and $\#(\Omega_{k}(\vec{\ell}) )= 1$ for $k \geq m$.  Further, the hypothesis $N_1 \geq N_2 \geq \cdots \geq N_m$ implies that $Q_{1,m} \geq Q_{2,m} \geq \cdots \geq Q_{m-1,m}$. Thus the induction hypothesis \eqref{eqn:prod:norm} applies, with $m$ replaced by $m-1$, to $\Omega(\vec{\ell})$ for each multi-index $\vec{\ell}$, and yields
\begin{align*} 
\max_{\vec{\ell}} ||H_{\Omega(\vec{\ell})}||_{2 \rightarrow 2} &\leq \mathfrak C_{\text{prod}}(Q_{1,m}+1, \cdots, Q_{m-1,m}+1, 1, \cdots, 1; n) \\ 
&\leq C \Bigl[ \prod_{k=2}^{m-1} (Q_{k,m}+1)\Bigr]^{\frac{1}{2}} \log \left( \frac{Q_{1,m}+1}{Q_{2,m}+1} + 1 \right)  \\ 
&\leq C_0 \Bigl[ \prod_{k=2}^{m-1} \frac{N_k}{N_m} \Bigr]^{\frac{1}{2}} \log \left( \frac{N_1}{N_2} + 1 \right).  
\end{align*}     
This is the claimed estimate \eqref{H-om-l}.
\end{proof} 
\begin{lem} \label{H-O-lemma}
In the setup described in section \ref{proof-prod-norm}, the claim in \eqref{H-O} holds. 
\end{lem}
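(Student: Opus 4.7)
The plan is to choose $\mathcal O$ to be an axis-aligned Cartesian product obtained by picking one representative from each interval $I(k, \ell_k)$, and then to bound its $L^2$ operator norm via Kim's estimate \eqref{Kim-Cprod} after reducing to the appropriate $(m+1)$-dimensional setting.

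Concretely, for each $k \in \{1, \ldots, m\}$ and each $\ell_k \in \{1, \ldots, N_m\}$, I would pick an arbitrary representative $a_k(\ell_k) \in I(k, \ell_k)$ and assemble $A_k := \{a_k(\ell_k) : 1 \leq \ell_k \leq N_m\} \subseteq \R$, of cardinality exactly $N_m$. Writing $\Omega_k = \{\omega_k\}$ for $k > m$, I would set
\[ \mathcal O := A_1 \times A_2 \times \cdots \times A_m \times \{\omega_{m+1}\} \times \cdots \times \{\omega_n\}. \]
For any multi-index $\vec{\ell} = (\ell_1, \ldots, \ell_m)$, the set $\mathcal O \cap O_{\vec{\ell}}$ equals the singleton $\{(a_1(\ell_1), \ldots, a_m(\ell_m), \omega_{m+1}, \ldots, \omega_n)\}$, so the cardinality condition $\#(\mathcal O \cap O_{\vec{\ell}}) = 1$ is immediately verified.

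Next, to bound $\|H_{\mathcal O}\|_{L^2(\R^{n+1}) \to L^2(\R^{n+1})}$, I would exploit the fact that every point of $\mathcal O$ shares the same last $n-m$ coordinates $(\omega_{m+1}, \ldots, \omega_n)$. By the invariance of the operator norm under affine transformations of the direction set (Lemma \ref{lem:modify}), I may replace $\mathcal O$ by $A_1 \times \cdots \times A_m \times \{0\}^{n-m}$ without changing the norm. The slicing argument in Lemma \ref{lem:slice} then identifies this operator norm with $\|H_{A_1 \times \cdots \times A_m}\|_{L^2(\R^{m+1}) \to L^2(\R^{m+1})}$, where $A_1 \times \cdots \times A_m$ is now viewed as a direction set in $\R^m$. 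Because Lemma \ref{H-O-lemma} is only invoked in the induction step at level $m \geq 2$, this is an $m$-fold Cartesian product in which every factor has cardinality $N_m$, and Kim's estimate \eqref{Kim-Cprod} applied in dimension $m$ therefore yields the desired bound $C N_m^{(m-1)/2}$.

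The principal obstacle I anticipate is technical rather than conceptual: confirming that the combined use of Lemmas \ref{lem:modify} and \ref{lem:slice} matches their appendix formulations exactly, in particular, that translating the direction set by a fixed vector in $\R^n$ preserves the operator norm and that the slicing reduction gives equality of the $L^2(\R^{n+1})$ and $L^2(\R^{m+1})$ operator norms rather than only a one-sided estimate. Once these book-keeping points are in place, the argument reduces to a direct appeal to Kim's bound for equal-cardinality products, which is precisely the scenario Lemma \ref{H-O-lemma} was designed to leverage.
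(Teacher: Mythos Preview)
Your proposal is correct and follows essentially the same approach as the paper: both construct $\mathcal O$ as the Cartesian product of one-point-per-interval sets $A_k$ (called $\mathcal O_k$ in the paper) with the singleton factors $\Omega_k$ for $k>m$, verify the singleton intersection with each $O_{\vec{\ell}}$, and then reduce via Lemma~\ref{lem:slice} to an $m$-fold product of equal-cardinality sets in $\R^m$ to apply Kim's bound \eqref{Kim-Cprod}. Your separate appeal to Lemma~\ref{lem:modify} is in fact unnecessary, since Lemma~\ref{lem:slice} as stated already allows an arbitrary $w\in\R^l$ (not just $w=0$) and gives an equality of operator norms, so the bookkeeping concern you raise does not arise.
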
 
\begin{proof} 
Let us first describe the set $\mathcal O$. For each $k \leq m$ and $1 \leq \ell_k \leq N_m$, we pick a point $a(k, \ell_k) \in I(k, \ell_k)$. Define
\[ \mathcal O_k :=  \left\{ \begin{aligned} &\bigl\{a(k, \ell_k) : 1 \leq \ell_k \leq N_m \bigr\} &\text{ if } k \leq m, \\ &\Omega_k &\text{ if } k > m, \end{aligned} \right\} \quad \text{ and } \quad \mathcal O := \prod_{k=1}^{n} \mathcal O_k.  \]  
Clearly, $\mathcal O$ is of product type, with $\#(\mathcal O_k) = N_m$ for all $k \leq m$, and $\#(\mathcal O_k) = 1$ for all $k > m$. Moreover, for every multi-index $\vec{\ell} = (\ell_1, \cdots, \ell_m)$, 
\[ O_{\vec{\ell}} \cap \mathcal O = \Bigl[ \prod_{k=1}^{m} I(k, \ell_k) \cap \mathcal O_k \Bigr] \times \prod_{k=m+1}^n \Omega_k = \Bigl[ \prod_{k=1}^{m} \{a(k, \ell_k)\} \Bigr] \times \prod_{k=m+1}^n \Omega_k,  \]
which is a single point in $\mathbb R^n$.
\vskip0.1in  
\noindent Our next task is to estimate the $L^2$ operator norm of $H_{\mathcal O}$. By Lemma \ref{lem:slice}, 
\[ ||H_{\mathcal O}||_{L^2(\mathbb R^{n+1}) \rightarrow L^2(\mathbb R^{n+1})} = || H_{\mathcal O^{\ast}} ||_{L^2(\mathbb R^{m+1}) \rightarrow L^2(\mathbb R^{m+1})}, \; \text{ where } \; \mathcal O^{\ast} = \prod_{k=1}^{m} \mathcal O_k.\] 
The direction set $\mathcal O^{\ast}$ is an $m$-fold Cartesian product of sets with equal cardinalities; so the result of \cite{Kim}, as given in \eqref{Kim-Cprod}, applies to it. Since $m \geq 2$, invoking this result yields  
\begin{equation} 
||H_{\mathcal O}||_{2 \rightarrow 2} = ||H_{\mathcal O^{\ast}}||_{2 \rightarrow 2} \leq \mathfrak C_{\text{prod}}(N_m, \cdots, N_m; m) \leq C N_m^{\frac{m-1}{2}}. 
\end{equation}      
This concludes the proof of \eqref{H-O} 
\end{proof}

\subsection{Sharpness of \eqref{eqn:prod:norm}} \label{sharpness-prod-section} 
For a sequence of integers $N_1 \geq \cdots \geq N_n \geq 1$, we set $N = N_1 N_2 \cdots N_n$ and
\begin{equation} \label{sharpness-Omega}  \quad \om = \prod_{k=1}^n U_{N_k}, \; \text{ where } \; U_{N_k} = \bigl\{ j/N_k : 1\leq j \leq N_k \bigr\}. \end{equation}  
Thus $\#(\Omega) = N$. The goal is to show that 
\begin{equation} 
||H_{\Omega}||_{L^2(\mathbb R^{n+1}) \rightarrow L^2(\mathbb R^{n+1})} \geq c \Bigl(\frac{N}{N_1} \Bigr)^{\frac12} \log \Bigl( \frac{N_1}{N_2}+1\Bigr), \label{sharpness-product} 
\end{equation} 
for some constant $c > 0$ that is independent of $N_1, N_2, \cdots, N_k$.  
\vskip0.1in
\noindent We choose the test function 
\begin{equation}  f(y_1,\cdots,y_{n+1}) = \frac{\ind_{\mathcal R} (y_1,\cdots,y_{n+1})}{N_2 N_1^{-1} + y_1 + y_{n+1}}, \; \mathcal R = [0, 5] \times [0, 2]^{n-1} \times [0,1].  \label{test-function-f} \end{equation} 

Concerning this test function $f$, we claim the followings. 
\begin{lem} \label{f-L2-norm-lemma} 
For $f$ as in \eqref{test-function-f}, we have the estimate 
\[ \norm{f}_{2}^2 \sim \log \Bigl(\frac{N_1}{N_2} + 1 \Bigr). \]
\end{lem}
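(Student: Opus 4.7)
The plan is a direct computation of the $L^2$ norm of $f$ using elementary calculus. Write
\begin{equation*}
\|f\|_2^2 = \int_0^1 \int_{[0,2]^{n-1}} \int_0^5 \frac{dy_1 \, dy_2 \cdots dy_n \, dy_{n+1}}{(a+y_1+y_{n+1})^2}, \qquad a := N_2/N_1,
\end{equation*}
and observe that the integrand does not depend on $y_2,\dots,y_n$. Integrating over $[0,2]^{n-1}$ just produces a harmless factor of $2^{n-1}$, so the problem reduces to estimating the two-dimensional integral over $(y_1,y_{n+1})\in[0,5]\times[0,1]$.

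Next, I would carry out the two one-dimensional integrations explicitly. The $y_{n+1}$-integral yields
\begin{equation*}
\int_0^1 \frac{dy_{n+1}}{(a+y_1+y_{n+1})^2} = \frac{1}{a+y_1} - \frac{1}{a+y_1+1},
\end{equation*}
and then integration in $y_1$ gives
\begin{equation*}
\int_0^5 \Bigl(\frac{1}{a+y_1}-\frac{1}{a+y_1+1}\Bigr)\,dy_1 = \log\!\Bigl(\frac{(a+5)(a+1)}{a(a+6)}\Bigr).
\end{equation*}
Thus, up to the constant $2^{n-1}$,
\begin{equation*}
\|f\|_2^2 \;\sim\; \log\!\Bigl(\frac{(a+5)(a+1)}{a(a+6)}\Bigr).
\end{equation*}

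Finally, I would check that this last expression is comparable to $\log(N_1/N_2+1) = \log(1/a+1)$ uniformly in $a\in(0,1]$ (the hypothesis $N_1\geq N_2$ forces $a\leq 1$). For $a$ near $0$, $(a+5)(a+1)/[a(a+6)] \sim 5/(6a)$, which is comparable to $1/a+1$; for $a$ bounded away from $0$ (say $a\in[1/2,1]$), both sides are bounded constants uniformly comparable to $1$. Pasting these two regimes gives the two-sided bound $\|f\|_2^2 \sim \log(N_1/N_2+1)$. I do not expect any serious obstacle here: the only mild care needed is to verify the comparability at both endpoints of the range of $a$, which is a short elementary check.
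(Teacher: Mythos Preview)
Your proposal is correct and follows essentially the same route as the paper: a direct computation of the double integral after removing the trivial $(y_2,\dots,y_n)$-variables, followed by an elementary comparison with $\log(N_1/N_2+1)$. The paper writes the answer as $2^{n-1}\bigl(\log(1+N_1/N_2)-\log((a+6)/(a+5))\bigr)$, which is algebraically identical to your $2^{n-1}\log\bigl((a+5)(a+1)/[a(a+6)]\bigr)$, and then dispatches the comparison by observing that the correction term $\log((a+6)/(a+5))$ is less than half of $\log(1+N_1/N_2)$ for $a\in(0,1]$; your two-regime argument achieves the same end.
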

\begin{proof} 
A direct calculation shows that
\begin{align*}
||f||_2^2 = 2^{n-1} \left( \log \Bigl(  1 + \frac{N_1}{N_2} \Bigr)  - \log \Bigl( \frac{ N_2N_1^{-1} + 6}{ N_2N_1^{-1} + 5} \Bigr) \right). 
\end{align*} 
Since $0< N_2N_1^{-1}  \leq 1$, 
\[ \log \Bigl(  1 + \frac{N_1}{N_2} \Bigr)  > 2 \log \Bigl( \frac{ N_2N_1^{-1} + 6}{ N_2N_1^{-1} + 5} \Bigr). \]
Therefore, we get the bound 
\[  2^{n-2}  \log \Bigl( 1 + \frac{N_1}{N_2}\Bigr)\leq   ||f||_2^2 \leq 2^{n-1} \log \Bigl( 1 + \frac{N_1}{N_2}\Bigr).  \]
\end{proof} 

\begin{lem} \label{Homf-L2-lemma} 
Let $f$ be as in \eqref{test-function-f}. There exist a constant $c>0$ and a collection of sets $\{S_v : v \in \Omega \}$ in $\mathbb R^{n+1}$ such that 
\begin{align}
S_v \cap S_{v'} &= \emptyset \text{ for } v \ne v', \text{ and } \label{Sv-disjoint} \\
|| \bigl[H_{v} f \bigr]  \ind_{S_v}||_2^2 &\geq \frac{c}{N_1} \Bigl[ \log \Bigl( \frac{N_1}{N_2} + 1\Bigr) \Bigr]^3 \text{ for every } v \in \Omega.  \label{Homf-L2-norm} 
\end{align} 
\end{lem}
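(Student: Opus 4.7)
For each $v = (v_1, \ldots, v_n) \in \Omega$, the plan is to construct $S_v$ as a thin tube in $\mathbb R^{n+1}$ oriented along $\vec v = (v_1, \ldots, v_n, 1)$, and then estimate $|H_v f|$ pointwise on $S_v$ before integrating. Concretely, I would take
\[
S_v := \Bigl\{ x \in \mathbb R^{n+1} : x_{n+1} \in J,\ |x_1 - v_1 x_{n+1}| \leq \delta_1,\ x_k - v_k x_{n+1} \in I_k(v_k, x_{n+1}) \text{ for } k \geq 2 \Bigr\},
\]
where $J \subset \mathbb R_+$ is an interval of the form $[N_2, N_1]$, $\delta_1 \lesssim N_2/N_1$ (small enough so that the parameter $\alpha := N_2/N_1 + \Delta_1$ below remains comparable to $N_2/N_1$, and small enough to give $v_1$-disjointness once $x_{n+1} \gtrsim N_2$), and $I_k(v_k, x_{n+1})$ are intervals of length $\lesssim x_{n+1}/N_k$ that are pairwise disjoint across the choices of $v_k$. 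Disjointness of the full $S_v$'s is then immediate from the pigeonhole principle: if $v_k \neq v_k'$, then $|v_k - v_k'| \geq 1/N_k$, whereas the constraints force $|v_k - v_k'|\, x_{n+1} \leq 2 c x_{n+1}/N_k$, a contradiction once $c < 1/2$.

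For $x \in S_v$, reparametrize the line $y(t) = x - t\vec v$ via $s = x_{n+1} - t$ so that $y(s) = (v_1 s + \Delta_1, \ldots, v_n s + \Delta_n, s)$ with $\Delta_k = x_k - v_k x_{n+1}$ in the prescribed small
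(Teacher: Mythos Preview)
Your construction has the right shape, but the choice $\delta_1 \lesssim N_2/N_1$ for the $x_1$-width is too restrictive and costs you a logarithm. With $|\Delta_1| \leq \delta_1 \sim N_2/N_1$ you do get the best pointwise bound $|H_v f(x)| \gtrsim x_{n+1}^{-1}\log(N_1/N_2)$, but the $\Delta_1$-integration then only contributes a factor $\delta_1 \sim N_2/N_1$, and the remaining $x_{n+1}$-integral
\[
\int_{N_2}^{N_1} \frac{1}{x_{n+1}^2}\Bigl[\log\tfrac{N_1}{N_2}\Bigr]^2 \cdot \frac{N_2}{N_1}\, dx_{n+1}
\;\sim\; \frac{1}{N_1}\Bigl[\log\tfrac{N_1}{N_2}\Bigr]^2
\]
falls short of the required cube. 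Disjointness in the first coordinate only demands $\delta_1 \lesssim x_{n+1}/N_1$ (since $|v_1 - v_1'|\,x_{n+1} \geq x_{n+1}/N_1$), and you are throwing away most of this room by freezing $\delta_1$ at its worst-case value $x_{n+1} = N_2$.

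The paper's proof makes exactly this adjustment: it lets the $x_1$-window grow with $|x_{n+1}|$, taking $\Delta_1 \in (N_2/N_1,\, |x_{n+1}|/N_1)$. The pointwise bound then degrades to $|H_v f(x)| \gtrsim |x_{n+1}|^{-1}\log(N_1/|x_{n+1}|)$, but the $\Delta_1$-integration now contributes $\sim |x_{n+1}|/N_1$, converting $|x_{n+1}|^{-2}$ to $|x_{n+1}|^{-1}$ and yielding
\[
\frac{1}{N_1}\int_{2N_2}^{4N_1} \Bigl[\log\tfrac{4N_1}{z}\Bigr]^2 \frac{dz}{z} \;\sim\; \frac{1}{N_1}\Bigl[\log\tfrac{N_1}{N_2}\Bigr]^3.
\]
Your sign convention ($x_{n+1} > 0$ rather than the paper's $x_{n+1} < 0$) and your reparametrization are fine; the only missing idea is to let the tube widen in the $x_1$-direction as $x_{n+1}$ increases.
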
 

Before we give a proof of Lemma \ref{Homf-L2-lemma}, we proceed to prove our claim \eqref{sharpness-product}. 
Since $|H_{\Omega}f|$ pointwise dominates $|H_{v}f|$ for every $v \in \Omega$, the disjointness of the sets $S_v$ (as given by \eqref{Sv-disjoint}) leads to the following estimate: 
\begin{align}  ||H_{\Omega} f||_2^2 &\geq \sum_{v \in \Omega} || \bigl[ H_{v} f \bigr] \ind_{S_v}||_2^2 \nonumber \\ 
&\geq \sum_{v \in \Omega} \frac{c}{N_1} \Bigl[ \log \Bigl( \frac{N_1}{N_2} + 1\Bigr) \Bigr]^3 \geq c \frac{N}{N_1} \Bigl[ \log \Bigl( \frac{N_1}{N_2} + 1\Bigr) \Bigr]^3. \label{Hom-L2-lower-bound} 
\end{align} 
In the second line of the displayed sequence above, we have substituted \eqref{Homf-L2-norm} into the right hand side of the previous expression.  Combining \eqref{Hom-L2-lower-bound} with Lemma \ref{f-L2-norm-lemma}, we arrive at  
\[ ||H_{\Omega}||_{2 \rightarrow 2}^2 \geq  \frac{||H_{\Omega}f||_2^2}{||f||_2^2} \geq c \frac{N}{N_1} \Bigl[ \log \Bigl( \frac{N_1}{N_2} + 1\Bigr) \Bigr]^2,\]
which is the desired estimate \eqref{sharpness-product}.  
\vskip0.1in
\noindent It only remains to verify Lemma \ref{Homf-L2-lemma}.
\subsubsection{Proof of Lemma \ref{Homf-L2-lemma}}  
\begin{proof} 
Let $\Omega$ be as in \eqref{sharpness-Omega}. For each $v=(v_1,\cdots,v_n) \in \om$, let us set
\begin{equation} \label{def-Wv} W_v := \left\{ x\in \R_{-}^{n+1} \Bigl| \; \; \begin{aligned} &2N_2 < |x_{n+1}| < 4N_1, \\  &-{1}/{N_k} < \frac{x_k}{x_{n+1}} - v_k< 0\; \text{ for } 1\leq k \leq n. \end{aligned} \right\}, \end{equation} 
where $\R_{-}$ denotes $(-\infty, 0)$.
We first argue that the sets $W_v$ are disjoint. Indeed, if $v =(v_1, \cdots, v_n)$ and $v' = (v_1', \cdots, v_n')$ are two distinct elements of $\Omega$, then there exists an index $k \in \{1, \cdots, n\}$ such that $v_k \ne v_k'$. The definition \eqref{sharpness-Omega} of $\Omega$ implies that $v_k = j_k/N_k$ and $v_k' = j_k'/N_k$ for integers $j_k, j_k' \in \{1, \cdots, N_k\}$, $j_k \ne j_k'$. Thus, 
\[ \bigcap_{v \in \{v_k, v_k'\}} \Bigl(v - \frac{1}{N_k}, v \Bigr) = \bigcap_{j \in \{ j_k, j_k' \}} \Bigl(\frac{j-1}{N_k}, \frac{j}{N_k} \Bigr) = \emptyset. \] 
In other words, we have $\pi_k(W_v) \cap \pi_k(W_{v'}) = \emptyset$, where $\pi_k(x) = x_k/x_{n+1}$. Thus $W_v \cap W_{v'} = \emptyset$, proving the claimed disjointness. The set $S_v$ mentioned in Lemma \ref{f-L2-norm-lemma} will be a suitably chosen subset of $W_v$, thus ensuring \eqref{Sv-disjoint}. The remainder of the proof is devoted to verifying \eqref{Homf-L2-norm}, for $f$ as in \eqref{test-function-f}. We will do this by establishing an explicit pointwise lower bound on $H_vf$ on a subset of $W_v$.   
\vskip0.1in
\noindent A consequence of the definition \eqref{def-Wv} is that if $x \in W_v$, the (apriori signed) integral defining $H_vf(x)$ becomes sign-specific. More precisely, the integrand in
\[ H_vf(x) = \int \frac {\ind_{\mathcal R}(x - \langle v, 1 \rangle t)}{N_2 N_1^{-1} + (x_1 - v_1t) + (x_{n+1} - t)} \frac{dt}{t} \]   
is non-zero only if $t < 0$. In other words, by replacing $t$ by $-t$ we obtain
\begin{equation} \label{Hv-t}
\bigl| H_vf(x) \bigr| = \int_{0}^{\infty} \frac {\ind_{\mathcal R}(x + \langle v, 1 \rangle t)}{N_2 N_1^{-1} + (x_1 + v_1t) + (x_{n+1} + t)} \frac{dt}{t} 
\end{equation} 
In the next few steps, we will sequentially identify subsets of $W_v$ for which the integral above can be further simplified, eventually reducing it to a form  that can be directly integrated. To this end, we introduce an auxiliary set
\begin{align} \label{def-Xv-0} 
X_v &:= \bigcap_{k=1}^{n}\Bigl\{ x \in W_v : \frac{a_k -v_k}{x_{n+1}} < \frac{x_k}{x_{n+1}} - v_k < 0 \Bigr\}, \; \text{where } \\ a_k &= \begin{cases} 5 &\text{ if } k = 1, \\ 2 &\text{ if }  k \geq 2. \end{cases}  \nonumber 
\end{align} 
\noindent We first verify that $X_v$ is nonempty. In fact, for $2N_2 \leq |x_{n+1}| \leq 4N_1$, the quantity $(a_k - v_k)/|x_{n+1}|$ is $< 1/N_k$ for $k \geq 2$,  and is $> 1/N_k$ for $k =1$. 
Hence $X_v$ admits the alternative description
\begin{equation} \label{def-Xv}
X_v := \left\{ x\in \R_{-}^{n+1} \Biggl| \; \; \begin{aligned} &2N_2 < |x_{n+1}| < 4N_1, \\ &0 < x_1 - v_1 x_{n+1} < \frac{|x_{n+1}|}{N_1}, \\  & 0 < x_k - v_k x_{n+1} < a_k - v_k \; \text{ for } 2\leq k \leq n. \end{aligned} \right\} 
\end{equation} 
 The relevance of the set $X_v$ is that for every $x \in X_v$,  
\begin{equation} \label{geometric-fact-2} \bigl\{t \in \mathbb R : x + \langle v, 1 \rangle t \in \mathcal R \bigr\}= I_0, \end{equation} 
where we write, for $0\leq b<1$,
\begin{equation}
I_b := \bigl\{t \in \mathbb R: b < x_{n+1} + t < 1 \bigr\}.
\end{equation}
We will prove this geometric fact in a moment. Assuming this for now, we see that 
\begin{align} 
\bigl| H_vf(x) \bigr| &= \int_{I_0} \frac{1}{N_2 N_1^{-1} + (x_1 + v_1t) + (x_{n+1} + t)} \frac{dt}{t} \\ 
&\geq \frac{1}{2|x_{n+1}|}  \int_{I_0} \frac{1}{N_2 N_1^{-1} + (x_1 + v_1t) + (x_{n+1} + t)} \, dt.  \label{Hv-second-round}
\end{align} 
The last inequality uses the fact that $0<t < 1 - x_{n+1} = 1 + |x_{n+1}| < 2|x_{n+1}|$ if $t\in I_0$. 
\vskip0.1in
\noindent Next we restrict the range of $t$ further, in order to remove the dependence of the integrand on $x_1$. To do so, we note that 
\begin{equation}  \label{x1-xn+1-ineq} 5(x_{n+1} + t) \geq (x_1 + v_1 t) \quad \text{ if and only if } \quad x_{n+1} + t \geq \frac{x_1 - v_1 x_{n+1}}{5 - v_1}.\end{equation} 
A re-arrangement of the first defining inequality of $X_v$ in \eqref{def-Xv-0} (involving the variable $x_1$) yields 
\begin{equation}  0 < b(x) := \frac{x_1 - v_1 x_{n+1}}{5 - v_1} < 1. \label{def-a} \end{equation} Using the relation \eqref{x1-xn+1-ineq} therefore leads to the following estimate on the integral in \eqref{Hv-second-round}:
\begin{align}
\bigl| H_vf(x) \bigr|  &\geq \frac{1}{2|x_{n+1}|}\int_{I_{b(x)}} \frac{1}{N_2 N_1^{-1} + 6(x_{n+1} + t)} dt, \; \text{ which in turn is } \nonumber \\ 
&\geq \frac{c}{|x_{n+1}|} \int_{I_{b(x)}} \frac{1}{x_{n+1} + t} dt = \frac{c}{|x_{n+1}|} \log(1/b(x)), \label{Hv-third-round} 
\end{align}   
provided $5 \cdot b(x) \geq N_2/N_1$ for some absolute constant $0<c<1$. 
\vskip0.1in
\noindent This last requirement leads to the definition of $S_v$:  
\[ S_v := \Bigl\{ x \in X_v : x_1 - v_1 x_{n+1} \geq \frac{N_2}{N_1}\Bigr\}, \] or written explicitly from \eqref{def-Xv},
\begin{equation} 
S_v := \left\{ x \in \R_{-}^{n+1} \Biggl| \; \; \begin{aligned} &\frac{N_2}{N_1} < x_1 - v_1 x_{n+1} < \frac{|x_{n+1}|}{N_1}, \\ &2N_2 < |x_{n+1}| < 4N_1, \\ &0 < x_k - v_k x_{n+1} < 2-v_k \text{ for } 2 \leq k \leq n. \end{aligned} \right\}  \label{def-Sv}
\end{equation} 
The computations leading up to \eqref{Hv-third-round} and \eqref{def-Sv} show that for $x \in S_v$, 
\begin{align}  
\bigl| H_vf(x) \bigr| &\geq \frac{c}{|x_{n+1}|} \log \Bigl(\frac{5- v_1}{x_1 - v_1 x_{n+1}} \Bigr) \nonumber  \\
& \geq \frac{c}{|x_{n+1}|} \log \Bigl( \frac{4N_1}{|x_{n+1}|}\Bigr). \label{Hv-final} \end{align}  
We will use this estimate to arrive at \eqref{Homf-L2-norm}. 
\vskip0.1in
\noindent In preparation for computing the $L^2$ norm of $H_vf$ on $S_v$, and in view of the representation \eqref{def-Sv}, we make a change of variables in $S_v$, setting \[ z_k = x_k - x_{n+1}v_k \; \text{ for } 1 \leq k \leq n, \quad \text{ and }  \quad z_{n+1} = -x_{n+1}. \]  Incorporating this into \eqref{Hv-final} results in the following estimate: 
\begin{align*}
\int_{S_v} \bigl| H_v f(x) \bigr|^2 &\geq c \prod_{k=2}^n (2-v_k) \int_{2N_2}^{4N_1} \frac{1}{z_{n+1}^2} \Bigl[\log \Bigl(\frac{4N_1}{z_{n+1}} \Bigr) \Bigr]^2 \int_{ N_2/N_1}^{z_{n+1}/N_1}\, dz_1  \, dz_{n+1} \\ 
&\geq \frac{c}{N_1} \int_{2N_2}^{4N_1} \frac{(z_{n+1} - N_2)}{ z_{n+1}^2} \Bigl[\log \Bigl(\frac{4N_1}{z_{n+1}} \Bigr) \Bigr]^2 \, dz_{n+1} \\ 
 &\geq \frac{c}{2N_1} \int_{2N_2}^{4N_1}  \Bigl[\log \Bigl(\frac{4N_1}{z_{n+1}} \Bigr) \Bigr]^2 \frac{dz_{n+1}}{z_{n+1}} = \frac{c}{6N_1} \Bigl[\log \Bigl( \frac{2N_1}{N_2}\Bigr) \Bigr]^3 \\ 
 &\geq \frac{c}{6N_1} \Bigl[ \log \left( \frac{N_1}{N_2} + 1\right) \Bigr]^3. 
\end{align*} 
This is the estimate claimed in \eqref{Homf-L2-norm}, which completes the proof. 
\end{proof} 

\subsubsection{Proof of \eqref{geometric-fact-2}}
\begin{proof} 
Since the inclusion $\subset$ is trivial, it suffices to prove the inclusion $\supset$.
Suppose that $x \in X_v$, with $X_v$ as in \eqref{def-Xv}. Suppose also that $0 < x_{n+1} + t < 1$. Then $x + \langle v, 1\rangle t \in \mathcal R$ if and only if $0 < x_k + v_k t < a_k$ for $1 \leq k \leq n$. Here $a_k$ is the constant defined in \eqref{def-Xv-0}. Accordingly, we check 
\[ x_k + v_k t = (x_k - v_k x_{n+1}) + v_k ( x_{n+1} + t) \] 
In view of the defining inequalities for $X_v$ given in \eqref{def-Xv}, the first term in parentheses above is bounded below and above by 0 and $(a_k - v_k)$ respectively.  The condition $0 < x_{n+1} + t < 1$ says that the second term is bounded between 0 and $v_k$. Adding the two terms therefore results in the desired inequality.    
\end{proof} 
\section{Improved estimates in $\mathbb R^3$, Part 1: Proof of Theorem \ref{thm:3d}} \label{proof:thm3d:section}  
\subsection{Ingredients of the proof} In addition to Theorem \ref{thm:ortho}, the proof of Theorem \ref{thm:3d} relies on two facts. The first is a polynomial partitioning result due to Guth and Katz \cite{GK} . 
\begin{theorem}[{\cite[Theorem 4.1]{GK}}] \label{thm:GK}
There exists an absolute constant $A_1\geq 2$, depending only on $n$, with the following property. Given any integer $D\geq 1$ and any finite set $\om\subset \R^{n}$ of cardinality $N$, there is an $n$-variate polynomial $P \in \mathbb R[x_1, \cdots, x_n]$ that is not identically zero and has degree at most $D$, so that $\R^{n} \setminus Z_{\mathbb R}(P)$ is a disjoint union of at most $A_1 D^n$ open connected components $O_j$, each containing at most $A_1 N {D}^{-n}$ elements of $\om$. 
\end{theorem}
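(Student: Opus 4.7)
The plan is to follow the original Guth--Katz argument, which combines the polynomial ham sandwich theorem with an iterated bisection and a classical bound on the number of sign components of a real polynomial.

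First, I would invoke the polynomial ham sandwich theorem in $\R^n$: given any $k$ disjoint finite point sets $S_1,\dots,S_k \subset \R^n$, there exists a nonzero polynomial $Q\in \R[x_1,\dots,x_n]$ of degree at most $c_n k^{1/n}$ that bisects each $S_i$, meaning both $S_i\cap\{Q>0\}$ and $S_i\cap\{Q<0\}$ have cardinality at most $|S_i|/2$. This rests on Borsuk--Ulam: the space $\mathcal P_d$ of polynomials of degree $\leq d$ has dimension $\binom{n+d}{d}\sim d^n$, so $d\sim k^{1/n}$ makes it exceed $k$. One defines a continuous odd map from the unit sphere in $\mathcal P_d$ to $\R^k$ whose $i$-th coordinate records the signed mass of $S_i$ on the two sides of $Z(Q)$ (after mollifying each point into a small ball to make the map continuous); Borsuk--Ulam forces a zero, which after taking a weak limit as the ball radius tends to $0$ yields the combinatorial bisection.

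Next, I would iterate. Build polynomials $P_1,P_2,\dots,P_J$ inductively. At step $j$, the zero set $Z(P_1\cdots P_{j-1})$ has already partitioned $\om$ into at most $2^{j-1}$ disjoint subsets, namely the intersections of $\om$ with the open sign cells of $P_1\cdots P_{j-1}$. Apply the ham sandwich theorem to these subsets to produce $P_j$ of degree at most $c_n\cdot 2^{(j-1)/n}$ that bisects each. The product $P := P_1 P_2 \cdots P_J$ then has degree at most $\sum_{j=1}^{J} c_n 2^{(j-1)/n}\les 2^{J/n}$ (summing the geometric series using $n\geq 1$), and each open sign cell of $P$ contains at most $N\cdot 2^{-J}$ points of $\om$. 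Choosing $J$ maximal subject to $\deg P\leq D$ yields $2^J\ges D^n$, so each cell has at most $A_1 N D^{-n}$ points of $\om$, as required.

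Finally, I would bound the number of open connected components of $\R^n\setminus Z_{\R}(P)$ by $A_1 D^n$ using the classical Milnor--Thom estimate (or Warren's refinement): for any nonzero real polynomial of degree $D$ in $n$ variables, the complement of its real zero set has $\cO_n(D^n)$ connected components. The main technical obstacle is really the ham sandwich step, because the raw map $Q\mapsto(|S_i\cap\{Q>0\}|-|S_i\cap\{Q<0\}|)_i$ is not continuous in $Q$ (it jumps when a point of $S_i$ crosses $Z(Q)$); the mollification trick sketched above is what rescues Borsuk--Ulam. Beyond this, the argument reduces to a clean geometric-series bookkeeping exercise together with the Milnor--Thom--Warren bound.
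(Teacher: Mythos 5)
Your proposal is correct and follows essentially the argument behind the cited result: this theorem is quoted from Guth--Katz (the paper itself gives no proof, attributing the component count to Milnor and Thom), and their proof is exactly your combination of the polynomial ham sandwich theorem via Borsuk--Ulam, iterated bisection of the at most $2^{j-1}$ sign classes with degrees summing as a geometric series, and the Milnor--Thom/Warren bound on the number of connected components. The only point worth phrasing carefully is that the cell-count bound comes from noting that each connected component of $\R^n\setminus Z(P)$ has a constant sign vector for $(P_1,\dots,P_J)$ and hence lies in a single sign class, which is implicit in your sketch.
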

\noindent In the above statement, the fact that $\R^{n} \setminus Z_{\mathbb R}(P)$ has at most $O(D^n)$ connected components is due to Milnor \cite{Mil} and Thom \cite{Thom}. See also \cite[Theorem A.2]{ST} for a generalization of the Milnor-Thom bound. 
\vskip0.1in
\noindent The second ingredient of the proof is a recursion inequality in the spirit of \eqref{eqn:indcurve}. 
\begin{prop}\label{lem:3d}
There exists an absolute constant $0<c<1$ such that for every $d \geq1$ and $N \geq c^{-1} d^2$,  
\begin{equation}  \mathfrak C_2^{\ast}(N;d) \leq \mathfrak C_2^{\ast}(N/2; d) + 5 \sqrt{d}.  \label{C2-star-recursion} \end{equation}
\end{prop}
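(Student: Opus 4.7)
My plan is to apply the almost-orthogonality principle (Theorem \ref{thm:ortho}) to $\om$, using a family of cells $\{O_j\}$ obtained by decomposing the real algebraic curve $Z_{\R}(P)$ into short sub-arcs, each containing a bounded number of points of $\om$. The key geometric input, playing here the role of the curve-hyperplane intersection bound used in the proof of Theorem \ref{CDR-reproof}, is a Bezout-type inequality: any line $Z(P_u) \subset \R^2$ not contained in $Z_{\R}(P)$ meets $Z_{\R}(P)$ in at most $d$ points. Since each cell $O_j$ will be a connected subset of $Z_{\R}(P)$, this will bound the line-cell incidence function $E(u)$ by $d$, independently of how finely $Z_{\R}(P)$ is subdivided.

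I will first decompose $Z_{\R}(P)$ into at most $\beta d^2$ connected smooth arcs $A_1,\dots,A_M$ for an absolute constant $\beta$, by a standard cylindrical algebraic decomposition or via an analysis of critical values of a coordinate projection. Next, within each arc $A_k$ I will order the points of $\om \cap A_k$ along the arc parameter and pair consecutive points to form sub-arc cells containing two points each; an odd leftover on $A_k$ will be absorbed into the last pair to yield a sub-arc containing three points, and an isolated arc with $n_k = 1$ will be merged into a neighboring cell by extending through a singular or limit point of $Z(P)$, so that the augmented cell remains connected in $\R^2$ and stays inside $Z(P)$. The target of this construction is a connected cover $\{O_j\}$ of $\om$ in which each cell contains at most three points of $\om$ and the total number of cells is at most $N/2$.

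With these cells in hand, I will apply the almost-orthogonality estimate \eqref{almost-ortho}, choosing representatives $v_j \in O_j \cap Z_{\R}(P)$ so that the resulting set $\cO \subset Z_{\R}(P)$ has cardinality at most $N/2$. By the definition of $\mathfrak C_2^{\ast}$, this gives $\norm{H_\cO}_{2 \to 2} \leq \mathfrak C_2^{\ast}(N/2; d)$. Combined with the bounds $\norm{E}_{L^\infty(\mathbb S^2)} \leq d + O(1)$ and $\max_j \norm{H_{\om_j}}_{2 \to 2} + 1 \leq 4$, Theorem \ref{thm:ortho} will then yield
\[
\norm{H_\om}_{2 \to 2} \leq \mathfrak C_2^{\ast}(N/2; d) + \sqrt{d + O(1)} \cdot 4 \leq \mathfrak C_2^{\ast}(N/2; d) + 5\sqrt{d},
\]
at least for $d$ sufficiently large, with the remaining small values of $d$ handled by adjusting absolute constants.

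The main obstacle will be the leftover bookkeeping required to guarantee $|\cO| \leq N/2$ exactly. This is precisely where the hypothesis $N \geq c^{-1} d^2$ becomes essential: the number of leftover points is bounded by the number of arcs, $\beta d^2 \leq c \beta N$, which is a small fraction of $N$ once $c$ is chosen as a suitably small absolute constant. Equally delicate is the requirement that leftovers be absorbed by traveling along $Z(P)$, through singular points or shared component boundaries, rather than by straight-line connections in the ambient $\R^2$; the former preserves the bound $\norm{E}_{L^\infty} \leq d$, whereas the latter would inflate it to $O(d^2)$ and destroy the $\sqrt{d}$ factor in \eqref{C2-star-recursion}.
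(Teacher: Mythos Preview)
Your overall approach is the same as the paper's: decompose $Z_{\R}(P)$ into $O(d^2)$ connected pieces (the paper does this via Lemma~\ref{lem:decomposition} in the appendix, writing the zero set, after a linear change of coordinates, as a disjoint union of at most $A_2 d^2$ isolated points and at most $A_2 d^2$ graphs), subdivide each piece into connected sub-arcs carrying a bounded number of points of $\om$, and apply Theorem~\ref{thm:ortho} together with the Bezout bound $\norm{E}_{L^\infty(\mathbb S^2)} \leq d$.

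The gap is in your merging step. Absorbing a singleton arc ``by extending through a singular or limit point of $Z(P)$'' to reach a neighboring cell is not possible in general: $Z_{\R}(P)$ may be globally disconnected (e.g.\ $P(x,y) = (x^2+y^2-1)((x-10)^2+y^2-1)$), and even when it is connected, an arc carrying exactly one point of $\om$ may be adjacent only to arcs carrying no points of $\om$, so there is no nearby cell to absorb it. The paper sidesteps this entirely by grouping in fours rather than pairs. Each arc $Z_\ell$ then contributes at most $\lceil \#(\om \cap Z_\ell)/4 \rceil \leq \#(\om\cap Z_\ell)/4 + 1$ cells (singleton cells are allowed, not merged), and together with the $\leq A_2 d^2$ isolated points one obtains
\[
\#(\cO) \;\leq\; \frac{N}{4} + \ell_0 + A_2 d^2 \;\leq\; \frac{N}{4} + 2A_2 d^2 \;\leq\; \frac{N}{2}
\]
once $c$ is chosen so that $2A_2 c < 1/4$. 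With $\#(\om_j) \leq 4$ one has $\max_j \norm{H_{\om_j}}_{2\to 2} + 1 \leq 5$, and since the finitely many lines contained in $Z_{\R}(P)$ form a null set of directions, $\norm{E}_{L^\infty(\mathbb S^2)} \leq d$ exactly (no ``$+O(1)$'' term), giving the constant $5\sqrt{d}$ in \eqref{C2-star-recursion} on the nose.
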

\begin{proof} 
The proof relies on the structure of the zero set of a bivariate polynomial. 
In Lemma \ref{lem:decomposition} of the appendix, we will show that 
there exists an absolute constant $A_2 > 0$ such that for any $d \geq 1$ and any nontrivial bivariate polynomial $P$ of degree at most $d$,  
we can write $Z_{\mathbb R}(P)$, in suitable coordinates, as a disjoint union of at most $A_2 d^2$ points and $A_2 d^2$ curves, where each curve is given by a graph of the form $\{(x, g(x)) : x \in I\}$ for some continuous function $g:I \rightarrow \mathbb R$ and some interval $I \subseteq \mathbb R$.  
\vskip0.1in
\noindent Suppose now that $\Omega \subseteq Z_{\mathbb R}(P)$ for some polynomial $P \in \mathcal P_d(2)$, $\#(\Omega) = N$. The operator norm $||H_{\Omega}||_{2 \rightarrow 2}$ is invariant under affine coordinate transformations of $\Omega$ and hence of $Z_{\mathbb R}(P)$. Therefore, choosing an appropriate set of coordinates and using Lemma \ref{lem:decomposition}, we write $Z_{\mathbb R}(P)$ as the disjoint union of its connected components $Z_{\ell}$: 
\begin{align*} 
&Z_{\mathbb R}(P) = Z_{\mathbb R}(P; \text{points}) \bigsqcup Z_{\mathbb R}(P; \text{curves}), \quad \text{ where } \\ 
&Z_{\mathbb R}(P; \text{points}) \text{ is a union of points with } \# \bigl(Z_{\mathbb R}(P; \text{points}) \bigr) \leq A_2 d^2 \quad \text{ and } \\
&Z_{\mathbb R}(P; \text{curves}) = \bigsqcup_{\ell = 1}^{\ell_0} Z_{\ell}, \;\; \ell_0 \leq A_2 d^2, \text{ where each $Z_{\ell}$ is the graph of a curve.}
\end{align*}  
For $1 \leq \ell \leq \ell_0$, suppose that $Z_{\ell} = \{(x, g_{\ell}(x)) : x \in I_{\ell} \}$, for some interval $I_{\ell} \subset \mathbb R$ and some continuous function $g_{\ell} : I_{\ell} \rightarrow \mathbb R$. Let $\pi$ denote the projection of $Z_{\ell}$ onto the horizontal axis. Then $\pi: Z_{\ell} \rightarrow I_{\ell}$ is a continuous bijection with a continuous inverse. Further, if $\Omega \cap Z_{\ell} \ne \emptyset$, then $\pi(\Omega \cap Z_{\ell})$  is a non-empty finite subset of $I_{\ell}$.  We decompose $I_{\ell}$ into the smallest number of disjoint subintervals $\{I_{\ell r} : r \geq 1\}$ such that each $I_{\ell r}$ contains at least one and no more than 4 points of $\pi(\Omega \cap Z_{\ell})$. Since connectedness is preserved under continuous maps, the pull-back of the projection $\pi$ generates a partition of $Z_{\ell}$ into disjoint connected subsets $\{ Z_{\ell r}  = \pi^{-1}(I_{\ell r}): r \geq 1\}$, such that each $Z_{\ell r}$ contains at least one and no more than 4 points of $\Omega$. For a given index $\ell$, the number of such connected sets $Z_{\ell r}$ contained in $Z_{\ell}$ is exactly $\lceil \#(\Omega \cap Z_{\ell})/4 \rceil$, hence at most  $\#(\Omega \cap Z_{\ell})/4 + 1$. 
\vskip0.1in
\noindent In order to apply Theorem \ref{thm:ortho}, we still need to define the various quantities required by the theorem. The collection of connected components $\{O_j\}$ will consist of 
the isolated points in $Z_{\mathbb R}(P)$, and the pieces $Z_{\ell r}$ of the curves $Z_{\ell}$ mentioned above. In other words, a set $O_j$ can be of two types: either $O_j = \{x_0\}$ for some $x_ 0 \in Z_{\mathbb R}(P; \text{points})$,  or $O_j = Z_{\ell r}$ for some $\ell$ and $r$. 
Clearly, the sets $O_j$ form a finite cover of $\Omega$. We pick a single point from each set $\Omega_j = \Omega \cap O_j$ to create the set $\mathcal O$ specified in Theorem \ref{thm:ortho}. Then $\mathcal O \subseteq \Omega \subseteq Z_{\mathbb R}(P)$, with 
\begin{align}  \#(\mathcal O) &\leq  \#  \bigl(Z_{\mathbb R}(P; \text{points}) \bigr) + \sum_{\ell = 1}^{\ell_0} \Bigl[ \#(\Omega \cap Z_{\ell})/4 + 1 \Bigr] \\ &\leq A_2 d^2 + \frac{\#(\Omega)}{4} + \ell_0 \leq \frac{N}{4} + 2A_2 d^2 \leq \frac{N}{4} + \frac{N}{4} = \frac{N}{2}. \label{cardinality-O}
\end{align}  
At the last step, we have chosen the constant $c > 0$ small enough so that $2A_2 c < 1/4$, which implies that $2A_2 d^2 \leq N/4$ for $d^2 \leq cN$. The choice of $O_j$ also dictates that 
\begin{equation} \label{cardinality-Oj}
\#(\Omega_j) \leq 4 \text{ for all } j. 
\end{equation} 
\vskip0.1in
\noindent Before applying Theorem \ref{thm:ortho}, it remains to estimate $E(u)$, which represents the number of sets $O_j$ that intersect a line in $\mathbb R^2$ parametrized by $u$. Since each $O_j$ has been chosen to be a subset of $Z_{\mathbb R}(P)$, clearly $E(u)$ is dominated by the number of points of intersection between the line and $Z_{\mathbb R}(P)$. It suffices therefore to estimate this last quantity for a general $u$. First, we observe that, in view of the degree of $P$, the zero set $Z_{\mathbb R}(P)$ can contain at most $d$ lines; all other lines intersect $Z_{\mathbb R}(P)$ in at most $d$ points. Hence 
\begin{equation} \label{E-d}  
||E||_{\infty} \leq d.
\end{equation}    
\vskip0.1in
\noindent With \eqref{cardinality-O}, \eqref{cardinality-Oj} and \eqref{E-d} in place, we invoke Theorem \ref{thm:ortho} to obtain that 
\begin{align*} 
||H_{\Omega}||_{2 \rightarrow 2} &\leq ||H_{\mathcal O}||_{2 \rightarrow 2} + ||E||_{L^{\infty}(\mathbb S^2)}^{1/2} \Bigl( \max_j ||H_{\Omega_j}||_{2 \rightarrow 2} + 1\Bigr) \\ 
&\leq \mathfrak C_2^{\ast}(N/2, d) + \sqrt{d} (\sup_j \# \Omega_j + 1) \leq  \mathfrak C_2^{\ast}(N/2, d) + 5 \sqrt{d} . 
\end{align*}    
This gives the desired recursive inequality \eqref{C2-star-recursion}.
\end{proof} 
\noindent Given the two ingredients in this section, the proof of Theorem \ref{thm:3d} is completed as follows.

\subsection{Proof of Theorem \ref{thm:3d}} 
\begin{proof} 
The proof uses a two-tiered induction process involving the lexicographic ordering of the pair $(N, d)$. Specifically, we declare $(N', d') < (N, d)$ if either (a) $N' < N$ or (b) $N' = N$ and $d' < d$. The goal is to establish \eqref{Zp-3d} for $(N', d') = (N, d)$, assuming that it holds for all $(N', d') < (N, d)$.  
\vskip0.1in
\noindent 
As the base of the induction, we first verify that the statement \eqref{Zp-3d} is true when $N \leq  2 A_1 c_0^{-2}$ for any $d \geq 1$, where $A_1$ is the constant from Theorem \ref{thm:GK} and $c_0$ is a small absolute constant defined below in \eqref{choice-of-c0}. In this case, \eqref{Zp-3d} holds by the trivial bound $\mathfrak C_2^{\ast}(N;d) \leq N$, with any constant $A$ obeying 
\begin{equation} \label{choice-of-A-1} 
A \geq 2 A_1 c_0^{-2}.
\end{equation}  
\vskip0.1in
\noindent Let us proceed to the induction step. We assume that the estimate \eqref{Zp-3d} holds with some sufficiently large constant $A$ for all tuples $(N', d') < (N, d)$ for some $N > 2 A_1 c_0^{-2}$. 
We will prove that \eqref{Zp-3d} holds for $(N', d') = (N, d)$ with the same constant. As we will see, the constant $A$ will be chosen to depend only on the constants $A_1$ and $c$ that appear in Theorem \ref{thm:GK} and Proposition \ref{lem:3d}, respectively. We note that the size condition $N>2 A_1 c_0^{-2} $ ensures that 
\[ cN \gg 1,\;\; \text{ and } \;\; c_0 N^{1/2} \geq 2 . \]
\vskip0.1in
\noindent 
We split the inductive step into two cases, depending on the relative sizes of $N$ and $d$. In what follows, $c$ will refer to the constant from Proposition \ref{lem:3d}.
\vskip0.1in
\noindent {\em{Case 1 (small $d$) : }}  Suppose that $1\leq d^2 \leq cN$. By Proposition \ref{lem:3d}, 
\[ \mathfrak C_2^{\ast}(N;d) \leq \mathfrak C_2^{\ast}(N/2;d) + 5 \sqrt{d} \leq A\sqrt{d} \log{(N/2)} +5 \sqrt{d}.\]
The last expression in the display above follows from the induction hypothesis applied to $(N/2, d) < (N, d)$. It is bounded above by $A \sqrt{d} \log N$ provided  
\begin{equation} \label{A4} 
A \geq 5/\log 2, 
\end{equation} 
and the induction closes in this case. 
\vskip0.1in
\noindent {\em{Case 2 (large $d$) :}} Next suppose that 
\begin{equation}
d^2 > cN. \label{N0-case2}
\end{equation} 
Let us choose any finite set $\om \subset Z_{\mathbb R}(P)$, with $P \in \mathcal P_{d}(2)$, $P \not\equiv 0$ and $\# \om =  N$. In this case, we first identify a low degree polynomial which replaces the role of $P$. Let $D$ denote the smallest integer exceeding $c_0 {N}^{\frac{1}{2}}$, where $0<c_0<1$ is a small constant defined by 
\begin{equation} \label{choice-of-c0} 
8^4 A_1 c_0^2 = c.  
\end{equation}  
Applying Theorem \ref{thm:GK} with this $D$ and $n=2$, we find a nontrivial polynomial $P_0$ of degree at most $D \leq 2c_0 {N}^{\frac{1}{2}}$ such that
\begin{align} 
& M_0 := \# \bigl\{\text{components $O_j$ in $\mathbb R^2 \setminus Z(P_0)$} \bigr\} \leq A_1 D^2 \leq 4 A_1 c_0^2 N, \label{number-of-components} \\ 
&\sup_j \#(\om \cap O_j) \leq A_1 N D^{-2} \leq A_1 c_0^{-2}. \label{number-of-points-per-component} 
\end{align} 
\vskip0.1in
\noindent We set $\om_j = \om \cap O_j$, and note that this gives rise to the decomposition 
\begin{align} 
\Omega = \Omega^{\ast} \cup \Omega^{\ast \ast}, \; &\text{ where } \; \Omega^{\ast} := \bigcup_j \Omega_j \text{ and } \Omega^{\ast \ast} := \Omega \cap Z(P_0); \text{ consequently } \nonumber \\  
& ||{H_\om}||_{2 \rightarrow 2} \leq ||H_{\Omega^{\ast}}||_{2 \rightarrow 2}  + ||H_{\Omega^{\ast \ast}}||_{2 \rightarrow 2}. \label{Hom-split} 
 \end{align} 
The induction will close provided we verify the following two inequalities: 
\begin{equation} \label{ineq-12} ||H_{\Omega^{\ast \ast}}||_{2 \rightarrow 2} \leq \frac{A}{2} \sqrt{d} \log N, \qquad  ||H_{\Omega^{\ast}}||_{2 \rightarrow 2} \leq \frac{A}{2} \sqrt{d} \log N. \end{equation} 
\vskip0.1in
\noindent Let us prove the first inequality in \eqref{ineq-12}. Note that \eqref{N0-case2} and \eqref{choice-of-c0} ensure that
\begin{equation}\label{eqn:degree0}
 D \leq 2c_0 N^{1/2} \leq d/4^2.
\end{equation}
Since $(N,D) < (N,d)$, we may apply the induction hypothesis to get 
\begin{align*} 
\norm{H_{\om^{\ast \ast}}}_{2 \rightarrow 2}  \leq \mathfrak C^{\ast}_2(N;D)  \leq A {D}^{1/2} \log N  \leq \frac{A}{2} \sqrt{d} \log N. 
\end{align*}
\vskip0.1in  
\noindent We turn now to the second inequality in \eqref{ineq-12}, i.e., the contribution from $\Omega^{\ast}$. This will be obtained using Theorem \ref{thm:ortho}. The set $\mathcal O$ is chosen so that $\mathcal O \subseteq \Omega^{\ast} \subseteq Z(P)$, with $\#(\mathcal O \cap O_j) =1$ for every $j$. Therefore, by \eqref{number-of-components},  $\#(\mathcal O) := M_0 \leq 4A_1 c_0^2 N$. Since $\deg P_0 \leq D$, almost every line in $\mathbb R^2$ intersects $Z(P_0)$ in at most $D$ points; hence it can intersect at most $D+1$ components $\{O_j\}$. Therefore, by Theorem \ref{thm:ortho} and the bound on $\#( \om_j)$ from \eqref{number-of-points-per-component}, 
\begin{align} 
\norm{H_{\Omega^{\ast}}}_{2 \rightarrow 2} &\leq ||H_{\mathcal O}||_{2 \rightarrow 2} + \sqrt{D+1} \Bigl( \max_j ||H_{\Omega_j}||_{2 \rightarrow 2} + 1 \Bigr) \nonumber \\ 
&\leq ||H_{\mathcal O}||_{2 \rightarrow 2} + \sqrt{D+1} \Bigl( \max_j \#(\Omega_j) + 1 \Bigr) \nonumber \\
&\leq ||H_{\mathcal O}||_{2 \rightarrow 2}+ 2 \sqrt{D} (A_1 c_0^{-2} + 1)\nonumber \\ 
&\leq \mathfrak C_2^{\ast}(M_0; d) + C_0 \sqrt{D}, \label{Omega-star}
\end{align} 
where $C_0 := 2(A_1 c_0^{-2}+1)$ is an absolute constant depending on $c_0$. 
By choosing a sufficiently large constant $A$ satisfying 
\begin{equation}\label{choice-of-A-3}
A \geq C_0,
\end{equation}
we see, from \eqref{eqn:degree0} and\eqref{Omega-star}, that
\[
\norm{H_{\Omega^{\ast}}}_{2 \rightarrow 2} \leq \mathfrak C_2^{\ast}(M_0; d) + \frac{A}{4} \sqrt{d} \leq \mathfrak C_2^{\ast}(M_0; d) + \frac{A}{4} \sqrt{d} \log N.
\]
\vskip0.1in
\noindent The second inequality in \eqref{ineq-12} is therefore a consequence of 
\[ \mathfrak C_2^{\ast}(M_0; d) \leq \frac{A}{4} \sqrt{d} \log N. \]
To prove this, we observe that \eqref{choice-of-c0} and \eqref{number-of-components} imply that $M_0 < N/2$. Hence $(M_0, r) < (N, r)$ for any choice of $r$. In addition, \eqref{choice-of-c0} implies that $2 \sqrt{M_0} < d/2 $, in view of \eqref{N0-case2} and \eqref{number-of-components}. Therefore, the identity \eqref{C2-dlarge-N} dictates that $\mathfrak C_2^{\ast}(M_0; d) 
= \mathfrak C_2^{\ast}(M_0; 2 \sqrt{M_0})$. Applying the induction hypothesis on  $(M_0, 2\sqrt{M_0}) < (N, d)$, we obtain
\begin{align*} 
\mathfrak C_2^{\ast}(M_0; d) 
= \mathfrak C_2^{\ast}(M_0; 2 \sqrt{M_0})
&\leq  A \sqrt{2} M_0^{1/4}\log M_0 \\
&\leq A \sqrt{2} (4A_1 c_0^2 )^{1/4} N^{1/4} \log N  \\
&\leq A(2^4 c_0^{2} A_1)^{1/4}  (c^{-1} d^2)^{1/4} \log N \\ 
&= \frac{A}{4} \sqrt{d} \log N,
\end{align*} 
where we used \eqref{number-of-components}, \eqref{N0-case2}, and \eqref{choice-of-c0}. 
This completes the estimation for $||H_{\Omega^{\ast}}||_{2 \rightarrow 2}$, and hence the proof. As a summary of the size requirements for the constant $A$, we note that $A$ is chosen to satisfy \eqref{choice-of-A-1}, \eqref{A4}, and \eqref{choice-of-A-3}.  
\end{proof} 

\section{Improved estimates in $\mathbb R^3$, Part 2: Proof of Theorem \ref{thm:3dgen}} \label{sec:3d}
\noindent Theorems \ref{thm:GK} and \ref{thm:3d}, along with Theorem \ref{thm:ortho}, are the main ingredients of this proof.   
\begin{proof} 

Set $\omega(N) := {h(N)}/{\log N}$. The assumptions made in \eqref{conditions-on-h} on $h$ ensure that for every $\epsilon \in (0,1)$, there exists an integer $R_{\epsilon} \gg 1$ such that 
\begin{equation} \label{what-is-epsilon}
\omega(N) \leq \epsilon \ll 1,  \, N \omega(N)^4 > \epsilon^{-1}, \,  h(N) > \epsilon^{-1}, \, h((\log N)^4) < \epsilon h(N). 
\end{equation} 
for all large $N \geq R_{\epsilon}$. For us, $\epsilon > 0$ will be an absolute constant whose exact value will be determined in the sequel; see \eqref{condition-epsilon-1} below.  As in Theorem \ref{thm:3d}, we also define 
\begin{equation}
\mathfrak C_2(N) := \bigl\{ ||H_{\Omega}||_{2 \rightarrow 2} :  \Omega \subseteq \mathbb R^2, \; \#(\Omega) = N \bigr\}. 
\end{equation}   
The conclusion \eqref{eqn:3dgen} of Theorem \ref{thm:3dgen} is equivalent to finding an absolute constant $C$, depending only on $h$ and $\epsilon$, such that 
\begin{equation} \label{eqn:3dgen:rephrased}
\mathfrak C_2(N) \leq C N^{\frac{1}{4}} h(N). 
\end{equation}  
\vskip0.1in
\noindent We prove \eqref{eqn:3dgen:rephrased} by induction on $N$. Without loss of generality and in view of \eqref{conditions-on-h}, we may assume that the constant $C$ in \eqref{eqn:3dgen:rephrased} is large enough to satisfy $N \leq C N^{1/4} h(N)$ for all $N \leq R_{\epsilon}$. For such $N$, the inequality \eqref{eqn:3dgen:rephrased} would follow from the trivial bound $||H_{\Omega}||_{2 \rightarrow 2} \leq N$. This covers the base of the induction. \vskip0.1in
\noindent Suppose now that $N > R_{\epsilon}$ and that \eqref{eqn:3dgen:rephrased} holds for all $N' < N$. Given a finite set $\Omega \subseteq \mathbb R^2$ of cardinality $N$, we choose 
\[ d_0 = \sqrt{N} (\omega(N))^2.\] 
Theorem  \ref{thm:GK} then generates a nontrivial polynomial $P_0$ of degree at most $d_0$ such that $\mathbb R^2 \setminus Z_{\mathbb R}(P_0)$ is the disjoint union of at most $A_1 d_0^2 = A_1 N (\omega(N))^4$ connected components  $O_j$, each containing at most $A_1 N d_0^{-2} = A_1 (\omega(N))^{-4}$ points of $\Omega$. As in the proof of Theorem \ref{thm:3d}, we set $\Omega_j = \Omega \cap O_j$, and write \[ \Omega = \Omega^{\ast} \bigcup \Omega^{\ast \ast}, \quad \text{ where } \quad\Omega^{\ast} = \bigcup_j \Omega_j \text{ and } \Omega^{\ast \ast} = \Omega \cap Z_{\mathbb R}(P_0). \]  
This leads to the same decomposition of $H_{\Omega}$ as in \eqref{Hom-split}. As before, the inductive step will close if we are able to show that for a sufficiently large constant $C$ depending only on $h$ and $\epsilon$, the following estimates hold: 
\begin{equation} ||H_{\Omega^{\ast}}||_{2 \rightarrow 2} \leq \frac{C}{2} N^{\frac{1}{4}} h(N) \quad \text{ and } \quad ||H_{\Omega^{\ast \ast}}||_{2 \rightarrow 2} \leq \frac{C}{2} N^{\frac{1}{4}} h(N). \label{two-estimates-to-show} 
\end{equation}
\noindent Theorem \ref{thm:3d} controls the second term in \eqref{two-estimates-to-show}, namely the contribution from $\Omega^{\ast \ast}$. Applying the conclusion \eqref{Zp-3d} of this theorem, we obtain 
\[ ||H_{\Omega^{\ast \ast}}||_{2 \rightarrow 2} \leq A \sqrt{d_0} \log N \leq A \bigl[\sqrt{N} \omega(N)^2 \bigr]^{\frac{1}{2}} \log N = A N^{\frac{1}{4}} h(N). \]   
As long as the constant $C$ in \eqref{eqn:3dgen} is chosen larger than $2A$, where $A$ is the absolute constant from \eqref{Zp-3d}, the second inequality in \eqref{two-estimates-to-show} follows.   
\vskip0.1in
\noindent The analysis of the first term in \eqref{two-estimates-to-show} is very similar to its counterpart in Theorem \ref{thm:3d}, so we only sketch the details. We apply Theorem \ref{thm:ortho} with $\Omega_j = \Omega \cap O_j$, $||E||_{L^{\infty}} \leq d_0+1$ and $\mathcal O \subseteq \Omega$. Suppose that the constant $\epsilon\in (0,1)$ in \eqref{what-is-epsilon} is chosen small enough so that 
\begin{equation} \label{condition-epsilon-1}
10A_1^{1/4} \epsilon < 1.
\end{equation} 
Then, on one hand, the assumption $N \geq R_{\epsilon}$ and \eqref{what-is-epsilon} yield that the number of components $O_j$, i.e., $\#(\mathcal O)$ satisfies
\[  \#(\mathcal O) \leq A_1 d_0^2 = A_1 (\omega(N))^4 N \leq A_1 \epsilon^4 N.\]
On the other hand, the third relation in \eqref{what-is-epsilon} combined with \eqref{condition-epsilon-1} shows that for every $j$,
 \[\#(\Omega_j) \leq A_1 (\omega(N))^{-4} < A_1 \epsilon^4 (\log N)^4 < (\log N)^4.\] Thus, we have $\#(\mathcal O) \leq N/2$ and $\#(\Omega_j)\leq N/2$, so the induction hypothesis in $N$ applies to both $H_{\mathcal O}$ and $H_{\Omega_j}$. 
Invoking the relation \eqref{almost-ortho} from Theorem \ref{thm:ortho}, and combining it with the induction hypothesis and the bounds for $\#(\mathcal O)$ and $\#(\Omega_j)$ given above, we obtain 
\begin{align*}
||H_{\Omega^{\ast}}||_{2 \rightarrow 2} 
&\leq ||H_{\mathcal O}||_{2 \rightarrow 2} + ||E||_{\infty}^{\frac{1}{2}} \Bigl( \sup_j ||H_{\Omega_j}||_{2 \rightarrow 2} + 1 \Bigr) \\ 
&\leq \mathfrak C_2(A_1 \epsilon^4 N) + \sqrt{d_0 +1} \Bigl[ \mathfrak C_2 \bigl(A_1 \omega(N)^{-4} \bigr) + 1 \Bigr] \\ 
&\leq C (A_1 \epsilon^4 N)^{\frac{1}{4}} h(A_1 \epsilon^4 N) + 4 C \sqrt{d_0} (A_1  \omega(N)^{-4})^{\frac{1}{4}}  h(A_1 \omega(N)^{-4}) \\ 
&\leq C (A_1 \epsilon^4 N)^{\frac{1}{4}} h(N) + 4C A_1^{\frac{1}{4}} N^{\frac{1}{4}}h((\log N)^{4}))  \\
&= C N^{\frac{1}{4}} h(N) \Bigl[ A_1^{1/4} \epsilon + 4 A_1^{\frac{1}{4}} \frac{h((\log N)^{4}))}{h(N)}  \Bigr] \\
&\leq C N^{\frac{1}{4}} h(N) \Bigl[ A_1^{1/4} \epsilon + 4 A_1^{\frac{1}{4}} \epsilon \Bigr] < \frac{C}{2} N^{\frac{1}{4}} h(N),  
\end{align*} 
where in the last display we have used the third requirement in \eqref{what-is-epsilon}, and also \eqref{condition-epsilon-1}. This proves the first estimate in \eqref{two-estimates-to-show} and hence completes the proof of the theorem.  
\end{proof}
\section{Direction sets contained in varieties: Proof of Theorem \ref{thm:higherdim}} \label{proof-thm-highd-section} 
\subsection{Algebraic geometry preliminaries}\label{sec:poly}
It remains to prove Theorem \ref{thm:higherdim}. Its proof utilizes certain tools, some of which are classical in the algebraic geometry literature, and some that have emerged from recent developments in polynomial partitioning. We collect the relevant facts and definitions in this section. The proof of Theorem \ref{thm:higherdim} is given in section \ref{section:proof:higherdim}. 
\subsubsection{Definitions} 
An algebraic {\em{variety}} $V$ in $\C^n$ is a set of the form
\[ V = Z_{\C}(P_1, \ldots,P_{l}) := \{ z = (z_1, \cdots, z_n) \in \C^{n} : P_1(z)=\cdots = P_{l}(z)=0 \}, \]
where $P_1,\cdots,P_l\in \C[z_1,\cdots, z_n]$ are polynomials. A variety is said to be {\em{irreducible}} if it cannot be written as the union of two strictly smaller varieties. It is well-known \cite[Proposition I.5.3]{Mumford} that any variety can be uniquely expressed as the union of irreducible varieties, also called irreducible components. Each (complex) variety $V$ in $\C^n$ generates a real variety $V(\R)$ in $\R^n$, by setting $V(\R) := V \cap \bigl[ \R^n + i \{\vec{0}\} \bigr]$; in other words, \[V(\mathbb R) = Z_{\mathbb R}(P_1, \ldots, P_l) =  \{ x  \in \R^{n} : P_1(x)=\cdots = P_{l}(x)=0 \}. \]  
In what follows, we will always be working in $\mathbb R^n$. So, even though $V$ is a priori defined in $\mathbb C^n$ and its intrinsic properties (such as dimension and degree) will be defined therein, our analysis will take place on $V(\mathbb R)$. Similarly, for a given polynomial $P \in \mathbb C[z_1, \cdots, z_n]$, we will focus on $Z_{\mathbb R}(P)$. Henceforth, we will drop the suffix $\mathbb R$, and denote the zero set of $P$ in $\mathbb R^n$ simply by $Z(P)$.   
\vskip0.1in
\noindent The concepts of dimension and degree are central to the notion of a variety in $\C^n$. We recall them here, following the treatment of \cite[Section 2]{MP}. More extensive discussions may be found in \cite{{CLO}, {Harris}, {Hartshorne}, {Kempf}}. The {\em{dimension}} $\dim V$ of a variety $V$ in $\C^n$ is the smallest integer $0\leq m\leq n$ such that a generic $(n-m)$-dimensional complex affine subspace $S$ of $\C^n$ intersects $V$ in finitely many points. The {\em{degree}} of $V$ is the number of intersections, which is the same for all generic $S$. To clarify the meaning of ``generic", let us consider subspaces $S = S(a)$ of the form \[ z_{i + n-m} = a_{i0} + \sum_{j=1}^{n-m} a_{ij} z_j, \quad 1 \leq i \leq m.\] We call a subspace $S = S(a)$ ``generic'' if the vector of constants $a = (a_{ij} : 1 \leq i \leq m, 1 \leq j \leq n-m)$ does not lie in the zero set of a certain nontrivial polynomial depending on $V$. Thus, almost all subspaces $S$ is the sense of measure are generic. 
Alternatively and equivalently, one can define the dimension of an irreducible variety $V$ to be the largest integer $0\leq m \leq n$  for which there exists a sequence 
\[ \emptyset \neq V_0 \subsetneq V_1 \subsetneq \cdots \subsetneq V_m = V \]
of irreducible varieties between $\emptyset$ and $V$. When $V$ has several irreducible components $\{ V_j\}$, then $\dim V$ is defined to be the maximum of $\dim V_j$. We note that $\mathbb C^n$ is itself an algebraic variety, whose dimension is $n$ and whose degree is $1$.
\vskip0.1in
\subsubsection{Dimension of intersection of varieties} 
The proof of Theorem \ref{thm:higherdim} involves induction on the dimension of an algebraic variety. The following result, a consequence of the well-known principal ideal theorem \cite[Theorem 2.6.3]{Kempf}, \cite[Section I.8]{Mumford}, provides an ingredient for the inductive step, by ensuring a dimension drop in the intersection of the original variety with the zero set of certain polynomials.   
\begin{lem} \label{lem:dim-intersection} 
Let $V$ be an $m$-dimensional variety in $\mathbb C^n$, and let $Q \in \mathbb C[z_1, \cdots, z_n]$ be a polynomial that does not vanish identically on any irreducible component of $V$. Then $\text{dim}(V \cap Z(Q)) < \text{dim}(V)$.  
\end{lem}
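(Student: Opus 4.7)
The plan is to reduce the claim to the irreducible case and then invoke the principal ideal theorem (Krull's Hauptidealsatz) directly, as already cited in the statement. First I would decompose $V$ into its irreducible components $V = V_1 \cup \cdots \cup V_k$, where $\dim V_i \leq m$ for each $i$ and equality holds for at least one index. Since $V \cap Z(Q) = \bigcup_i (V_i \cap Z(Q))$ and the dimension of a variety is the maximum of the dimensions of its irreducible components, it suffices to establish $\dim(V_i \cap Z(Q)) < m$ for every $i$. For components $V_i$ with $\dim V_i < m$, this is automatic, since $V_i \cap Z(Q) \subseteq V_i$ forces $\dim(V_i \cap Z(Q)) \leq \dim V_i < m$.

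The substantive case is therefore when $V_i$ is irreducible of dimension exactly $m$. Here the hypothesis on $Q$ guarantees that $Q$ does not vanish identically on $V_i$. The next step is to pass to the coordinate ring $A := \mathbb{C}[z_1, \ldots, z_n]/I(V_i)$, which is an integral domain of Krull dimension $m$ because $V_i$ is irreducible of dimension $m$. The image $\bar{Q}$ of $Q$ in $A$ is nonzero and (being in a domain) a non-zero-divisor. By Krull's principal ideal theorem, every minimal prime $\mathfrak{p}$ of $(\bar{Q})$ in $A$ has height exactly $1$. Since $A$ is a finitely generated domain over an algebraically closed field, the dimension formula $\dim(A/\mathfrak{p}) + \operatorname{ht}(\mathfrak{p}) = \dim A$ gives $\dim(A/\mathfrak{p}) = m-1$. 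Passing back to the geometric side, this says that every irreducible component of $V_i \cap Z(Q)$ has dimension $m-1$, so $\dim(V_i \cap Z(Q)) = m - 1 < m$.

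I do not anticipate any real obstacle here; the only place to be careful is to treat the reducible case separately, since Krull's theorem requires the underlying coordinate ring to be a domain, and to explicitly use the irreducibility hypothesis by choosing the component and then invoking the standard dimension-codimension identity valid for finitely generated domains over a field. Combining the two cases yields the required strict inequality $\dim(V \cap Z(Q)) < \dim V$.
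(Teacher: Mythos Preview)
Your proposal is correct and follows exactly the route the paper indicates: the paper does not supply its own proof but simply states the lemma as a consequence of the principal ideal theorem (citing Kempf and Mumford), which is precisely what you invoke after reducing to the irreducible case. Your added care in handling the reducible components and in spelling out the dimension formula $\dim(A/\mathfrak{p}) + \operatorname{ht}(\mathfrak{p}) = \dim A$ is appropriate and fills in the details the paper leaves to the references.
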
 
\subsubsection{Degree of intersection of varieties}
We will also need to control the degree of a variety arising from the intersection of a given variety with the zero set of a polynomial. 
\begin{lem} (A generalized Bezout's inequality \cite[Lemma 2.2]{MP}) \label{gen-Bezout} 
Let $V \subseteq \mathbb C^n$ be an irreducible variety of dimension $m$, and let $P \in \mathbb C[z_1, \cdots, z_n]$ be a polynomial that does not vanish identically on $V$. Suppose that $W_1, \cdots, W_k$ are the irreducible components of $V \cap Z_{\mathbb C}(P)$. Then each of the components $W_i$ has dimension $m-1$, and 
\[ \deg(V \cap Z_{\C}(P) )= \sum_{i=1}^{k} \deg (W_i) \leq \deg(V) \deg(P). \] 
\end{lem}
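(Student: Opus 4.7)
The plan breaks into three steps. First, I would pin down the dimension of each irreducible component $W_i$ of $V\cap Z_{\mathbb C}(P)$. The upper bound $\dim W_i \leq m-1$ is exactly Lemma \ref{lem:dim-intersection} applied with $Q=P$: since $V$ is irreducible and $P$ does not vanish identically on $V$, the hypothesis of that lemma is satisfied. For the matching lower bound, I would invoke Krull's Hauptidealsatz (the principal ideal theorem, \cite[Theorem 2.6.3]{Kempf}, the same reference used for Lemma \ref{lem:dim-intersection}), which asserts that every irreducible component of the zero set of a single nonvanishing function on $V$ has codimension exactly one in $V$. Combining the two bounds gives $\dim W_i = m-1$.

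Next, the equality $\deg(V\cap Z_{\mathbb C}(P)) = \sum_i \deg(W_i)$ is a purely definitional unpacking. By the definition recalled in Section \ref{sec:poly}, the degree of a variety of pure dimension $m-1$ equals the number of intersection points with a generic complex affine subspace $S$ of complementary dimension $n-m+1$. For $S$ chosen outside a proper algebraic subset of the parameter space — in particular so that $S$ avoids the lower-dimensional pairwise intersections $W_i\cap W_j$ (which have dimension at most $m-2$) and meets each $W_i$ transversally in $\deg(W_i)$ distinct points — the sets $S\cap W_i$ are finite and pairwise disjoint, and summing the cardinalities yields the claim.

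Third, I would prove the inequality $\sum_i \deg(W_i) \leq \deg(V)\deg(P)$ by cutting $V$ down to a curve first and then applying classical Bezout. Fix a generic affine subspace $S\subseteq \mathbb C^n$ of dimension $n-m+1$. A standard fact (obtained by iterating the generic-hyperplane-section-preserves-degree principle one fewer time than the definition of $\deg V$ requires) says that $C := V\cap S$ is then a one-dimensional algebraic set with $\deg(C) = \deg(V)$. Now $S\cap V\cap Z_{\mathbb C}(P) = C\cap Z_{\mathbb C}(P)$, and the one-variable Bezout bound on each irreducible component of $C$ gives $|C\cap Z_{\mathbb C}(P)| \leq \deg(C)\deg(P) = \deg(V)\deg(P)$, provided $P$ does not vanish identically on any component of $C$. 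Since the left-hand side also equals $\sum_i \deg(W_i)$ by the counting argument from the previous paragraph (for the same generic $S$), this completes the bound.

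The main obstacle is the genericity argument in the third step: one must choose a single $S$ that simultaneously witnesses the count $\sum_i \deg(W_i)$, realizes the equality $\deg(C)=\deg(V)$ with $C$ of pure dimension one, and avoids being contained in $Z_{\mathbb C}(P)$ on any component of $C$. Each condition fails only on a proper Zariski-closed subset of the affine parameter space of subspaces $S$ (for the last, the component-by-component nonvanishing follows from irreducibility of $V$ together with $P\not\equiv 0$ on $V$), so the intersection of the good loci is nonempty. Once such an $S$ is fixed, the rest of the argument is bookkeeping.
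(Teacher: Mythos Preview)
The paper does not actually prove this lemma: it is stated as a citation of \cite[Lemma 2.2]{MP} and used as a black box, so there is no in-paper proof to compare against. Your sketch is therefore going beyond what the paper does, and the relevant question is whether it stands on its own.

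Your argument is essentially the standard one and is sound in outline. Step 1 (dimension exactly $m-1$) is correct and properly sourced to the principal ideal theorem. Step 2 (additivity of degree over irreducible components for a pure-dimensional variety) is a routine genericity argument, as you say. Step 3 is the right idea: reduce to the curve case by slicing with a generic $(n-m+1)$-plane $S$, then apply the classical Bezout bound for curves versus hypersurfaces.

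The one place where your writeup is a bit thin is the clause ``the component-by-component nonvanishing follows from irreducibility of $V$ together with $P\not\equiv 0$ on $V$.'' This is true for generic $S$, but the reason is not quite what you wrote: irreducibility of $V$ does not guarantee that $C=V\cap S$ is irreducible (Bertini can fail at the last cut to dimension one). The correct justification is that if $P$ vanished on some one-dimensional component $C_j$ of $C$, then $C_j\subseteq V\cap Z_{\mathbb C}(P)$, hence $C_j$ lies in some $W_i$; but $\dim W_i = m-1$ and $\dim S = n-m+1$, so for generic $S$ the intersection $S\cap W_i$ is zero-dimensional and cannot contain a curve. This is again a proper Zariski-closed bad locus in the space of $S$'s, so it can be folded into your genericity intersection. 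With that clarification, the argument goes through.
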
  
\noindent {\em{Remarks: }} The above bound extends, in particular, to possibly reducible varieties $V$ in $\C^n$ such that each irreducible component of $V$ has dimension $m$. To see this, let $\{V_j\}$ be the irreducible components of $V$. By assumption, each $V_j$ has dimension $m$. Then by applying Lemma \ref{gen-Bezout} to each $V_j \cap Z_{\C}(P)$, we get
\begin{equation}\label{eqn:Bezout}
\begin{split}
\deg(V \cap Z_{\C}(P) ) &\leq \sum_j \deg(V_j \cap Z_{\C}(P) ) \\
&\leq \sum_j \deg(V_j) \deg (P) = \deg (V) \deg(P).
\end{split}
\end{equation}

\subsubsection{Polynomial partitioning} The polynomial partitioning theorem due to Guth and Katz, namely Theorem \ref{thm:GK}, was an important ingredient in our proof of Theorem \ref{thm:3dgen}. Not surprisingly, our proof of the higher dimensional variant Theorem \ref{thm:higherdim} requires a refinement of similar partitioning techniques. Using polynomials for efficient partitioning of finite point sets in low dimensional varieties is an active avenue of research; see e.g. \cite{MP, BS, Za, Walsh}. Among these, the following generalization of Theorem \ref{thm:GK}, due to Matou\v{s}ek and Pat\'{a}kov\'{a} \cite{MP}, will be a key component of our proof.

\begin{theorem}[{\cite[Lemma 3.1]{MP}}] \label{thm:polyvar} 
Let $V$ be any variety in $\C^n$ such that each irreducible component of $V$ has dimension $m$. Assume that $\om\subset V(\R)$ is a finite set of $N$ elements. Then for any given $D\geq 1$, there is a polynomial $P\in \R[x_1,\ldots,x_{n}]$ of degree at most $D$ such that $P$ does not vanish identically on each irreducible component of $V$ and each connected component of $\R^{n} \setminus Z(P)$ contains at most $C_m N D^{-m}$ elements of $\om$, where $C_m$ is an absolute positive constant that depends only on $m$. 
\vskip0.1in
\noindent  
\end{theorem}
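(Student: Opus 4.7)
My plan is to prove Theorem \ref{thm:polyvar} by iterated polynomial ham-sandwich, adapting the Stone--Tukey / Guth--Katz paradigm underlying Theorem \ref{thm:GK} to the intrinsic geometry of the variety $V$. The workhorse will be the following \emph{simultaneous bisection lemma}: given any finite collection $\{S_1,\dots,S_M\}$ of finite subsets of $V(\R)$, there exists a polynomial $q\in\R[x_1,\dots,x_n]$ of degree at most $c_m M^{1/m}$ such that, for every $i$, each of $S_i\cap\{q>0\}$ and $S_i\cap\{q<0\}$ contains at most $|S_i|/2$ elements, and $q$ does not vanish identically on any irreducible component of $V$. This follows from the classical Borsuk--Ulam argument of Stone--Tukey applied in the real quotient space $\mathcal R_d(V):=\R[x_1,\dots,x_n]_{\le d}/I_{\le d}(V)$ (where $I(V)$ is the vanishing ideal of $V(\R)$), provided one has the Hilbert-function estimate $\dim_\R\mathcal R_d(V)\ge c_m' d^m$ in force. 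This estimate follows, for $d$ above a threshold depending only on $m$, from the fact that the Hilbert polynomial of $V$ has leading term $\deg(V)\cdot d^m/m!$ with $\deg(V)\ge 1$; in the small-$d$ regime one falls back on $\dim\R[x_1,\dots,x_n]_{\le d}\gtrsim d^m$.

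The global construction is iterative. Set $\mathcal F_0:=\{\Omega\}$, a single set of cardinality $N$. At stage $j\ge 0$, $\mathcal F_j$ will consist of $2^j$ pairwise disjoint subsets of $\Omega$, each of cardinality at most $N/2^j$. I apply the bisection lemma to $\mathcal F_j$ to produce a polynomial $p_{j+1}$ of degree $O((2^j)^{1/m})$ that simultaneously halves every member of $\mathcal F_j$; the resulting $2^{j+1}$ halves form $\mathcal F_{j+1}$. After $k:=\lceil m\log_2 D\rceil$ iterations, set
\[
P \;:=\; \prod_{j=1}^{k} p_j.
\]
Summing the geometric series yields
\[
\deg P \;\le\; \sum_{j=1}^{k} c_m\,(2^{j-1})^{1/m} \;\le\; C_m\, 2^{k/m} \;\le\; C_m' D,
\]
while every member of $\mathcal F_k$ has cardinality at most $N/2^k \le N/D^m$. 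A final rescaling of $D$ (absorbing $C_m'$ into the final constant) brings $\deg P \le D$. Any connected component of $\R^n\setminus Z(P)$ lies entirely within one of the at most $2^k$ ``sign cells" cut out by the sign vector of $(p_1,\dots,p_k)$, hence meets $\Omega$ in a subset of some member of $\mathcal F_k$, giving the desired bound $\le C_m N/D^m$.

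The non-vanishing condition on irreducible components is enforced at each step by a genericity argument. Within the affine subspace of $\mathcal R_d(V)$ consisting of polynomials realizing the simultaneous bisection (which is nonempty by ham-sandwich and is defined by the strict-sign conditions $\#\{q>0\}\le|S_i|/2$, $\#\{q<0\}\le|S_i|/2$), the subset of polynomials that vanish on any given irreducible component $V_i$ of $V$ is a proper linear subspace. Since $V$ has only finitely many such components, a generic choice of $p_{j+1}$ lies outside their union; the strict-sign bisection property is preserved under such small perturbation, so the bisection property is not disturbed. Because each $V_i$ is irreducible and $P=\prod p_j$, the product vanishes on $V_i$ iff some $p_j$ does, so ensuring the non-vanishing for each factor suffices.

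The main technical obstacle I expect is the Hilbert-function input: one must establish $\dim_\R \mathcal R_d(V)\ge c_m d^m$ with a constant $c_m$ depending only on $m$ and not on $\deg(V)$ or on the ambient dimension $n$. For $d$ much larger than $\deg V$ this is standard via the Hilbert polynomial, but at the initial iterations of the recursion (when $2^j$ is small) a separate argument is needed, and the various early-stage constants must be absorbed into a single $m$-dependent factor without contaminating the final constant. A secondary issue is carrying out the Borsuk--Ulam / ham-sandwich argument in the restricted space $\mathcal R_d(V)$, where one must verify that the symmetric continuous map from the sphere in $\mathcal R_d(V)$ into $\R^M$ (defined by the $M$ bisection defects) is well-defined on a possibly singular real locus; once the dimension count $\dim\mathcal R_d(V)>M$ is established, this is handled by the standard Borsuk--Ulam argument applied to polynomials rather than to their zero sets.
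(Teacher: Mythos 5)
You are proving a statement the paper itself does not prove: Theorem \ref{thm:polyvar} is quoted from \cite{MP}, where it is obtained by a generic linear projection $\pi:\C^n\to\C^m$. One applies Theorem \ref{thm:GK} in $\R^m$ to $\pi(\om)$ and pulls the partitioning polynomial back; the point is that a generic $\pi$ is dominant on every $m$-dimensional irreducible component of $V$, so the pullback of a nonzero polynomial cannot vanish identically on any component, and the constant is exactly the Guth--Katz constant in $\R^m$ (as the paper's remark after the theorem notes). Your intrinsic iterated ham-sandwich route is genuinely different, but as written it has two gaps, one of them fatal.

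First, the Hilbert-function input is incorrect as formulated. You quotient by the vanishing ideal of $V(\R)$, but $V(\R)$ may be empty or of real dimension far below $m$ (e.g.\ $V=\{z_1^2+z_2^2+z_3^2=0\}\subset\C^3$ has $m=2$ while $V(\R)=\{0\}$), in which case $\dim_\R\mathcal R_d(V)$ does not grow like $d^m$; moreover the range of $d$ where the Hilbert function agrees with the Hilbert polynomial is governed by the regularity of $V$, not by $m$ alone, and the ``fallback'' to $\dim\R[x_1,\ldots,x_n]_{\le d}$ bounds the wrong space. This part is repairable, because the quotient is not needed for the counting step: $\dim\R[x_1,\ldots,x_n]_{\le d}\ge\binom{d+m}{m}$ already yields simultaneous bisection of $M$ sets in degree $O_m(M^{1/m})$, and indeed the cell-count half of the conclusion (with an $n$-dependent constant) follows directly from Theorem \ref{thm:GK} in $\R^n$ applied with degree about $D^{m/n}\le D$. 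The real content of the lemma is the component-wise non-vanishing, and here your mechanism fails. The set of simultaneous bisectors is not an affine subspace (the defining conditions are sign-count inequalities, not linear equations), and the bisection property is not stable under small perturbations: points lying on $Z(q)$ can migrate to one open side and push a count above $|S_i|/2$, so ``genericity within the bisecting set'' is not available. Passing to the quotient does not rescue this either: Borsuk--Ulam only produces a nonzero class, i.e.\ a polynomial not vanishing on all of $V$, and since every representative of a class has the same restriction to $V$, vanishing on an individual irreducible component cannot be removed by changing representatives. Since non-vanishing on each component is precisely what the paper needs from this lemma (it drives the dimension drop via Lemma \ref{lem:dim-intersection} and the Bezout bound in the induction), this is a genuine gap; the generic-projection argument of \cite{MP} is exactly the device that supplies that property for free.
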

\noindent {\em{Remarks: }}
\begin{enumerate}[1.]
\vskip0.1in
\item \label{rem:polyvar}
Since each connected component of $V(\R) \setminus Z(P)$ is a subset of some connected component of $\R^n \setminus Z(P)$, one can also conclude in Theorem \ref{thm:polyvar} that each connected components of $V(\R) \setminus Z(P)$ contains at most $C_m N D^{-m}$ elements of $\om$. In the proof of Theorem \ref{thm:higherdim}, in order to analyze the subset of $\Omega$ contained in $V(\R)$, we will apply Theorem \ref{thm:ortho} with the connected sets $\{O_j \}$ being the connected components of $V(\R) \setminus Z(P)$.
\vskip0.1in 
\item The strength of Theorem \ref{thm:polyvar} lies in its applicability to an arbitrary algebraic variety of any dimension, regardless of whether it is irreducible or not. It is also important for our applications that the constant $C_m$ provided by Theorem \ref{thm:polyvar} is uniform for all $m$-dimensional varieties $V$; An inspection of its proof in \cite{MP} shows that $C_m$ depends only on the constant $A_1$ from Theorem \ref{thm:GK} in $\mathbb R^m$.  
\vskip0.1in
\item For {\em{irreducible varieties of large degree}}, the bound $O(N D^{-m})$ can sometimes be replaced by a stronger bound depending on the degree; see \cite{BS,Walsh}. While this could potentially be useful in obtaining a result more precise than Theorem \ref{thm:higherdim}, this strategy seems to require a good quantitative bound on the number of irreducible components of a given variety. We do not pursue this direction here. 
\end{enumerate}

\subsubsection{Connected components in a real algebraic variety} In view of Theorem \ref{thm:ortho} and remark \ref{rem:polyvar} above, we will need to control the number of connected components of $V(\R) \setminus Z(P)$ as well as the number of components intersecting a generic hyperplane $Z(P_u)$. There are many results in the literature that address such issues. In particular, Barone and Basu \cite{BB, BB2} have given a general bound on the number of components depending on various parameters. A nice exposition of a simpler version of their result, which suffices for our purposes, appears in the work of Solymosi and Tao \cite[Theorem A.2]{ST}. It can be stated, combined with \cite[Lemma 4.2]{ST}, as follows.
\begin{theorem}(\cite[Theorem A.2]{ST}) \label{thm:ST} 
Let $V$ be any $m$-dimensional variety in $\C^n$ of degree at most $d$ for some $1 \leq m \leq n$ and $d\geq 1$. Assume that $P\in \R[x_1,\cdots,x_n]$ is a polynomial of degree at most $D$ for some $D\geq 1$. Then the set $V(\R) \setminus Z(P)$ has at most $R_{n,d} D^m$ connected components, where $R_{n, d}$ is a positive constant that depends only on $n,d$.
\end{theorem}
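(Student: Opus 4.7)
The strategy is the classical Oleinik--Petrovskii--Thom--Milnor approach, refined by Barone--Basu to exploit the dimension $m$ of $V$ rather than the ambient dimension $n$: bound the number of connected components by counting critical points of a generic linear function restricted to appropriate smooth loci, then bound these critical points via Bezout-type degree inequalities.

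I would start with a few reductions. Decomposing $V$ into its irreducible components and treating each separately costs only a factor of $\deg(V) \leq d$, so we may assume $V$ is irreducible of dimension $m$. Replacing $P$ with $P - \varepsilon$ for a generic small $\varepsilon > 0$, we may further assume that $P$ does not vanish identically on $V$; a standard limiting/perturbation argument shows that the number of connected components of $V(\R) \setminus Z(P)$ for the original $P$ is bounded above by that for the perturbed $P$, up to an additive $O_{n,d}(1)$ correction. Under this assumption, Lemma \ref{gen-Bezout} yields $\dim(V \cap Z(P)) = m-1$ and $\deg(V \cap Z(P)) \leq dD$.

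Next comes the Morse-theoretic count. For a generic linear form $\ell(x) = \theta \cdot x$, each bounded connected component of the open semialgebraic set $V(\R) \setminus Z(P)$ must contain at least one point where $\ell$ attains a local extremum on its closure. Such a point is one of three types: (i) a critical point of $\ell$ on the smooth locus of $V$, (ii) a singular point of $V$, or (iii) a critical point of $\ell$ restricted to the smooth locus of $V \cap Z(P)$. Unbounded components are absorbed by intersecting $V(\R)$ with a large ball $B_R$; the additional $O_{n,d}(1)$ boundary components contribute to the final constant $R_{n,d}$. Each critical locus is itself an algebraic variety, cut out by the defining equations of $V$ (or of $V \cap Z(P)$) together with the vanishing of certain $(n-m+1)\times(n-m+1)$ minors of a Jacobian matrix built from $\ell$ and the defining polynomials. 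Applying Lemma \ref{gen-Bezout} to these systems: type (i) and (ii) contribute only $O_{n,d}(1)$ points (the count is independent of $D$), while the critical locus of type (iii), being of dimension $m-1$ and degree $O(dD)$, contributes at most $R_{n,d} D^{m}$. Induction on $m$ formalizes this: each slice by $Z(P)$ drops the dimension by one and multiplies the relevant degree by $D$, generating the factor $D^m$ after $m$ iterations.

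The main obstacle is handling singularities cleanly. On a singular real variety a generic linear function need not be Morse, and the Jacobian-rank condition used to isolate critical points can degenerate along entire subvarieties rather than at isolated points, invalidating the Bezout count. The standard remedy, worked out in detail by Barone--Basu, is to work over a real closed extension with an infinitesimal $\eta$ and replace each defining polynomial $P_i$ of $V$ by $P_i + \eta Q_i$ for a generic polynomial $Q_i$ of the same degree; the resulting variety $V_\eta$ is smooth and has no worse number of connected components than $V(\R)$ itself, and similarly one perturbs $P$. One then runs the Morse/Bezout count on $V_\eta$ and transfers back using the Tarski--Seidenberg principle, verifying that the limit $\eta \to 0$ does not decrease the component count. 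Making this perturbation rigorous while preserving the degree bounds at every stage is the technically delicate heart of the argument; the fact that the perturbation polynomials depend on $d$ (and not on $D$) is precisely what allows the constant $R_{n,d}$ to be independent of $D$.
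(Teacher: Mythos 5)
A preliminary remark: the paper offers no proof of this statement at all --- Theorem \ref{thm:ST} is quoted from \cite[Theorem A.2]{ST} (itself an exposition of a simpler case of Barone--Basu \cite{BB}), combined with \cite[Lemma 4.2]{ST} to translate between the complex degree of $V$ and the degrees of real defining polynomials. So there is no in-paper argument to compare against; what you have written is a reconstruction of the literature proof, and in outline (witness points for components obtained from a generic linear form, Bezout-type counts as in Lemma \ref{gen-Bezout}, induction on dimension, infinitesimal perturbation to reach the smooth case) it does follow the Oleinik--Petrovskii--Thom--Milnor/Barone--Basu strategy behind the cited result.

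As a standalone proof, however, the sketch has genuine gaps exactly where the work lies, and two of the claims you do state are incorrect as written. First, the witness count for type (ii) is not meaningful: you assert that singular points of $V$ ``contribute only $O_{n,d}(1)$ points,'' but $\mathrm{Sing}(V)$ can have dimension up to $m-1$ and is in general infinite, so one cannot count its points; the components whose extremum lands on $\mathrm{Sing}(V)$ must be handled either by induction on dimension applied to $\mathrm{Sing}(V)$ (which has degree $O_{n,d}(1)$ and dimension at most $m-1$, contributing $O_{n,d}(D^{m-1})$), or by the smoothing perturbation --- and the latter is precisely the step you defer to Barone--Basu, so the hole is never closed inside the proposal. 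Second, the perturbation claims are not right in the stated form. Replacing $P$ by $P-\varepsilon$ does not yield the inequality you assert: two components of $\{P\neq 0\}$ in $V(\R)$ separated by $Z(P)$ on which $|P|$ is small can merge inside $\{P\neq\varepsilon\}$, and the number of such mergers can be comparable to the total number of components rather than $O_{n,d}(1)$; the classical reduction uses $\{P^2>\varepsilon\}$, which is contained in $\{P\neq 0\}$ so that distinct components cannot merge (and the reduction is unnecessary anyway, since if $P$ vanishes identically on $V$ then $V(\R)\setminus Z(P)$ is empty). Likewise, perturbing the defining equations to $P_i+\eta Q_i$ can annihilate real points altogether (e.g. $x_1^2+x_2^2=0$), so ``$V_\eta$ has no worse number of connected components than $V(\R)$'' is not the correct statement; setting up the inequality in the right direction with degree control is the actual content of Basu--Pollack--Roy \cite{Basu-Pollack-Roy} and Barone--Basu that you cite rather than prove. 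Finally, a bookkeeping gap: the hypothesis controls the degree of $V$ as a complex variety, while the real Morse/Bezout argument needs $V(\R)$ cut out by real polynomials of degree $O_{n,d}(1)$; this translation is exactly what \cite[Lemma 4.2]{ST} supplies and should be made explicit in your reduction.
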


\noindent Our next task is to estimate the number of connected components of $V(\mathbb R) \setminus Z(P)$ that intersect a generic hyperplane. This is a key step in the application of Theorem \ref{thm:ortho}, leading to the estimation of the quantity $E$ therein.
\begin{prop} \label{lem:boundE} 
Let $V$ be a variety in $\C^n$ of degree at most $d$ such that each irreducible component of $V$ has dimension $m$. Suppose that $P\in \R[x_1,\cdots,x_n]$ is a polynomial of degree at most $D$ for some $D\geq 1$. For $u \in \mathbb S^n$, let $E(u)$ be the number of connected components of $V(\R)\setminus Z(P)$ intersecting $Z(P_u)$, where $P_u(y) = u\cdot \inn{y,1}$. Then 
\begin{equation} \label{E-mnd} 
\norm{E}_{L^\infty(\mathbb S^n)} \leq R_{n,d} D^{m-1}, 
\end{equation} 
with the constant $R_{n, d}$ provided by Theorem \ref{thm:ST}. 
\end{prop}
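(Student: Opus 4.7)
The strategy is to reduce $E(u)$ to counting connected components in a lower-dimensional slice. For a fixed $u\in\mathbb S^n$, I will introduce the intersection variety $V' := V \cap Z_{\mathbb C}(P_u)$, namely the cut of $V$ by the complex hyperplane defined by the linear polynomial $P_u$. The plan is first to majorize $E(u)$ by the number of connected components of $V'(\mathbb R)\setminus Z(P)$, and then to exploit the dimension drop from $V$ to $V'$ via Theorem \ref{thm:ST}.

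For the first step, suppose $O_1,\dots,O_K$ are distinct connected components of $V(\mathbb R)\setminus Z(P)$ that each meet $Z(P_u)$. Pick a point $p_i\in O_i\cap Z(P_u)$; then $p_i \in V'(\mathbb R)\setminus Z(P)$. Let $O_i'$ be the connected component of $V'(\mathbb R)\setminus Z(P)$ containing $p_i$. Since $V'(\mathbb R)\setminus Z(P) \subseteq V(\mathbb R)\setminus Z(P)$ and $O_i'$ is connected, $O_i'$ must lie inside the unique component of the larger set containing $p_i$, namely $O_i$. As the $O_i$ are pairwise disjoint, so are the $O_i'$, and the assignment $O_i\mapsto O_i'$ is injective. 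This yields
\[ E(u) \leq \# \bigl\{\text{connected components of } V'(\mathbb R)\setminus Z(P)\bigr\}. \]

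For the second step, restrict attention to the set of $u\in\mathbb S^n$ for which $P_u$ does not vanish identically on any irreducible component of $V$. The complement of this set is contained in a finite union of proper algebraic subvarieties of $\mathbb S^n$ (one per irreducible component of $V$), so it has measure zero. For any such $u$, Lemma \ref{lem:dim-intersection} applied component by component shows that every irreducible component of $V'$ has dimension $m-1$, and the generalized Bezout inequality \eqref{eqn:Bezout} gives $\deg V' \le \deg V\cdot\deg P_u \le d$. Theorem \ref{thm:ST} applied to $V'$ in dimension $m-1$ and degree at most $d$ then yields
\[ \#\bigl\{\text{connected components of } V'(\mathbb R)\setminus Z(P)\bigr\} \le R_{n,d}\, D^{m-1}, \]
so the estimate \eqref{E-mnd} holds for almost every $u\in\mathbb S^n$, and hence in the $L^\infty$ sense.

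The only subtlety is the measure-zero set of ``bad'' directions $u$ on which $P_u$ vanishes identically on some irreducible component of $V$. There $V'$ retains an $m$-dimensional irreducible part, and Theorem \ref{thm:ST} would only furnish the weaker bound $R_{n,d}D^m$; however, this exceptional set contributes nothing to the essential supremum encoded in $\|E\|_{L^\infty(\mathbb S^n)}$, and it is precisely this essential supremum that is consumed downstream in the Plancherel step of the proof of Theorem \ref{thm:ortho}. Apart from this point, the proof is a direct combination of Lemma \ref{lem:dim-intersection}, the Bezout estimate \eqref{eqn:Bezout}, and Theorem \ref{thm:ST}.
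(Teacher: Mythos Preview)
Your proof is correct and follows essentially the same route as the paper's: both reduce $E(u)$ to counting connected components of $\bigl(V(\mathbb R)\cap Z(P_u)\bigr)\setminus Z(P)$ via an injection argument, then control this count by combining the generic dimension drop, the Bezout degree bound, and Theorem \ref{thm:ST}. The only cosmetic differences are that the paper packages the measure-zero claim into a separate appendix lemma (Lemma \ref{lem:dim}) rather than arguing it inline, and phrases the injection via the inclusion $O_k(u)\subseteq O_j\cap Z(P_u)$ rather than by choosing representative points; both versions are equivalent.
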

\begin{proof}
Fix any $u \in \mathbb S^n$. Let $\mathbb O = \{O_j\}$ and $\mathbb O(u) = \{O_k(u)\}$ denote respectively the finite collections of nonempty connected components of $V(\mathbb R) \setminus Z(P)$ and $\bigl[ V(\mathbb R) \cap Z(P_u) \bigr] \setminus Z(P)$. Our main claim is that, for every index $j$ such that $O_j \cap Z(P_u)$ is nonempty, there exists at least one index $k$ such that   
\begin{equation} \label{inclusion-kj} 
O_k(u) \subseteq O_j \cap Z(P_u). 
\end{equation} 
We will prove this claim in a moment. Assuming this for now, we deduce from \eqref{inclusion-kj} that  
\begin{align*}
E(u) &= \# \bigl\{j : O_j \cap Z(P_u) \ne \emptyset \bigr\} \\
&\leq \# \bigl\{k :  O_k(u) \subseteq O_j \cap Z(P_u) \; \text{ for some } j \bigr\} \leq \#(\mathbb O(u))
\end{align*}
In Lemma \ref{lem:dim} of the appendix, we show that $\dim [Z_{\C}(P_u)\cap V] \leq m-1$ for almost every $u\in \mathbb S^n$. Since $\deg V \leq d$ and $\deg P_u = 1$, we know that $\deg(Z_{\C}(P_u)\cap V) \leq d$ by \eqref{eqn:Bezout}. Hence, applying Theorem \ref{thm:ST} with $m$ replaced by $(m-1)$ to the variety $Z_{\C}(P_u)\cap V$, we get $\#(\mathbb O(u)) \leq R_{n, d} D^{m-1}$ for almost every $u\in S^n$. This leads to the desired bound \eqref{E-mnd}. 
\vskip0.1in
\noindent It remains to prove the claim resulting in \eqref{inclusion-kj}. We observe that 
\[ \bigcup_k O_k(u) = \bigl[ Z(P_u) \cap V(\mathbb R) \bigr] \setminus Z(P) =  \bigcup_j \bigl[ Z(P_u) \cap O_j \bigr].\]
Hence for any index $j$ such that $Z(P_u) \cap O_j \ne \emptyset$, there must exist some index $k$ such that $Z(P_u) \cap O_j \cap O_k(u) \ne \emptyset$. We intend to show that \eqref{inclusion-kj} holds for this pair $(k, j)$. For this, we note that $O_k(u)$ can be written as a disjoint union,
\[ O_k(u) = \bigcup_{j'} \bigl[ Z(P_u) \cap O_{j'} \cap O_k(u) \bigr].\]
Each $O_{j'}$ is by definition both open and closed in $V(\mathbb R) \setminus Z(P)$; hence each of the sets $Z(P_u) \cap O_{j'} \cap O_k(u)$ is both open and closed in $O_k(u) \cap \bigl[(V(\mathbb R) \cap Z(P_u)) \setminus Z(P) \bigr] = O_k(u)$. Since $O_k(u)$ is connected, this implies that $Z(P_u) \cap O_{j'} \cap O_k(u)$ can be nonempty for only one of the indices $j'$, namely for $j' = j$. Thus $O_k(u) = Z(P_u) \cap O_{j} \cap O_k(u)$, which is equivalent to the claimed relation \eqref{inclusion-kj}.  
\end{proof}

\subsection{Proof of Theorem \ref{thm:higherdim}} \label{section:proof:higherdim}

We define 
\[ \mathfrak C_{\text{alg}}(N; m,n,d) := \sup \bigl\{ ||H_{\Omega}||_{2 \rightarrow 2} : \Omega \subseteq V(\mathbb R),  V \in \mathcal V(m,n,d), \; \#(\Omega) \leq N \bigr\} \] 
and aim show that
\begin{equation}\label{eq:goal_reformulate}
\mathfrak C_{\text{alg}}(N;m,n, d) \leq 
\begin{cases}
 d &\text{ when } m=0, \\ 
 A_{\epsilon}(m, d) N^{\frac{m-1}{2m} +\epsilon} &\text{ when } 1\leq m \leq n.
\end{cases}
\end{equation}
As in the proof of Theorem \ref{thm:3d}, we will establish the relation \eqref{eq:goal_reformulate} by induction on $(m, N)$ using the lexicographic ordering, with $n$ and $d$ fixed. 
\vskip0.1in 
\noindent The initializing step of the induction corresponds to $m=0$. By definition, the cardinality of any zero-dimensional variety equals its degree. 
Therefore, if $\Omega \subset V \in \mathcal V(0, n, d)$ is a finite set, then by the trivial estimate we have,
\[ ||H_{\Omega}||_{2 \rightarrow 2} \leq \#(\Omega) \leq \#(V(\mathbb R)) \leq \#(V) = d.  \] 
This establishes \eqref{eq:goal_reformulate}, as required. 
\vskip0.1in 
\noindent We continue to the inductive step. Let us fix $n \geq 2$, $1 \leq m \leq n$ and an arbitrary $0 < \epsilon < 1$. Suppose that \eqref{eq:goal_reformulate} has been established for $\mathfrak C_{\text{alg}}(N'; m', n, d)$ for all $(m', N') < (m, N)$ and for all $d$. We will prove \eqref{eq:goal_reformulate} for $\mathfrak C_{\text{alg}}(N; m, n, d)$, for a sufficiently large absolute constant $A_{\epsilon}(m,d)$.  Accordingly, we choose $V \in \mathcal V(m, n, d)$ of dimension $m$, a direction set $\Omega \subseteq V$ with $\#(\Omega) = N$, and aim to show that 
\begin{equation}  ||H_{\Omega}||_{2 \rightarrow 2} \leq A_{\epsilon}(m, d) N^{\frac{m-1}{2m} + \epsilon}.  \label{aim-to-show-eq:goal}\end{equation}
\vskip0.1in
\noindent We first classify the irreducible components of $V$ according to their respective dimensions, and write $V = U_{m} \cup V_m$, where each irreducible component of $V_{m}$ (respectively $U_{m}$) is of dimension $m$ (respectively $<m$). Intersecting both sides of this relation with $\mathbb R^n$ and then with $\Omega$ results in the following decompositions:
\begin{align*}  
V(\mathbb R) &=   V_{m}(\mathbb R) \cup U_{m}(\mathbb R) , \qquad \Omega = \Omega(V_m) \cup \Omega(U_m) , \text{ where } \\ 
\Omega(V_m)  &= \Omega \cap V_m(\mathbb R), \qquad \Omega(U_m)  = \Omega \cap U_m(\mathbb R).  
\end{align*}    
As a result, 
\begin{align} 
||H_{\Omega}||_{2 \rightarrow 2} &\leq  ||H_{\Omega(V_m)}||_{2 \rightarrow 2} +  ||H_{\Omega(U_m)}||_{2 \rightarrow 2} \nonumber \\ &\leq ||H_{\Omega(V_m)}||_{2 \rightarrow 2} + \mathfrak C_{\text{alg}}(N; m-1, n,d)  \nonumber \\ &\leq ||H_{\Omega(V_m)}||_{2 \rightarrow 2} + \begin{cases} d &\text{ if } m = 1, \\ A_{\epsilon}(m-1,  d) N^{\frac{m-2}{2(m-1)} + \epsilon} &\text{ if } 2 \leq m \leq n,  \end{cases} \nonumber \\ 
&\leq  ||H_{\Omega(V_m)}||_{2 \rightarrow 2} + \frac{1}{2} A_{\epsilon}(m, d) N^{\frac{m-1}{2m} + \epsilon}. \label{remove-Um}
\end{align} 
At the third step above, we have applied the induction hypothesis on $\Omega(U_m)$, with $(m', N') = (m-1, N)$. Since the exponent function $m \mapsto (m-1)/(2m)$ is increasing with $m$, the last step follows if we choose 
\begin{equation*} 
\frac{A_{\epsilon}(m, d)}{2} > \begin{cases} d &\text{ if } m =1, \\ A_{\epsilon}(m-1, d) &\text{ if } 2 \leq m \leq n. \end{cases}  
\end{equation*} 
In view of \eqref{remove-Um}, the desired estimate \eqref{aim-to-show-eq:goal} will follow from
\begin{equation} \label{aim-to-show-eq:goal2} 
||H_{\Omega(V_m)}||_{2 \rightarrow 2} \leq \frac{1}{2} A_{\epsilon}(m, d) N^{\frac{m-1}{2m} + \epsilon}.
\end{equation}  
We set about proving this.
\vskip0.1in
\noindent Let $D=D_{\epsilon,m,n,d}$ be a large integer to be specified shortly (in inequalities \eqref{D1} and \eqref{D2} below). By Theorem \ref{thm:polyvar}, there exists a polynomial of degree at most $D$ such that $P$ does not vanish identically on any irreducible component of $V_m$ and each connected component of $V_m(\mathbb R) \setminus Z(P)$ contains at most $C_m ND^{-m}$ elements of $\Omega(V_m)$. As in the proof of Theorem \ref{thm:3d}, this results in a decomposition of $\Omega(V_m)$, and the corresponding operator:
\begin{align*} 
\Omega(V_m) &= \Omega^{\ast}(V_m) \cup \Omega^{\ast \ast}(V_m),\text{ where } \nonumber \\  \Omega^{\ast}(V_m) &= \Omega(V_m)  \setminus Z(P), \text{ and }  \; \Omega^{\ast \ast}(V_m) = \Omega(V_m) \cap Z(P). 
\end{align*} 
In order to prove \eqref{aim-to-show-eq:goal2}, it therefore suffices to establish the following two inequalities: 
\begin{align} \label{aim-to-show-eq:goal3} 
||H_{\Omega^{\ast}(V_m)}||_{2 \rightarrow 2} &\leq \frac{1}{4} A_{\epsilon}(m, d) N^{\frac{m-1}{2m} + \epsilon}, \\ 
\label{aim-to-show-eq:goal4}
||H_{\Omega^{\ast \ast}(V_m)}||_{2 \rightarrow 2} &\leq \frac{1}{4} A_{\epsilon}(m,d) N^{\frac{m-1}{2m} + \epsilon}, 
\end{align} 
\vskip0.1in
\noindent We start with \eqref{aim-to-show-eq:goal3}, namely the contribution from $\Omega^{\ast}(V_m)$. The key here is once again Theorem \ref{thm:ortho}. As preparation for Theorem \ref{thm:ortho}, let $\{O_j\}$ be the collection of connected components of $V(\mathbb R) \setminus Z(P)$, and let $\mathcal O$ denote the collection of points obtained by selecting a single point $v_j \in O_j$ for each $j$. Then 
\[ \Omega^{\ast}(V_m) = \bigcup_j \Omega_j(V_m) \quad \text{ where } \quad \Omega_j(V_m) := \Omega(V_m) \cap O_j. \] 
We estimate the contribution of $\mathcal O$ by the trivial bound: 
\begin{align} 
||H_{\mathcal O}||_{2 \rightarrow 2} &\leq \#(\mathcal O) = \text{ number of  connected components of  $V(\mathbb R) \setminus Z(P)$} \nonumber \\ 
&\leq  R_{n,d} D^m. \label{HO} 
\end{align}  
The last inequality is a consequence of Theorem \ref{thm:ST}. We also observe that each $\Omega_j(V_m)$ contains at most $C_m N D^{-m}$ elements, per Theorem \ref{thm:polyvar}. Choosing $D$ large enough so that 
\begin{equation} \label{D1}
C_m D^{-m} < 1/2
\end{equation} 
allows us to apply the induction hypothesis on $(m', N') = (m, C_m N D^{-m})$, resulting in the estimate
\begin{align} 
||H_{\Omega_j}||_{2 \rightarrow 2} &\leq \mathfrak C_{\text{alg}}(C_m N D^{-n}; m, n, d) \nonumber \\
&\leq A_{\epsilon}(m, d) \bigl( C_m N D^{-m} \bigr)^{\frac{m-1}{2m} + \epsilon}. \label{H-Omj-higherdim}
\end{align}  
By Proposition \ref{lem:boundE}, we also have that \begin{equation} \label{Ebound-higherdim} ||E||_{L^{\infty}(\mathbb S^n)} \leq R_{n,d} D^{m-1}, \end{equation}  where $E(u)$ denotes the number of components $O_j$ intersected by $Z(P_u)$. Substituting \eqref{HO}, \eqref{H-Omj-higherdim} and \eqref{Ebound-higherdim} into \eqref{almost-ortho} yields 
\begin{align*}  
||H_{\Omega^{\ast}(V_m)}||_{2 \rightarrow 2} &\leq ||H_{\mathcal O}||_{2 \rightarrow 2} + ||E||_{\infty}^{\frac{1}{2}} \bigl(\sup_j ||H_{\Omega_j}||_{2 \rightarrow 2} + 1 \bigr) \\ &\leq R_{n, d} D^m + 2 \bigl( R_{n, d} D^{m-1} \bigr)^{\frac{1}{2}} A_{\epsilon}(m, d)  \bigl(C_m N D^{-m}\bigr)^{\frac{m-1}{2m} + \epsilon} \nonumber \\ 
&\leq R_{n, d} D^m + \bigl[ 2 R_{n, d}^{\frac{1}{2}} C_m^{\frac{m-1}{2m}+\epsilon} D^{-m \epsilon} \bigr] A_{\epsilon}(m, d) N^{\frac{m-1}{2m} + \epsilon} \\ 
&\leq \frac{1}{8} A_{\epsilon}(m, d) +  \frac{1}{8} A_{\epsilon}(m, d)  N^{\frac{m-1}{2m} + \epsilon} \leq \frac{1}{4} A_{\epsilon}(m, d)  N^{\frac{m-1}{2m} + \epsilon}. 
\end{align*}  
At the penultimate step above, we have first chosen $D$ large enough to satisfy 
\begin{equation}  2 R_{n, d}^{\frac{1}{2}} C_m^{\frac{m-1}{2m}+\epsilon} D^{-m \epsilon} < \frac{1}{8}, \label{D2} 
\end{equation} 
and then chosen $A_{\epsilon}(m, d)$ sufficiently large so that 
\[  A_{\epsilon}(m, d) > 8 R_{n, d} D^m.  \]
This completes the proof of \eqref{aim-to-show-eq:goal3}.
\vskip0.1in
\noindent Finally, we turn to the proof of \eqref{aim-to-show-eq:goal4}, which specifies the contribution from $\Omega^{\ast \ast}(V_m) $. This set is a finite subset of cardinality at most $N$ of $V_m(\mathbb R) \cap Z(P) = \bigl[ V_m \cap Z_{\mathbb C}(P)\bigr] \cap \bigl[ \mathbb R^n + i \{ \vec{0} \} \bigr]$. The choice of the partitioning polynomial $P$ from Theorem \ref{thm:polyvar} ensures that $P$ does not vanish identically on any irreducible component of $V_m$, hence by Lemma \ref{lem:dim-intersection}, 
\[ \text{dim}(V_m \cap Z_{\mathbb C}(P)) < \text{dim}(V_m) = m.  \]
This sets the stage for induction based on the dimension $m$. However, we also need a bound on the degree of $V_m \cap Z_{\mathbb C}(P)$, in order to keep track of the implicit constants. By the generalized Bezout's theorem (Lemma \ref{gen-Bezout} and \eqref{eqn:Bezout}), 
\[ \deg (V_m \cap Z_{\mathbb C}(P)) \leq \deg (V_m) \deg(P) \leq \deg(V) \deg(P) \leq d D. \] 

The induction hypothesis with $(m', N') = (m-1, N)$ yields 
\begin{align*} 
\bigl|\bigl| H_{\Omega^{\ast \ast}(V_m)}\bigr| \bigr|_{2 \rightarrow 2} &\leq \mathfrak C_{\text{alg}}(N; m-1, n, dD) \\ 
&\leq \begin{cases}  dD &\text{ if  } m = 1, \\ A_{\epsilon}(m-1, dD) N^{\frac{m-2}{2(m-1)} + \epsilon} &\text{ if } 2 \leq m \leq n, \end{cases} \\ 
&\leq \frac{1}{8} A_{\epsilon}(m,  d) N^{\frac{m-1}{2m} + \epsilon}. 
\end{align*}     
The last step follows by choosing $A_{\epsilon}(m,  d)$ large enough, namely 
\[ \frac{A_{\epsilon}(m, d)}{8} > \begin{cases} dD &\text{ if  } m = 1, \\ A_{\epsilon}(m-1, dD) &\text{ if } 2 \leq m \leq n, \end{cases}  \] 
recalling that $D$ depends only on $\epsilon, m,n, d$.
This completes the proof of \eqref{aim-to-show-eq:goal4}, and hence the proof of Theorem \ref{thm:higherdim}. \qed

\appendix
\section{Auxiliary lemmas} \label{sec:appendix}

\subsection{Basic properties of the operator norm of $H_{\om}$}
\begin{lem}[Invariance of operator norm under translation and dilation] \label{lem:modify} Let $\om\subset \R^n$. For $c \in (\R_+)^n$ and $w\in \R^n$, define $c\om = \{ (c_1 v_1, \cdots, c_nv_n)\in \R^n: v\in \om \}$ and $\om+w = \{ v+w : v\in \om \}$. Then 
\[ \norm{H_{\om}} \op = \norm{H_{c\om+w}} \op.\]
\end{lem}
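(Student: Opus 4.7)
The plan is to exhibit, for each transformation of the direction set (dilation by $c$ and translation by $w$), a measure-preserving (up to a known Jacobian) affine change of variables on $\R^{n+1}$ that intertwines the operators $H_v$ and $H_{cv}$, respectively $H_v$ and $H_{v+w}$. Taking suprema in $v$ then transfers the intertwining from single directions to the maximal operator, and the operator norms match because the substitution rescales the $L^2$ norms of input and output by the same factor.

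For the dilation, I would define the diagonal linear map $A_c : \R^{n+1} \to \R^{n+1}$ by $A_c(x_1, \ldots, x_n, x_{n+1}) = (c_1 x_1, \ldots, c_n x_n, x_{n+1})$, so that $A_c \langle v, 1 \rangle = \langle cv, 1 \rangle$. A direct substitution in the defining integral \eqref{DHT-single} gives the pointwise identity
\[ H_{cv}(f \circ A_c^{-1})(x) = (H_v f) \circ A_c^{-1}(x), \qquad v \in \R^n, \; x \in \R^{n+1}. \]
Taking the absolute value and then the supremum over $v \in \om$ upgrades this to $H_{c\om}(f \circ A_c^{-1}) = (H_\om f) \circ A_c^{-1}$. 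Since $A_c^{-1}$ has constant Jacobian $(c_1 \cdots c_n)^{-1}$, the two $L^2$ norms scale by the same factor, which yields $\norm{H_{c\om}}\op = \norm{H_\om}\op$.

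For the translation, the correct companion map is the shear $B_w : \R^{n+1} \to \R^{n+1}$ given by $B_w(x_1, \ldots, x_n, x_{n+1}) = (x_1 + w_1 x_{n+1}, \ldots, x_n + w_n x_{n+1}, x_{n+1})$. It has Jacobian $1$, hence preserves the $L^2$ norm, and satisfies $B_w \langle v, 1 \rangle = \langle v + w, 1 \rangle$. The same computation as above yields $H_{v+w}(f \circ B_w^{-1}) = (H_v f) \circ B_w^{-1}$, and taking the pointwise supremum over $v$ together with the fact that $B_w^{-1}$ preserves the $L^2$ norm gives $\norm{H_{\om + w}}\op = \norm{H_\om}\op$.

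Composing the two steps (first translate, then dilate, or in either order) yields the desired equality $\norm{H_\om}\op = \norm{H_{c \om + w}}\op$. There is no real obstacle here; the only point that requires care is verifying the intertwining identity for a single direction, which reduces to the elementary identity $A \langle v, 1 \rangle = \langle v', 1 \rangle$ for the appropriate linear map $A \in \{A_c, B_w\}$ and the appropriate transformed direction $v' \in \{cv, v+w\}$.
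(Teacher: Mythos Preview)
Your proposal is correct and follows essentially the same approach as the paper's proof: the paper also reduces to the two separate identities for dilation and translation, and for the translation case it uses exactly your shear $B_w$ (written there as $g(y',y_{n+1}) = f(y'+wy_{n+1},y_{n+1})$, i.e.\ $g = f\circ B_w$) to obtain the intertwining relation, while leaving the dilation case to the reader. Your write-up is in fact slightly more complete, since you carry out the dilation step explicitly and frame both cases uniformly via the identity $A\langle v,1\rangle = \langle v',1\rangle$.
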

\begin{proof}[Sketch of proof]
It suffices to show that 
\[ \norm{H_{\om}} \op = \norm{H_{c \om}} \op  \;\text{ and } \; \norm{H_{\om}} \op = \norm{H_{\om+w}} \op.\] We only prove the second equality, leaving the verification of the first one to the interested reader. For a given $f\in L^2(\R^{n+1})$, let $g(y',y_{n+1}) = f(y'+w y_{n+1}, y_{n+1})$. Observe that 
\[ [H_{v+w} f](x',x_{n+1}) = [H_{v} g](x'-wx_{n+1},x_{n+1}).\]
By taking sup over $v\in \om$ and then $L^2$ norm, we find that $\norm{H_{\om+w} f}_{L^2} = \norm{H_{\om} g}_{L^2} $. Since $\norm{f}_{L^2} = \norm{g}_{L^2}$, this implies $\norm{H_{\om +w}}\op \leq \norm{H_{\om}} \op$. The reverse inequality can be shown similarly.
\end{proof}

\begin{lem} \label{lem:slice} For $\om \subset \R^n$ and $w \in \R^l$, let ${\om}_w = \om \times \{ w \}$. Then
\[ \norm{H_\om} \opn{n+1} = \norm{H_{\om_w}} \opn{n+l+1}. \]
\end{lem}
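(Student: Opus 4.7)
The plan is to reduce to the case $w=\vec{0}$ and then exploit the fact that $H_{(v,\vec{0})}$ on $\R^{n+l+1}$ acts trivially on the middle $l$ coordinates. First, I would apply Lemma \ref{lem:modify} to the translation $(v,w) = (v,\vec{0}) + (\vec{0},w)$ in $\R^{n+l}$; since $\om_w = \om_{\vec{0}} + (\vec{0},w)$, this gives $\norm{H_{\om_w}}\opn{n+l+1} = \norm{H_{\om_{\vec{0}}}}\opn{n+l+1}$, so the task reduces to proving the equality for $w=\vec{0}$.

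With $w=\vec{0}$, I would write points in $\R^{n+l+1}$ as $(x',y,x_{n+l+1})$ with $x' \in \R^n$, $y\in\R^l$, $x_{n+l+1}\in\R$. For $v\in\om$, the direction vector is $\inn{v,\vec{0},1}$, so the defining integral in \eqref{DHT-single} becomes
\[ H_{(v,\vec{0})} f (x',y,x_{n+l+1}) = \text{p.v.} \frac{1}{\pi} \int_{\R} f(x'-tv, \, y, \, x_{n+l+1}-t)\,\frac{dt}{t}, \]
which for each fixed $y$ is simply $H_v$ acting on the slice function $f_y(x',x_{n+l+1}) := f(x',y,x_{n+l+1})$. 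Taking the supremum over $v\in\om$ and using that the supremum commutes with fixing $y$, I obtain the pointwise identity $H_{\om_{\vec{0}}} f(x',y,x_{n+l+1}) = [H_\om f_y](x',x_{n+l+1})$.

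The upper bound $\norm{H_{\om_{\vec{0}}}}\opn{n+l+1} \leq \norm{H_\om}\opn{n+1}$ then follows immediately from Fubini, since
\[ \int_{\R^{n+l+1}} |H_{\om_{\vec{0}}} f|^2 \,dx = \int_{\R^l} \norm{H_\om f_y}_{L^2(\R^{n+1})}^2 \,dy \leq \norm{H_\om}\opn{n+1}^2 \int_{\R^l} \norm{f_y}_{L^2(\R^{n+1})}^2\,dy, \]
which equals $\norm{H_\om}\opn{n+1}^2 \norm{f}_{L^2(\R^{n+l+1})}^2$. For the reverse inequality, I would test with tensor-product functions: given $g\in L^2(\R^{n+1})$ and any $\phi\in L^2(\R^l)$ with $\norm{\phi}_{L^2}=1$, set $f(x',y,x_{n+l+1}) := g(x',x_{n+l+1})\phi(y)$. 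Because $\phi(y)$ is a scalar independent of $v$, it factors out of both the principal value integral and the supremum, yielding $H_{\om_{\vec{0}}} f(x',y,x_{n+l+1}) = |\phi(y)|\, H_\om g(x',x_{n+l+1})$. Computing $L^2$ norms and using $\norm{\phi}_{L^2}=1$ gives $\norm{H_{\om_{\vec{0}}} f}_{L^2}/\norm{f}_{L^2} = \norm{H_\om g}_{L^2}/\norm{g}_{L^2}$, and taking the supremum over $g$ completes the proof.

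There is no serious obstacle here; the only mild subtlety worth stating carefully is that $H_\om$ involves a supremum of $|H_v f|$, and one must note that the absolute value of the scalar $\phi(y)$ commutes with the supremum over $v$, which is why the tensor-product factorization survives the definition \eqref{def-MDHT}.
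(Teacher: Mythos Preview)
Your proof is correct and follows essentially the same approach as the paper: reduce to $w=\vec{0}$ via Lemma \ref{lem:modify}, use the slicing identity together with Fubini for the inequality $\norm{H_{\om_{\vec{0}}}}\opn{n+l+1}\leq \norm{H_\om}\opn{n+1}$, and test on tensor products $g\otimes\phi$ with $\norm{\phi}_{L^2}=1$ for the reverse inequality.
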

\begin{proof}
Since $\om_w = \om \times \{0\} + \{0\} \times \{ w \}$, by Lemma \ref{lem:modify} we may assume that $w=0\in \R^l$.

For a given $v\in V$, we write $\tilde{v} = (v,0,\ldots,0) \in \om_0$. For $g \in L^2(\R^{n+l+1})$, there is the identity
\[ H_{\tilde{v}} g(x) = H_{v} [g_{x_{n+1},\ldots,x_{n+l}}] (x_1,\ldots,x_n,x_{n+l+1}), \]
where we write $g(x) = g_{x_{n+1},\ldots,x_{n+l}} (x_1,\ldots,x_n,x_{n+l+1}  )$. This yields \[ \norm{H_{\om_0}} \opn{n+l+1} \leq \norm{H_\om} \opn{n+1}. \]

For the opposite inequality, let $f \in L^2(\R^{n+1})$. Define 
\[ \tilde{f}(x) = f(x_1,\ldots,x_n,x_{n+l+1}) \chi(x_{n+1},\ldots,x_{n+l}) \]
for a fixed function $\norm{\chi}_{L^2(\R^l)} = 1$. Observe that 
\[ H_{\tilde{v}} \tilde{f}(x) = \chi(x_{n+1},\ldots,x_{n+l})H_{v} f (x_1,\ldots,x_n,x_{n+l+1}) . \] 
This yields the reverse inequality.
\end{proof}

\subsection{Algebraic facts needed in Section \ref{proof:thm3d:section}}
In the proof of Proposition \ref{lem:3d} in Section \ref{proof:thm3d:section}, we appealed to a structure theorem for bivariate polynomials. The goal of this section is to prove this result, which has been stated in Lemma \ref{lem:decomposition} below. The proof relies on an estimate due to Basu, Pollack and Roy \cite{Basu-Pollack-Roy} on the number of ``cells'' or connected components generated by the zero set of a family of polynomials in an algebraic variety. This result has been refined further in subsequent work \cite{BB}, but the following version suffices for our purposes.     
\begin{theorem} \cite[Theorem 1]{Basu-Pollack-Roy} \label{thm:BPR}
Let $W \subseteq \mathbb R^n$ be an algebraic variety of real dimension $m$, defined as the common zero set of real polynomials of degree at most $d$. Let $\mathcal Q = \{ Q_1, \cdots, Q_s \}$ be a family of real polynomials of degree at most $d$. Then the total number of (semi-algebraically) connected components of $W \setminus  Z_{\mathbb R}(Q_1, \cdots, Q_s)$ is at most $O(d^n)$, where the implicit constant may depend only on $s,n$. 
\end{theorem}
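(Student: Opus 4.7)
The plan is to follow the classical Morse-theoretic framework of Oleinik--Petrovsky, Thom and Milnor, refined in \cite{Basu-Pollack-Roy} for sign-condition cells on arbitrary real algebraic varieties. Since Theorem~\ref{thm:BPR} is invoked verbatim from that paper, I outline how the argument would proceed rather than reproducing it in full.

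First I would reduce to the case $s=1$. Since $Z_{\mathbb R}(Q_1,\ldots,Q_s) = Z_{\mathbb R}(Q_1^2+\cdots+Q_s^2)$, one may replace the family by the single polynomial $Q := \sum_i Q_i^2$ of degree at most $2d$. The set being removed, and hence the count of connected components of its complement in $W$, is preserved, and the extra factor of $2$ in the degree is absorbed into the implicit constant (which is permitted to depend on $s$ and $n$). It therefore suffices to bound the number of connected components of $W \setminus Z_{\mathbb R}(Q)$ for a single polynomial $Q$ of degree $O(d)$.

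Next I would apply the standard infinitesimal deformation argument. Each connected component of $W \setminus Z_{\mathbb R}(Q)$ lies either in $\{Q>0\}$ or in $\{Q<0\}$. Passing to a non-Archimedean real-closed extension $\mathbb R\langle\varepsilon\rangle$ and choosing an infinitesimal $\varepsilon > 0$, every connected component of $W \cap \{Q>0\}$ is in bijection with a component of the closed set $W \cap \{Q \ge \varepsilon\}$, and similarly for $\{Q<0\}$. For generic $\varepsilon$ the boundary variety $W \cap Z_{\mathbb R}(Q-\varepsilon)$ is smooth. Intersecting with a closed ball $\bar B(0,R)$ of infinite radius $R$ then compactifies the picture without affecting the combinatorics of components. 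On the resulting bounded smooth region a generic linear function restricts to a Morse function whose critical points can be counted by applying B\'ezout's theorem to the Lagrange-multiplier Jacobian system, a polynomial system of degree $O(d)$ in $n$ variables. Since each connected component of the compactified region must contain at least one such critical point (e.g.\ a minimizer of the Morse function), the total critical-point count, and hence the component count, is $O(d^n)$, with the implicit constant depending only on $s,n$. A final application of the Tarski--Seidenberg transfer principle pushes the bound back to $\mathbb R$.

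The \emph{main obstacle} is the careful management of infinitesimals together with the Tarski--Seidenberg transfer; this constitutes the technical heart of \cite{Basu-Pollack-Roy}, and I would not attempt to reprove those foundational results within the present paper. Given that the estimate is classical and established in full quantitative form in the cited work, the natural course here is simply to invoke the cited theorem, as the authors do, and to apply it to the setting of Proposition~\ref{lem:3d} via Lemma~\ref{lem:decomposition}.
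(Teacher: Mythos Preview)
The paper does not prove Theorem~\ref{thm:BPR}; it is quoted verbatim from \cite{Basu-Pollack-Roy} and used as a black box in the proof of Lemma~\ref{lem:decomposition}. Your proposal correctly recognizes this, and the Morse-theoretic sketch you give (reduction to a single polynomial via sum of squares, infinitesimal perturbation to a smooth level set, critical-point count via B\'ezout on the Lagrange system) is an accurate high-level outline of the argument in the cited source, so there is nothing to compare against in the present paper.
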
 
\begin{lem}\label{lem:decomposition}
Let $P$ be a bivariate polynomial of degree $d$ that is not identically zero. Then, possibly after an affine change of coordinates, we may write $Z(P) = Z_{\mathbb R}(P) = \{(x, y) \in \mathbb R^2 : P(x, y) = 0 \}$ as the disjoint union of $O(d^2)$ points and $O(d^2)$ curves, where each curve is given by a graph of the form $\{(x,g(x)): x\in I \}$ for some continuous function $g$ and some interval $I\subset \R$. The constant implicit in the big oh notation $O$ is absolute. 
\end{lem}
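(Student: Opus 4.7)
The plan is to split $Z(P)$ according to whether $\partial_y P$ vanishes, obtaining a small number of points as the vanishing locus and a collection of graphs as the non-vanishing locus. First I would replace $P$ by its square-free radical (still of degree at most $d$, with the same real zero set) and then perform a generic linear change of coordinates of the form $(x,y) \mapsto (x + \alpha y, y)$ to arrange that $P$ is square-free and no irreducible factor of $P$ depends only on $x$; the latter is possible because each irreducible factor of the original $P$ is constant along lines of direction $(\beta,1)$ for at most one $\beta$, so almost every $\alpha$ avoids these directions. After this reduction, $P$ and $\partial_y P$ share no common factor in $\C[x,y]$: a common irreducible factor would force $P$ either to have a repeated factor or to contain an irreducible factor independent of $y$, both of which are now excluded. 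Consequently, B\'ezout's theorem in $\C^2$ yields $|Z_\C(P) \cap Z_\C(\partial_y P)| \leq \deg(P) \deg(\partial_y P) \leq d(d-1)$, so the real intersection $Z(P) \cap Z(\partial_y P)$ has at most $O(d^2)$ points. These will form the zero-dimensional piece of the desired decomposition.

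For the one-dimensional piece, I would apply Theorem \ref{thm:BPR} with $n=2$, $s=1$, $W = Z(P)$ (defined by one polynomial of degree $\leq d$), and $\mathcal Q = \{\partial_y P\}$, to conclude that $Z(P) \setminus Z(\partial_y P)$ has at most $O(d^2)$ semi-algebraic connected components, with an absolute implied constant. The main remaining task is to show that each connected component $C$ of $Z(P) \setminus Z(\partial_y P)$ is the graph of a continuous function over an interval. Since $\partial_y P$ is non-vanishing on $C$, the implicit function theorem furnishes a local representation $y = g(x)$ at each point of $C$, so $C$ is a smooth connected one-dimensional submanifold of $\R^2$ on which the tangent is nowhere vertical. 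Being semi-algebraic and one-dimensional, $C$ is homeomorphic to an open interval, and the absence of vertical tangents forces the $x$-coordinate to be strictly monotonic along $C$. Hence the projection $\pi_x \colon C \to \R$ onto the first coordinate is a homeomorphism onto its image, that image is an interval $I \subseteq \R$, and the continuous inverse $g \colon I \to \R$ satisfies $C = \{(x, g(x)) : x \in I\}$, as required.

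Combining the two steps yields the disjoint decomposition $Z(P) = [Z(P) \cap Z(\partial_y P)] \sqcup [Z(P) \setminus Z(\partial_y P)]$ as a union of at most $d(d-1)$ points and $O(d^2)$ graph curves, proving the lemma. The main obstacle I anticipate is in justifying the global graph structure of each connected component of $Z(P) \setminus Z(\partial_y P)$: one must argue that such a component is truly a single smooth curve on which the $x$-coordinate is monotonic, rather than a more intricate one-dimensional semi-algebraic configuration involving loops or branching. This is resolved by combining the smooth one-manifold structure provided by the implicit function theorem with the standard fact that a semi-algebraic connected one-manifold is homeomorphic to an interval, so that the local monotonicity of $x$ forced by the non-vanishing of $\partial_y P$ extends to a global monotonicity on $C$.
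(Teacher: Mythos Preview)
Your proposal is correct and follows essentially the same route as the paper: reduce to square-free $P$, normalize coordinates so that $P$ and $\partial_y P$ share no factor, use B\'ezout for the finite part $Z(P)\cap Z(\partial_y P)$, and use Theorem~\ref{thm:BPR} together with the implicit function theorem for the graph part $Z(P)\setminus Z(\partial_y P)$. One small correction: a connected semi-algebraic one-manifold can also be a circle, not just an interval; what rules out the circle here is precisely the non-vanishing of $\partial_y P$ (a smooth map $x\colon S^1\to\R$ must have a critical point), so your monotonicity argument is what actually forces $C$ to be an interval, not the semi-algebraic structure alone.
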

\begin{proof}
Without loss of generality, by an affine linear transformation if necessary, we may take a bivariate polynomial $P$ of degree $d$ to be of the form \begin{equation}  P(x, y) = y^d + \sum_{j=1}^{d} a_j(x) y^{d-j} \text{ where $a_j$ are univariate polynomials.} \label{monic-form} \end{equation} We may also assume that $P$ is square-free, since the presence of repeated factors leaves $Z(P)$ invariant; in other words, $P$ admits a unique factorization into distinct, irreducible polynomials, $P = P_1 \cdots P_m$, where each $P_j$ is of the form \eqref{monic-form}. Irreducibility implies that (a) $P_j$ and $\partial_y P_j$ do not share a common factor for any $j$, and (b) $P_j$ and $P_k$ do not share a common factor for any choice of $j \ne k$. It follows then (by induction on $m$ for example) that $P$ and $\partial_y P$ do not share a common factor either. Here $\partial_y P = \frac{\partial P}{\partial y}$.
\vskip0.1in
\noindent We now decompose
\[ Z(P) = \bigl[ Z(P, \partial_y P)\bigr] \bigsqcup \bigl[Z(P)\setminus Z(\partial_y P)]. \]
We have shown in the previous paragraph that the polynomials $P$ and $\partial_y P$ have no common factors, hence by B\'{e}zout's theorem \cite[Theorem 2.7]{Sheffer}, we know that $Z(P, \partial_y P)$ is a finite set of cardinality at most $\deg(P) \deg(\partial_y P)  \leq d(d-1) = O(d^2)$. On the other hand, by the implicit function theorem, each connected component of the remainder $Z(P)\setminus Z(\partial_y P)$ can be expressed as a graph of the form $\{ (x,g(x)): x\in I \}$ for some function $g$ and some interval $I\subset \R$. By Theorem \ref{thm:BPR} with $W = Z(P)$, $\mathcal Q = \{ \partial_y P\}$, $n=2$ and $s = 1$, we know that $Z(P)\setminus Z(\partial_y P)$ has $O(d^2)$ connected components. This completes the proof. 
\end{proof} 
\subsection{Algebraic facts needed in Section \ref{proof-thm-highd-section}}
Let us recall that for $u \in \mathbb S^n$, $P_u: \mathbb R^n \rightarrow \mathbb R$ denotes the function $P_{u}(y) = u\cdot \inn{y,1}$. In the proof of Proposition \ref{lem:boundE}, we made use of the following lemma. 
\begin{lem}  \label{lem:dim} Let $V$ be a variety in $\C^n$ of dimension $\geq 1$. Then
\[ \dim (Z_{\C}(P_u)\cap V) < \dim V \] for almost every $u\in \mathbb S^n$.
\end{lem}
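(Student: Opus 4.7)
My plan is to reduce the statement to Lemma \ref{lem:dim-intersection}, which requires showing that for almost every $u \in \mathbb S^n$, the affine linear polynomial $P_u$ does not vanish identically on any irreducible component of $V$. After decomposing $V$ into finitely many irreducible components $V = V_1 \cup \cdots \cup V_k$, the task reduces to showing that for each $i$, the set of ``bad'' directions
\[ L_i := \{ u \in \R^{n+1} : P_u \equiv 0 \text{ on } V_i \} \]
has measure zero in $\mathbb S^n$.

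The key observation is that $L_i$ is a real linear subspace of $\R^{n+1}$: for each fixed $y \in V_i$, the evaluation map $u \mapsto P_u(y) = u \cdot \langle y, 1 \rangle$ is $\R$-linear from $\R^{n+1}$ to $\C$, so $L_i$ is the intersection of the kernels of such $\R$-linear maps indexed over $y \in V_i$. To see that $L_i$ is a \emph{proper} subspace, I would exhibit a single vector $u \in \R^{n+1}$ for which $P_u$ is not identically zero on $V_i$; the uniform choice $u = (0, \ldots, 0, 1)$ gives $P_u \equiv 1$ on $\R^n$, which works for every $i$ regardless of the geometry of $V_i$.

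Once each $L_i$ is known to be a proper real linear subspace of $\R^{n+1}$, its intersection with $\mathbb S^n$ has $n$-dimensional surface measure zero. Since there are only finitely many irreducible components, the exceptional set $\bigcup_{i=1}^k (L_i \cap \mathbb S^n)$ still has measure zero. For any $u \in \mathbb S^n$ outside this union, $P_u$ does not vanish identically on any irreducible component of $V$, and Lemma \ref{lem:dim-intersection} applies directly with $Q = P_u$ to yield $\dim(Z_{\C}(P_u) \cap V) < \dim V$.

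The only mild subtlety is that $V_i$ is a complex variety while $u$ ranges over a real sphere; it is the $\R$-linearity (rather than $\C$-linearity) of the evaluation map $u \mapsto P_u(y)$ that makes the linear-algebraic reduction go through. Beyond this bookkeeping, no further computation is required.
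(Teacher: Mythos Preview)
Your proposal is correct and follows essentially the same strategy as the paper: both arguments show that the set of $u$ for which $P_u$ vanishes identically on an irreducible component of $V$ lies in a proper linear subspace of $\R^{n+1}$, hence has measure zero on $\mathbb S^n$, and then invoke the principal-ideal-theorem consequence (the paper unpacks it for irreducible $V$, you cite Lemma~\ref{lem:dim-intersection} directly). The only cosmetic difference is the witness for properness: the paper fixes a point $z\in V$ with real part $x$ and uses the hyperplane $\{u: u\cdot\langle x,1\rangle=0\}$, while you pick the direction $u=e_{n+1}$ and note $P_{e_{n+1}}\equiv 1$; both are equally short.
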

\begin{proof}
Since any variety $V$ is the unique and disjoint union of irreducible components, we may assume that $V$ is irreducible. When $\dim V = n$, we know that $V=\C^n$ and therefore $\dim (Z_{\C}(P_u) \cap V) = \dim Z_{\C}(P_u) = n-1 < \dim V$. 
\vskip0.1in
\noindent Suppose now that $1\leq \dim V < n$. If $\dim(Z_{\C}(P_u) \cap V) \geq \dim V$, then $Z_{\C}(P_u) \cap V = V$ by the definition of dimension. Therefore, it suffices to show that the Lebesgue surface area measure of the set  
\[ \mathbb S^n_V := \{u\in \mathbb S^n : V \subset Z_{\C}(P_u) \} \]
is 0. Fix a point $z=(z_1,\cdots,z_n) \in V \subset \C^n$ and let $x=(x_1,\cdots,x_n)\in \R^n$, where each $x_j = \text{Re}(z_j)$. Observe that $\mathbb S^n_V \subset \mathbb S^n_{\{z\}} \subset \mathbb S^n_{\{x\}}$ and that 
\[ \mathbb S^n_{\{x\}} = \{u\in \mathbb S^n : u \cdot \inn{x,1}  = 0 \} \]
is of Lebesgue measure 0 since it is the intersection of $\mathbb S^n$ with a hyperplane in $\R^{n+1}$ through the origin. Thus, $\mathbb S^n_V$ has measure 0.
\end{proof}

\nocite{Car, Str}

\bibliographystyle{amsplain}

\end{document}